\documentclass[ejsv2,preprint,noshowframe]{imsart}

\RequirePackage[authoryear]{natbib}
\RequirePackage[colorlinks,citecolor=blue,urlcolor=blue,linkcolor=blue]{hyperref}
\RequirePackage{graphicx}
\RequirePackage{amsmath}
\makeatletter
\@ifpackageloaded{newtxmath}{}{\RequirePackage{amsfonts}\RequirePackage{amssymb}}
\makeatother
\RequirePackage{booktabs}
\RequirePackage{alltt}
\RequirePackage{xcolor}

\startlocaldefs
\setpkgattr{figurename}{size}{\upshape\scshape}
\setpkgattr{tablename}{size}{\upshape\scshape}

\def\stitle@fmt#1{\noindent\textbf{#1\ }}
\def\supplement@name{Supplementary material}
\theoremstyle{plain}
\newtheorem{Theorem}{Theorem}
\newtheorem{Lemma}{Lemma}
\newtheorem{Conjecture}{Conjecture}
\endlocaldefs

\begin{document}
\nonfrenchspacing  %
\begin{frontmatter}
\title{On the median of the beta distribution in relation to the median of the gamma distribution}
\runtitle{The beta median and the gamma median}

\begin{aug}
\author[A]{\fnms{Richard F.}~\snm{Lyon}\ead[label=e1]{dicklyon@acm.org}}
\address[A]{Google Research, Mountain View, California, USA\printead[presep={,\ }]{e1}}
\runauthor{R. F. Lyon}
\end{aug}

\begin{abstract}
The median of the beta distribution has no closed form, but it is closely tied to the median of the
gamma distribution.  Kerman's approximation has the ratio form $g(\alpha)/(g(\alpha)+g(\beta))$ with
$g(x) = x - \frac13$, a lower bound on the gamma median, and it is a lower bound on the beta median for
$\frac13 < \alpha < \beta$.  The ratio form built from the true gamma median is a tighter lower bound,
asymptotically exact as $\beta \to \infty$.  The ratio form built from the Wilson--Hilferty approximation to
the gamma median, implicit in a classical approximation to the $F$ distribution but little known as an
approximation to the beta median, is an upper bound on the edge $\alpha = 1$ and for large shape
parameters.  The ratio form that is exact on the lines $\alpha = 1$ and $\beta = 1$ yields a new
approximation to the gamma median, which is an upper bound on it for shape parameter below 1 and a lower
bound above 1; no ratio form gives the beta median exactly.  For large shape parameters, the logit of the median
has an explicit second-order expansion.  For small shape parameters, bounds in elementary functions are asymptotically tight, and odds-corrected
forms of the low-$\alpha$ asymptote, with an elementary factor that is exact on the edges $\alpha = 1$ and
$\beta = 1$, are, numerically, one-sided bounds within $0.44\%$ of the median that are exact on the lines
$\alpha = 1$, $\beta = 1$, and $\alpha = \beta$.  Steering their exponents with four fitted constants, and
interpolating between two ratio forms where both shape parameters exceed 1, gives an approximation in
elementary functions that is exact on those lines and numerically within $0.065\%$ of the median over the
whole domain.
Most of the proofs are machine-verified in Lean~4.
\end{abstract}

\begin{keyword}[class=MSC]
\kwdgroup[type=primary]{\kwd{62E17}
\kwd{33B20}}
\kwdgroup[type=secondary]{\kwd{41A60}
\kwd{68V20}}
\end{keyword}

\begin{keyword}
\kwd{beta distribution}
\kwd{gamma distribution}
\kwd{median}
\kwd{approximation}
\kwd{bounds}
\kwd{formal verification}
\end{keyword}

\end{frontmatter}

\section{Introduction}\label{sec-intro}

The beta distribution has a long history in probability and statistics,
originating in the inverse probability problem studied by Thomas Bayes in his
1764 essay and later generalized by Pierre-Simon Laplace \citep{dale1999history}.
Today, it is widely used as a flexible model for random variables limited to
finite intervals.  Importantly, it serves as the standard conjugate prior for the
Bernoulli, binomial, and geometric distributions in Bayesian updating,
just as the gamma distribution is used as
conjugate prior for Poisson, exponential, gamma, and
normal distributions \citep{kerman2011neutral,orloff2022conjugate}.
In each case, the median of an updated posterior distribution of the same family
is often taken as a point estimator of a parameter, as it minimizes the expected
absolute error, and is thus more robust than the mean, which minimizes
the expected squared error \citep{minsker2014scalable,hartwig2020median}.
So the medians of these distributions are useful, even if not simple to calculate.

The beta distribution's probability density function (PDF), with shape
parameters $\alpha > 0$ and $\beta > 0$, for $0 < x < 1$ is:
\begin{equation}
p_{\textrm{B}}(x) = \frac{1}{B(\alpha, \beta)} x^{\alpha-1}(1 - x)^{\beta-1} \, ,
\end{equation}
where $B(\alpha, \beta)$ is the beta function.

The beta distribution's mean is well known to be
$\mu_{\textrm{B}}(\alpha, \beta) = \alpha/(\alpha + \beta)$.  The median
$\nu_{\textrm{B}}(\alpha, \beta)$, on the other hand, is implicitly defined as the value
of $x$ where the cumulative distribution function (CDF) equals one-half:
\begin{equation}
\frac{1}{2} = \int_{0}^{\nu_{\textrm{B}}(\alpha,\beta)} p_{\textrm{B}}(x)\,dx.
\end{equation}
The median is never very far from the mean, but there is no explicit formula for it.

While we can find the median using standard numerical methods, effectively
evaluating the inverse incomplete beta function, these approaches are
computational black boxes, offering no analytical insight into how the median
behaves as a function of its shape parameters.  Closed-form approximations and
analytical bounds provide a useful alternative view:  they
facilitate theoretical analysis and in some cases explicitly preserve
the symmetry
$\nu_{\textrm{B}}(\alpha, \beta) = 1 - \nu_{\textrm{B}}(\beta, \alpha)$.

A very simple and well-known closed-form approximation to the median of the
beta distribution, good for high enough values of the shape parameters,
is due to \citet{kerman2011closed}.  An older approximation of the same ratio form, built on the
Wilson--Hilferty cube-root transformation, is implicit in \citeauthor{paulson1942approximate}'s
(\citeyear{paulson1942approximate}) normal approximation to the $F$ distribution, and approximations to
percentage points of Fisher's $z$ distribution, such as \citeauthor{carter1947approximation}'s
(\citeyear{carter1947approximation}), also give the beta median, though they are designed for the tails.
At the other end of the range, for small
values of the shape parameters, asymptotic analysis by \citet{askitis2017asymptotic}
provides good approximations, but not in closed form, as these involve gamma functions.
Other than these, and classical inequalities relating the mean, median, and mode
\citep{groeneveld1977mode,payton1989bounds}, the literature seems to be largely
silent on the beta median.
The literature on the median of the gamma distribution, on the other hand,
is relatively richer, suggesting an ongoing interest in problems of this sort.

The need for tight analytical bounds and approximations is especially clear in the
extreme parameter regime where $\alpha$ and $\beta$ are less than 1.  While much of
the statistical literature focuses on the unimodal case ($\alpha, \beta > 1$),
U-shaped and J-shaped beta distributions are standard in several applied fields.
In population genetics, for example, the Wright--Fisher model relies on beta
approximations to describe the frequencies of rare alleles undergoing genetic
drift \citep{tataru2015inference}.  Similarly, in objective Bayesian inference,
updating a sparse, noninformative prior, such as the Beta$(\tfrac{1}{2}, \tfrac{1}{2})$ Jeffreys
prior, with highly skewed early data yields asymmetric posterior distributions
where the shape parameters may remain below 1 \citep{gelman2013bayesian}.
Because the probability density
diverges to infinity at the boundaries in these sub-unity cases, numerical
extraction of the median can become unstable, making closed-form approximations
desirable.

In this paper, we study the behavior of the beta distribution median by
examining its asymptotes, some of its special cases, and its relationship to the
gamma distribution median.  We first generalize and improve upon Kerman's
approximation, showing ways to formulate approximations that perform
well at high $\beta$ and near $\beta = 1$.  We then look at the low end of the
shape spectrum, deriving tight bounding approximations for the U-shaped and
J-shaped cases where $\alpha \ll 1$ and $\alpha \ll \beta$.  Finally, we combine results from
both ends into a single elementary-function approximation that is within $0.065\%$ of the median over the
whole domain.

This paper is one of a series.  It grew out of earlier work on the median of the gamma distribution
\citep{lyon2021closed,lyon2023tight}, which supplies several of the approaches used here for the more
complicated beta distribution:  asymptotes at the low and high ends of the shape parameter, closed-form
bounds that are tight at one end or the other, and approximations that
interpolate between a lower and an upper bound.  A recent short note \citep{lyon2026kerman} develops one improvement on
Kerman's approximation in detail:  the family of ratio forms with $g(x) = x - \tfrac{1}{3} + c/x$, its
asymptotic bounds for large shape parameters, and the member $\nu_{\textrm{B}3}$ with $c = \tfrac{1}{36}$.
A companion paper \citep{lyon2026gamma} develops approximations to the
gamma median from the one found in Section~\ref{sec:gamma}, and compares them with earlier forms.

\subsection{Relation to gamma distribution}

A fundamental generative property of the beta distribution is its direct
construction from a pair of independent gamma-distributed random variables.
Specifically, if $X \sim \text{Gamma}(\alpha,1)$ and
$Y \sim \text{Gamma}(\beta,1)$ are independent standard gamma random
variables, their ratio $\frac{X}{X+Y}$ exactly follows a beta distribution
with shape parameters $\alpha$ and $\beta$.

The gamma distribution's PDF, with shape parameter $\alpha > 0$ and
rate parameter $\beta > 0$, for $x > 0$ is:
\[
p_{\Gamma}(x) = \frac{1}{\Gamma(\alpha)\beta^{-\alpha} } x^{\alpha - 1} e^{-\beta x} \,.
\]
We mostly default its rate parameter to $\beta = 1$, for the ``standard'' gamma distribution,
and drop it from the notation, since its effect is a simple scaling of the distribution and of the median:
\[
p_{\Gamma}(x) = \frac{1}{\Gamma(\alpha)} x^{\alpha - 1} e^{-x} \,.
\]

The normalizing denominators in the PDFs are the beta function and the gamma function,
defined as the values that make the integral of the PDF equal to 1:
\[
\Gamma(\alpha) = \int_0^\infty x^{\alpha - 1} e^{-x} dx \,,
\]
\[
B(\alpha,\beta) = \int_0^1 x^{\alpha - 1} (1-x)^{\beta - 1} dx
= \frac{\Gamma(\alpha)\Gamma(\beta)}{\Gamma(\alpha+\beta)} \,.
\]

The mean $\mu_{\Gamma}(\alpha)$ of the standard gamma distribution is well known
to be $\alpha$, which is easy to verify since the first moment of the PDF
evaluates to $\Gamma(\alpha+1) / \Gamma(\alpha) = \alpha$.
The median $\nu_{\Gamma}(\alpha)$, on the other hand, implicitly defined as the
value of $x$ where the integral (the CDF) equals one-half, has no explicit formula:
\[
\frac{1}{2} = \int_0^{\nu_{\Gamma}(\alpha)} p_{\Gamma}(x) dx =
\frac{1}{\Gamma(\alpha)} \int_0^{\nu_{\Gamma}(\alpha)} x^{\alpha - 1} e^{-x} dx \,.
\]

For the scaled (as opposed to standard) gamma distribution, $\beta \ne 1$, the factor
$e^{-\beta x}$ takes the place of the beta distribution's $(1 - x)^{\beta-1}$.
These are asymptotically equal for high $\beta$ and low $x$, which is why the
beta distribution approaches the shape of a gamma distribution at high $\beta$.

See Table~\ref{tab:medians} for a list of beta and gamma medians and approximations
discussed in this paper.

\begin{table}[!htp]
\centering
 \caption{{\bf List of medians and approximations considered.}
 Gamma and beta distribution medians and approximations discussed in the paper are tabulated here for reference; $\Gamma()$ is the gamma function.}
  \small
  \begin{tabular}{@{}lll@{}}
    \rule{0pt}{12pt}Name & {Formula} & Description  \\  [2 pt]
    \hline

    \rule{0pt}{12pt}$\nu_{\Gamma}(\alpha)$ & --- & True median of gamma distribution \\  [2 pt]
    $\nu_{\Gamma 1}(\alpha)$ & $ \alpha - \frac{1}{3}$ & \begin{tabular}[t]{@{}l@{}}\citet{chen1986bounds} and\\ \citet{berg2006chen} lower bound\end{tabular} \\ [2 pt]
    $\nu_{\Gamma 2}(\alpha)$ & $ \alpha \left( 1 - \frac{1}{9\alpha}\right)^3 $ & \citet{wilson1931distribution} approx. \\ [4 pt]
    $\nu_{\Gamma 3}(\alpha)$ & $ \frac{(\alpha - \frac{1}{6})^2}{\alpha} = \alpha - \frac{1}{3} + \frac{1}{36\alpha}$ & $\alpha - \frac{1}{3} + \frac{c}{\alpha}$ with $c = \frac{1}{36}$; \citet{lyon2026kerman} \\ [4 pt]
    $\nu_{\Gamma 4}(\alpha)$ & $ \frac{\log 2}{2^{1/\alpha} - 1} $ & Approx.~from beta median at $\beta = 1$ \\ [4 pt]
    $\nu_{\Gamma 5}(\alpha)$ & $ \lim_{\beta\to\infty} \beta\,\nu_{\textrm{B}10}(\alpha,\beta)$ & Two-piece approx., see Section~\ref{sec:gamma} \\ [4 pt]

    \hline

    \rule{0pt}{12pt}$\nu_{\textrm{B}}(\alpha,\beta)$ & --- & True median of beta distribution \\  [2 pt]

    $\nu_{\textrm{B}0}(\alpha,\beta)$ & $ \frac{\nu_{\Gamma}(\alpha)}{\nu_{\Gamma}(\alpha)+\nu_{\Gamma}(\beta)} $ & Based on true gamma median \\  [4 pt]

    $\nu_{\textrm{B}1}(\alpha,\beta)$ & $ \frac{\nu_{\Gamma 1}(\alpha)}{\nu_{\Gamma 1}(\alpha)+\nu_{\Gamma 1}(\beta)} = \frac{\alpha - \frac{1}{3}}{\alpha + \beta - \frac{2}{3}}$ & \citet{kerman2011closed} approximation \\  [4 pt]

    $\nu_{\textrm{B}2}(\alpha,\beta)$ & $ \frac{\nu_{\Gamma 2}(\alpha)}{\nu_{\Gamma 2}(\alpha)+\nu_{\Gamma 2}(\beta)} = \frac{\alpha \left( 1 - \frac{1}{9\alpha }\right)^3}{\alpha \left( 1 - \frac{1}{9\alpha }\right)^3 + \beta \left( 1 - \frac{1}{9\beta }\right)^3}$ & \citet{paulson1942approximate}, via Wilson--Hilferty \\  [4 pt]

    $\nu_{\textrm{B}3}(\alpha,\beta)$ & $ \frac{\nu_{\Gamma 3}(\alpha)}{\nu_{\Gamma 3}(\alpha)+\nu_{\Gamma 3}(\beta)} = \frac{(\alpha - \frac{1}{6})^2/\alpha}{(\alpha - \frac{1}{6})^2/\alpha + (\beta - \frac{1}{6})^2/\beta}$ & Based on $\nu_{\Gamma 3}$; \citet{lyon2026kerman} \\  [4 pt]

    $\nu_{\textrm{B}4}(\alpha,\beta)$ & $ \frac{\nu_{\Gamma 4}(\alpha)}{\nu_{\Gamma 4}(\alpha)+\nu_{\Gamma 4}(\beta)} = \frac{2^{1/\beta} - 1}{2^{1/\alpha} + 2^{1/\beta} - 2} $ & Based on $\nu_{\Gamma 4}$ or Lemma~\ref{lem:special} \\  [4 pt]

    $\nu_{\textrm{B}5}(\alpha,\beta)$  & $ 2^{-1/\alpha} \left(\frac{\Gamma(\alpha+1) \Gamma(\beta)}{\Gamma(\alpha+\beta)} \right)^{1/\alpha}$ & \citet{askitis2017asymptotic} low-$\alpha$ asymptote \\ [4 pt]
    & $\quad = \left(2 \frac{\beta}{\alpha+\beta}\right)^{-1/\alpha} \left(\frac{\Gamma(\alpha+1) \Gamma(\beta+1)}{\Gamma(\alpha+\beta+1)} \right)^{1/\alpha}$ & factored as $P(\alpha,\beta)\,F(\alpha,\beta)$ \\ [4 pt]

    $\nu_{\textrm{B}6}(\alpha,\beta)$ &
    $\left(2\frac{\beta}{\alpha+\beta}\right)^{-1/\alpha} \underline{c}_0(\beta)\,e^{\underline{k}(\beta)\alpha - \eta(\beta)\alpha^2}$ &
    Elementary lower bound for $\beta > 1$ \\ [4 pt]
    $\nu_{\textrm{B}7}(\alpha,\beta)$ &
    $\left(2\frac{\beta}{\alpha+\beta}\right)^{-1/\alpha} \overline{c}_0(\beta)\,e^{\overline{k}(\beta)\alpha}$ &
    Elementary upper bound for $\beta < 1$ \\ [2 pt]
    & \multicolumn{2}{l}{\quad with $\underline{c}_0$, $\overline{c}_0$, $\underline{k}$, $\overline{k}$, and $\eta$ as in Section~\ref{sec:lowrefactor}} \\ [4 pt]
        $\nu_{\textrm{B}8}(\alpha,\beta)$ &
    $v / (1 + v)^{e_\textrm{D}(\beta)\,(1 - 0.22\,(1-\beta)(1-\alpha/\beta))}$ &
    Upper bound, $\alpha \le \beta < 1$ \\ [2 pt]
    $\nu_{\textrm{B}9}(\alpha,\beta)$ &
    $v / (1 + v)^{e_1(\beta)\,(1 + 0.2\,(1-\alpha)\beta/(\beta+2))}$ &
    Lower bound, $\alpha \le 1 < \beta$ \\ [2 pt]
    & \multicolumn{2}{l}{\quad with $v = P\,G$, $G$, $e_\textrm{D}$, and $e_1$ as in Section~\ref{sec:odds}} \\ [4 pt]
    $\nu_{1/30}(\alpha,\beta)$ &
    $\frac{\alpha - \frac{1}{3} + \frac{1}{30\alpha}}{\alpha + \beta - \frac{2}{3} + \frac{1}{30\alpha} + \frac{1}{30\beta}}$ &
    Upper bound, $1 \le \alpha \le \beta$ \\ [2 pt]
        $\nu_{\textrm{B}10}(\alpha,\beta)$ &
    Steered odds-corrected form for $\alpha < 1$, &
    Combined approximation, \\
    & interpolation of $\nu_{1/30}$ and $\nu_{\textrm{B}4}$ for $\alpha \ge 1$ & see Section~\ref{sec:whole} \\ [2 pt]

    \hline
  \end{tabular}
  \label{tab:medians}
\end{table}

\subsection{Kerman's approximation}

Figure~\ref{fig:linear} shows the true beta distribution median (computed numerically, as described
in Section~\ref{sec:methods}) along with Kerman's approximation \citep{kerman2011closed}
$\nu_{\textrm{B}1}(\alpha,\beta)$ and one simple better approximation
$\nu_{\textrm{B}2}(\alpha,\beta)$ discussed later, for seven $\beta$ values
(1, 2, 5, 20, and their reciprocals) that we use repeatedly in sections to follow.

Note the symmetries obeyed by the beta distribution, and therefore by its mean and
median:  $\mu_\textrm{B}(\alpha,\beta) = 1 - \mu_\textrm{B}(\beta,\alpha)$, and
$\nu_\textrm{B}(\alpha,\beta) = 1 - \nu_\textrm{B}(\beta,\alpha)$, implying mean
and median both equal to one-half when $\alpha = \beta$.  So without loss of
generality we focus on the cases with $\alpha \le \beta$, but it would also be
nice to find functional forms for bounds and approximations that embody the
symmetry and are exact at $\alpha = \beta$, as with Kerman's median approximation.
For $\tfrac{1}{3} < \alpha < \beta$, Kerman's approximation takes values in $(0, \tfrac{1}{2})$
and is a lower bound on the median (Theorem~\ref{thm:kerman}).

\begin{figure}[!htb]
\centerline
{\includegraphics[width=0.8\linewidth]{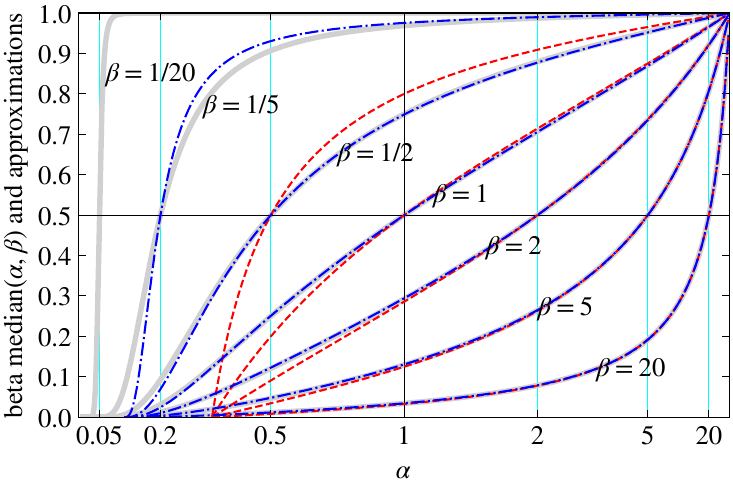}}
    \caption{{\bf Beta distribution medians and approximations.}
    The true median (wide light-gray curves) and approximations (dashed and dash-dot curves) are plotted
    on an arctan($\alpha$) axis for seven values of $\beta$ (1, 2, 5, 20, and their
    reciprocals).  Kerman's approximation (red dashed), based on a well-known
    lower bound on the median of a gamma distribution, is defined only for
    $\alpha, \beta > \frac{1}{3}$.  The other approximation (blue dash-dot),
    valid for $\alpha, \beta > \frac{1}{9}$, is based on a formula similar to
    Kerman's, but using the Wilson--Hilferty approximation to the median of the
    gamma distribution instead of the Berg and Pedersen lower bound; it is implicit in an
    approximation to the $F$ distribution by \citet{paulson1942approximate}.
    The curves all cross 0.5
    where $\alpha = \beta$.  In most subsequent figures we only plot where
    $\alpha \le \beta$, corresponding to curves in the lower half of this figure.}
  \label{fig:linear}
\end{figure}

\subsection{Example}

Figure~\ref{fig:example} shows an example beta distribution in which the PDF shows two extreme modes due to $\alpha$ and $\beta$ both being less than 1.  Though Kerman's approximation is defined here, with $\alpha =  0.4$ and $\beta = 0.8$ both being above $\tfrac{1}{3}$, it is not usefully accurate in this region, as the figure shows.  The figure also shows some other approximations that we propose, which are better in this case.

\begin{figure}[!htb]
  \centerline
  {\includegraphics[width=0.8\linewidth]{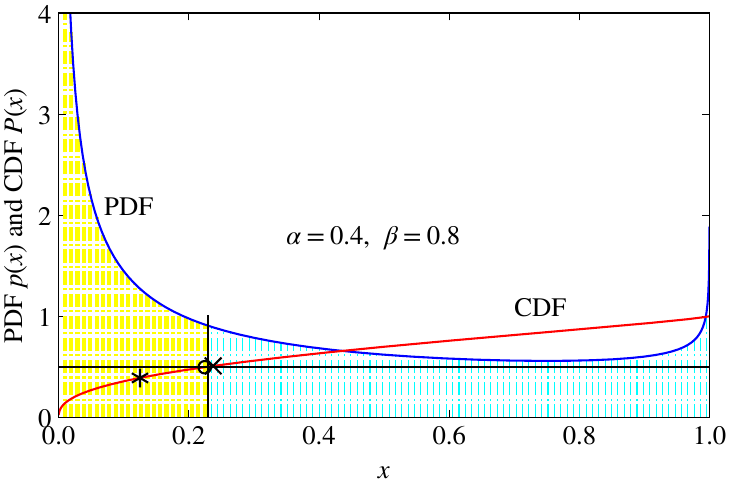}}
    \caption{{\bf Example bimodal beta distribution with \texorpdfstring{$\alpha = 0.4$}{alpha = 0.4} and \texorpdfstring{$\beta = 0.8$}{beta = 0.8}.}
    The beta distribution PDF (blue curve) has a mode at 0 since $\alpha < 1$, and a mode at 1 since $\beta < 1$.  The CDF (red curve) is annotated to show the median (crossed black lines); the vertical line divides the area under the PDF into two equal parts (stippled yellow and cyan).  Several median approximations are marked along the CDF\@.  Kerman's approximation (asterisk) is not very good in this low-$\alpha$ region.  Using a ratio of actual medians of the gamma distribution yields an approximate beta median that is better at medium and high values of $\alpha$ and $\beta$ (circle)---only about $2\%$ low here.  And an approximation from a low-$\alpha$ asymptote (cross) is about 4\% high here.}
  \label{fig:example}
\end{figure}

\section{Methods}
\label{sec:methods}

We study the behavior of the beta distribution median and of \citeauthor{kerman2011closed}'s (\citeyear{kerman2011closed}) popular approximation to it, which is based on \citeauthor{berg2006chen}'s (\citeyear{berg2006chen}) approximation to the gamma distribution median, and discuss ways of generalizing and improving to better approximations of a similar form.

We further investigate the median more analytically in several regions, in hopes of finding relations that will lead to good bounds and approximations.  We first examine the high-$\beta$ region, then the neighborhood of $\beta = 1$, then the low-$\alpha$ region, and finally combine the results into an approximation over the whole domain.  In each region, we examine and exploit asymptotes and limits, and relationships to the gamma distribution median.

The true medians were originally computed with the double-precision \texttt{betaincinv}
function and other standard functions of Matlab.  The figures and the numerical claims were then
recomputed in a Python port of those scripts, with the corresponding SciPy functions
\citep{virtanen2020scipy}.  Double-precision medians can lose accuracy for small shape parameters, and
underflow altogether when $\alpha$ is small, so the error ranges quoted for the low-$\alpha$ bounds and the
combined approximation (Sections~\ref{sec:odds} and~\ref{sec:whole}) were checked again on the same grids
with medians computed to 30 digits with the Python library mpmath \citep{mpmath}.  Where the
double-precision median is representable, it agrees with the 30-digit one to within $10^{-8}$ in relative
terms, and the 30-digit check gives the same error ranges, including at the grid points where the median
underflows in double precision.  A denser grid near the diagonal for $0.001 \le \beta \le 0.1$ finds errors of
$\nu_{\textrm{B}10}$ between $-0.064\%$ and $+0.065\%$, $3{,}000$ random off-grid points in each low-$\alpha$
region confirm the bounds and the error range, and a scan near the diagonal for $\beta$ from $10^{-20}$ to
$10^{-4}$ follows the error below the grids.  The numerical evidence for the conjectures in
the appendix was computed with 30- to 40-digit medians in the same way.  The Python notebook and the mpmath
script are included as supplementary material.

The author wrote the first version of this paper, including proofs of several of its bounds,
without AI tools.  Later in its development, several AI systems were used to check, simplify, and improve proofs, to search for workable proof methods, and to prove further bounds:  ChatGPT (OpenAI),
which was useful in exposing errors in candidate proofs, and Gemini (Google) and Claude (Anthropic),
which found or simplified several of the proofs.  Their main contributions are noted at
the start of the appendix, much of which Claude drafted.  Claude also helped port the author's Matlab
scripts to Python, and Gemini in Antigravity reproduced the work from scratch in a Python Colab
notebook.  The edge-exact construction of Sections~\ref{sec:odds} and~\ref{sec:whole}, which gives
$\nu_{\textrm{B}8}$, $\nu_{\textrm{B}9}$, and $\nu_{\textrm{B}10}$, and the edge-exact $c_\textrm{E}$ of
Section~\ref{sec:asymexp}, with their fitted constants, came from an exploration by Claude, prompted by the
author's question whether the factor $F$ could be made exact where $\nu_{\textrm{B}4}$ is; another Claude
session checked them independently.  None of these systems could provide
complete proofs of the three remaining conjectures (Conjectures~\ref{conj:wh}, \ref{conj:largeR}, and~\ref{thm:B4bound}), though numerical checks support all three.  Google's Deep Review tool
\citep{google2026deepreview} reviewed the manuscript.  The author checked the proofs, revised the text
throughout, and is responsible for its content.

\section{Approach from the gamma distribution median}

\begin{figure}[!htb]
  \centerline
  {\includegraphics[width=0.8\linewidth]{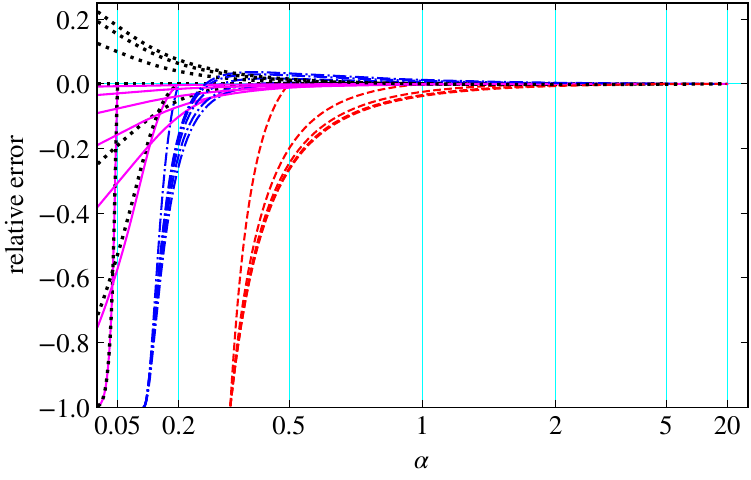}}
    \caption{{\bf Relative errors of median approximations for \texorpdfstring{$\alpha \le \beta$}{alpha <= beta}.}
    Relative errors of Kerman's approximation, $\nu_{\textrm{B}1}(\alpha,\beta)$ (red dashed); approximation using the actual gamma median, $\nu_{\textrm{B}0}(\alpha,\beta)$ (magenta solid); approximation based on the Wilson--Hilferty approximation, $\nu_{\textrm{B}2}(\alpha,\beta)$ (blue dash-dot); and one based on the exact beta median solution where $\alpha$ or $\beta$ is equal to $1$, $\nu_{\textrm{B}4}(\alpha,\beta)$ (black dotted).  Each curve is for a different one of the seven $\beta$ values used.  The $\beta$ for a particular curve is the $\alpha$ coordinate where the curve meets the zero-error line (the high-$\beta$ curves are better resolved in the zoomed-in view in the next figure).}
  \label{fig:rel3}
\end{figure}

\begin{figure}[!htb]
  \centerline
  {\includegraphics[width=0.8\linewidth]{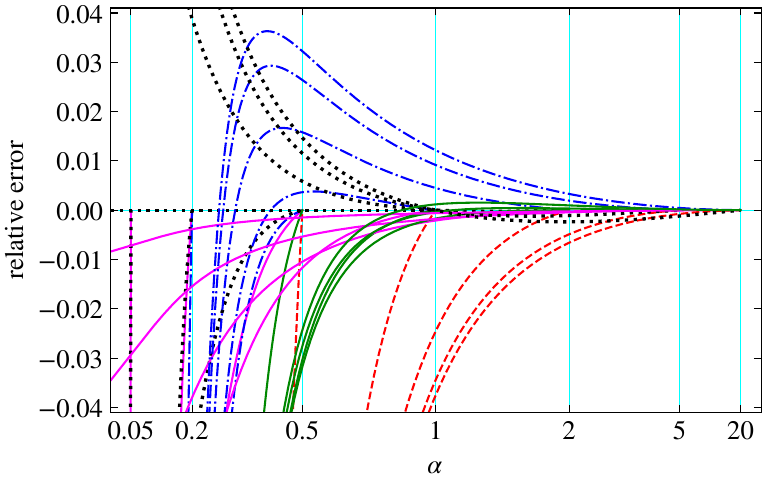}}
    \caption{{\bf Relative errors zoomed in.}
    Same as Figure~\ref{fig:rel3}, but zoomed to show where the approximations are within 4\% of the true median, and with a mostly tighter $\nu_{\textrm{B}3}(\alpha,\beta)$ added (green solid).  No version is uniformly best. }
  \label{fig:rel4}
\end{figure}

In terms of the lower bound and high-$\alpha$ asymptote to the median of the gamma distribution \citep{chen1986bounds,berg2006chen}, $\nu_\Gamma(\alpha) \gtrapprox \nu_{\Gamma 1}(\alpha) = \alpha - \frac{1}{3}$, Kerman's approximation can be written as
\[
\nu_{\textrm{B}1}(\alpha,\beta) = \frac{\nu_{\Gamma 1}(\alpha)} {\nu_{\Gamma 1}(\alpha) + \nu_{\Gamma 1}(\beta) }
\]
as \citet{kerman2011closed} points out.  So it makes sense to ask whether using the gamma median itself in such an expression would give an exact result (it does not) or a useful approximation, as:
\[
\nu_{\textrm{B}0}(\alpha,\beta) = \frac{\nu_{\Gamma}(\alpha)} {\nu_{\Gamma}(\alpha) + \nu_{\Gamma}(\beta)}\,.
\]

Since there is no actual formula for $\nu_{\Gamma}$, if it looks promising we'll want to try other bounds and approximations to the gamma median in such a formula.  We also consider whether there is any function $g$ of one variable such that
\[
\nu_\textrm{B}(\alpha,\beta) = \frac{g(\alpha)} {g(\alpha) + g(\beta)} \,.
\]

Functions $g$ that provide good bounds or approximations of that form would be interesting to find, whether they are from good gamma median approximations or not.

\subsection{High-\texorpdfstring{$\beta$}{beta} asymptote}

For large $\beta$, the shape of the beta distribution approaches the shape of a scaled gamma distribution of the same parameters, so their medians are both asymptotic to $\nu_{\Gamma}(\alpha)/\beta$, which matches the asymptote of our approximation $\nu_{\textrm{B}0}(\alpha,\beta)$.
This limit is proved as Theorem~\ref{thm:asymptote}.
That is, the approximation $\nu_{\textrm{B}}(\alpha,\beta) \approx \nu_{\textrm{B}0}(\alpha,\beta)$ is asymptotically tight for large $\beta$:
\[
\nu_{\textrm{B}0}(\alpha,\beta) = \frac{\nu_{\Gamma}(\alpha)} {\nu_{\Gamma}(\alpha) + \nu_{\Gamma}(\beta)} \quad \implies \quad
\lim_{\beta \to \infty} \frac{\nu_{\textrm{B}0}(\alpha,\beta)}{\nu_{\textrm{B}}(\alpha,\beta)} = 1 \quad \textrm{for all} \quad \alpha > 0 \, .
\]

The asymptotic analysis does not tell us whether the approximation approaches the true value from above or from below, but from the plots it appears that the approximation $\nu_{\textrm{B}0}(\alpha,\beta)$ so defined is a lower bound on the true median.  We formalize and prove this as Theorem~\ref{thm:lower_bound}.

\subsection{Wilson--Hilferty approximation}

Another simple approximation to the median of the gamma distribution comes from the Wilson--Hilferty approximation \citep{wilson1931distribution,abdel1954approximate,ouimet2022refined}, based on noticing that the cube root of a chi-square- or gamma-distributed random variable is very nearly normally distributed, so its median approximately equals its mean, which transforms back via cubing to:
\[
\nu_\Gamma(\alpha) \approx \nu_{\Gamma 2}(\alpha) = \alpha \left( 1 - \frac{1}{9\alpha} \right)^3 \quad \textrm{for} \quad \alpha > \frac{1}{9} \, .
\]
This gamma median approximation leads to a beta median approximation in a ratio form similar to Kerman's:
\[
\nu_{\textrm{B}2}(\alpha,\beta) = \frac{\nu_{\Gamma 2}(\alpha)}{\nu_{\Gamma 2}(\alpha) + \nu_{\Gamma 2}(\beta)}
= \frac{\alpha \left( 1 - \frac{1}{9\alpha } \right)^3}{\alpha \left( 1 - \frac{1}{9\alpha } \right)^3 + \beta \left( 1 - \frac{1}{9\beta } \right)^3}
\quad \textrm{for} \quad \alpha, \beta > \frac{1}{9} \,.
\]
This approximation is not new, though it seems to be little known as an approximation to the beta median.
It is implicit in the normal approximation to the $F$ distribution of \citet{paulson1942approximate},
which applies the cube-root transformation of \citet{wilson1931distribution} to the numerator and
denominator of $F$.  Setting Paulson's normal deviate to zero gives the median of $F$ with $n_1$ and $n_2$
degrees of freedom as $\big((1 - \frac{2}{9n_1})/(1 - \frac{2}{9n_2})\big)^3$; since
$\alpha F/(\alpha F + \beta)$ has the $\textrm{Beta}(\alpha,\beta)$ distribution when $n_1 = 2\alpha$ and
$n_2 = 2\beta$, this is $\nu_{\textrm{B}2}$.  Approximations to percentage points of Fisher's
$z = \frac{1}{2}\log F$ also give the beta median, but they are designed for the tails.  At the median,
\citeauthor{carter1947approximation}'s (\citeyear{carter1947approximation}) refinement of the
Fisher--Cochran formula is singular at $\alpha$ or $\beta = \frac{1}{2}$, and over $1 \le \alpha \le \beta$
its worst relative error is $1/\log 2 - 1 \approx +44\%$, at $\alpha = 1$ as $\beta \to \infty$.

$\nu_{\textrm{B}2}$ is the approximation shown as blue dash--dot in Figure~\ref{fig:linear}.  Figures~\ref{fig:rel3} and~\ref{fig:rel4} compare its relative error to that of other ratio-form approximations.  It is clear that the true gamma median is still mostly better in that form, though not perfect.

This approximation goes negative for $\alpha < \tfrac{1}{9}$, so it is less than the true median for small enough
values of $\alpha$, and it is greater than the true median at $\alpha = 1$ (for $\alpha < \beta$
as usual; Theorem~\ref{thm:wh_edge}).  Numerically it crosses from high to low only once, always
with $\alpha < 1$, and we conjecture that it is an upper bound for $1 \le \alpha < \beta$
(Conjecture~\ref{conj:wh}), but we have not been able to prove that in general.

For large shape parameters the picture is clearer.  Along any ray $\beta = r\alpha$, the logit of the median has an explicit second-order expansion (Theorem~\ref{thm:largeexp}), which shows that $\nu_{\textrm{B}2}$ is an upper bound there for large enough $\alpha$ (Theorem~\ref{thm:whray}).  A uniform bound on the remainder of that expansion (Conjecture~\ref{conj:largeR}) would imply Conjecture~\ref{conj:wh} over the whole region $1 \le \alpha < \beta$.  The $\nu_{\textrm{B}4}$ conjecture (Conjecture~\ref{thm:B4bound}) is proved outright once both shape parameters exceed about 6 (Theorem~\ref{thm:B4large}).

Being defined, or positive, only down to $\tfrac{1}{9}$, $\nu_{\textrm{B}2}$ is again not fully satisfying our quest for approximations that work over the full domain of shape parameters.

\subsection{Gamma median's asymptotic expansion}
\label{sec:asymexp}

Partial sums of the known asymptotic expansion for $\nu_{\Gamma}(\alpha)$ \citep{choi1994medians,berg2006chen} can be used for $g$:
\[
g(\alpha) = \alpha - \tfrac{1}{3} + \tfrac{8}{405 \alpha} +
  \tfrac{184}{25515 \alpha^2} + \tfrac{2248}{3444525 \alpha^3} -
  \tfrac{19006408}{15345358875 \alpha^4} + ... \,.
\]
These make approximations $g(\alpha) / (g(\alpha)+g(\beta))$ that can be quite accurate for
$\alpha$ and $\beta$ greater than 1, and more accurate than Kerman's even down to $\alpha$ below $\tfrac{1}{2}$.
However, because this is an asymptotic series whose coefficients grow factorially,
it diverges for any fixed $\alpha$ as the number of terms grows \citep{berg2006chen}.
At lower values of the shape parameters, the sum diverges very rapidly, with increasing-order
poles at 0.

For large shape parameters, Theorem~\ref{thm:cfamily}, also proved in \citet{lyon2026kerman}, settles which ratio forms with
$g(x) = x - \tfrac{1}{3} + c/x + O(x^{-2})$ are asymptotic bounds.  Along every ray $\beta = r\alpha$,
such a form is eventually a lower bound if $c \le \tfrac{8}{405}$ and an upper bound if
$c \ge \tfrac{1}{45}$; for $c$ in between, which side it falls on depends on the ray.  Kerman's
$\nu_{\textrm{B}1}$ ($c = 0$) and every partial sum of the series beyond the first
($c = \tfrac{8}{405}$) are therefore asymptotic lower bounds, and the Wilson--Hilferty $\nu_{\textrm{B}2}$
($c = \tfrac{1}{27}$, from $\nu_{\Gamma 2}(x) = x - \tfrac{1}{3} + \tfrac{1}{27x} - \tfrac{1}{729x^2}$)
is an asymptotic upper bound.  The true gamma median itself has $c = \tfrac{8}{405}$, so the dividing
line begins exactly at the coefficient of its asymptotic expansion.  In fact, along each ray
$\beta = r\alpha$ the median itself agrees to second order with the ratio form whose $c$ is the
threshold $c_r$ of Theorem~\ref{thm:cfamily}.

Though no member of this family serves over the full domain, the family is far from a dead end.
Over most of the domain its members bracket the median from both sides, and the value of $c$
that reproduces the median exactly varies only slowly with $\alpha$ and $\beta$.  That makes $c$ a
natural quantity to tune as a function of the shape parameters.  The ratio form increases with $c$
for $\tfrac{1}{6} < \alpha < \beta$ (where $g > 0$):  its logit $\log g(\alpha) - \log g(\beta)$ has
$c$-derivative $1/(\alpha g(\alpha)) - 1/(\beta g(\beta))$, which is positive because
$x\,g(x) = x^2 - \tfrac{1}{3}x + c$ is increasing for $x > \tfrac{1}{6}$.  So the ratio form is an upper bound
wherever $c$ exceeds the value $c^*(\alpha,\beta)$ that makes it exact.  Numerically, $c^*$ has a single peak,
about $0.0344$ at $\alpha = \beta \approx 0.468$ (as a limit along the diagonal), so any constant above it,
such as $c = \tfrac{1}{29}$, gives an upper bound for $\alpha < \beta$ wherever both shape parameters exceed
$\tfrac{1}{6}$; \citet{lyon2026kerman} maps $c^*$ in detail.  Fitting $c$ as a function of the shape
parameters does better still.  On the edge $\beta = 1$, where the median is $2^{-1/\alpha}$, the exact value
$S(x) = c^*(x,1)$ is elementary:
\[
S(x) = \frac{x\left(\tfrac{2}{3}\,\nu_{\Gamma 4}(x) - (x - \tfrac{1}{3})\log 2\right)}{\log 2 - x\,\nu_{\Gamma 4}(x)} \, ,
\]
with $\nu_{\Gamma 4}$ the gamma median approximation of Section~\ref{sec:gamma}.  It rises to a maximum of about $0.03347$ near $x = 0.404$, is exactly
$\tfrac{1}{30}$ at $x = \tfrac{1}{2}$, and falls through $c^*(1,1) \approx 0.03177$ at $x = 1$ (a removable $0/0$) to
about $0.02648$ as $x \to \infty$.  The ray asymptote of Theorem~\ref{thm:cfamily}, written symmetrically, is
$c_\infty(\alpha,\beta) = \tfrac{8}{405} + 4\alpha\beta/\left(405(\alpha+\beta)^2\right)$.  Combining the two,
\[
c_\textrm{E}(\alpha,\beta) = c_\infty(\alpha,\beta) + \frac{S(\alpha) - c_\infty(\alpha,1)}{\beta}
+ \frac{S(\beta) - c_\infty(1,\beta)}{\alpha} - \frac{S(1) - c_\infty(1,1)}{\alpha\beta}
\]
equals $S$ on both edges, so the ratio form with $c = c_\textrm{E}$ is exact on the lines $\alpha = 1$,
$\beta = 1$, and $\alpha = \beta$.  With no fitted constants, it is within $0.016\%$ of the median for
$1 \le \alpha \le \beta \le 10^6$, and adding the single term $0.000674\,(1 - 1/\alpha)(1 - 1/\beta)$ to $c$
brings that to $0.005\%$.

Even a single constant does well, as developed in \citet{lyon2026kerman}, where the approximation
below is called $\nu_{1/36}$.
With $c = \tfrac{1}{36}$, $g(x) = (x - \tfrac{1}{6})^2/x$,
giving
\begin{equation}
\nu_{\textrm{B}3}(\alpha, \beta) = \frac{(\alpha - \tfrac{1}{6})^2/\alpha}
  {(\alpha - \tfrac{1}{6})^2/\alpha + (\beta - \tfrac{1}{6})^2/\beta} \, .
\label{eq:nuB3}
\end{equation}
Numerically, its relative error lies between $-0.171\%$ and $+0.212\%$ for all
$1 \le \alpha < \beta$, an order of magnitude better than $\nu_{\textrm{B}1}$ and
six times better than $\nu_{\textrm{B}2}$ there, though it is not a bound.  Numerically it is more
accurate than Kerman's approximation wherever the latter is defined, and $\tfrac{1}{36} \approx 0.027778$
is within $1\%$ of the minimax constant over $1 \le \alpha \le \beta$, numerically $c \approx 0.027537$.
Its accuracy degrades quickly for $\alpha < \tfrac{1}{2}$, since $g$ vanishes at
$x = \tfrac{1}{6}$.  It is the ratio form of the gamma median approximation
$\nu_{\Gamma 3}(x) = (x - \tfrac{1}{6})^2/x$, and $\beta\,\nu_{\textrm{B}3}(\alpha,\beta) \to \nu_{\Gamma 3}(\alpha)$
as $\beta \to \infty$; we return to it in Section~\ref{sec:gamma}.

\subsection{Middle special case}

Figures~\ref{fig:rel3} and~\ref{fig:rel4} also include one more approximation (black dotted), based on a similar ratio formula designed to be accurate when either $\alpha$ or $\beta$ is equal to $1$.  Note that a function $g$ that satisfies $\nu_\textrm{B}(\alpha,\beta) = \frac{g(\alpha)}{g(\alpha) + g(\beta)}$ for all $\alpha > 0$ and $\beta > 0$ must satisfy it at $\beta = 1$, where the median is easily found to be $2^{-1/\alpha}$, and therefore $g$ must satisfy:
\[
\frac{g(\alpha)}{g(1)} = \frac{1}{2^{1/\alpha} - 1} \quad \textrm{for all} \quad \alpha > 0 \, ,
\]
which we prove as Lemma~\ref{lem:special} in the appendix.

For any nonzero $g(1)$, a function $g$ that satisfies the constraint of Lemma~\ref{lem:special} leads to this solution in the given form, equal to the true median when $\beta = 1$ or $\alpha = 1$ or $\alpha = \beta$:
\[
\nu_{\textrm{B}4}(\alpha,\beta) = \frac{2^{1/\beta} - 1}{2^{1/\alpha} + 2^{1/\beta} - 2} \, .
\]

\begin{figure}[!htb]
  \centering
  \includegraphics[width=0.6\linewidth]{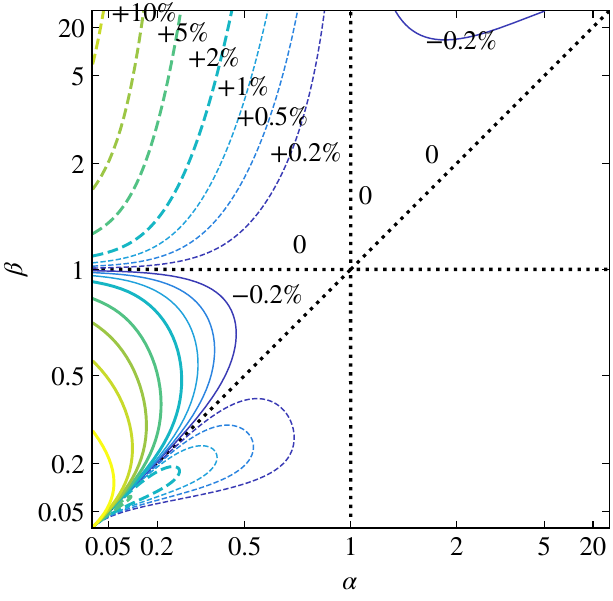}
    \caption{{\bf Relative error contours of \texorpdfstring{$\nu_{\textrm{B}4}$}{nu B4}.}
    Large regions of parameter space lead to relative errors better than 1\%, but there is also a large low-$\alpha$ region with larger errors.  In this figure and the other relative error contour figures, the errors in the lower-right triangle with $\alpha > \beta$ are formed by reflection; the magnitudes of the relative errors there are less, since the errors are relative to higher median values.  Contours are at $\pm 0.2\%$, $\pm 0.5\%$, and $\pm 1\%$ (thin) and $\pm 2\%$, $\pm 5\%$, $\pm 10\%$, $\pm 20\%$, and $\pm 50\%$ (thick), dashed for positive and solid for negative errors, with the zero contour dotted.}
  \label{fig:contour1}
\end{figure}

The error sign of the approximation $\nu_{\textrm{B}4}$ divides the $\alpha-1,\beta-1$ plane by quadrants, and along the diagonal, as is clear in Figure~\ref{fig:contour1}.  That is, the sign of the error $\nu_{\textrm{B}4}(\alpha,\beta) - \nu_{\textrm{B}}(\alpha,\beta)$ is identical to the sign of the product $(\alpha-1)(\beta-1)(\alpha-\beta)$.  We formalize this observation as Conjecture~\ref{thm:B4bound} in the appendix, where we provide strong supporting evidence and a partial proof.

\subsection{Non-existence of \texorpdfstring{$g$}{g}}

Figures~\ref{fig:rel3}, \ref{fig:rel4}, and~\ref{fig:contour1} show that $\nu_{\textrm{B}4}(\alpha,\beta)$ has nonzero error relative to the true median, which implies that \emph{no function $g$ leads to the true median via that relation in general}.  But the plots rely on someone's numerical implementation of the inverse incomplete beta function, so we formalize the non-existence without relying on such things, in Theorem~\ref{thm:non} (see appendix).

\section{Approach from the low end}

When the low-$\alpha$ region is of interest, we need a different approach.  Figure~\ref{fig:loglog} illustrates the interesting behavior via a log-log plot, revealing that the new $\nu_{\textrm{B}4}(\alpha,\beta)$ approximation has roughly the right form, but is not asymptotically accurate except when $\beta=1$.  So we consider a new approach.

\begin{figure}[!ht]
  \centering
  \includegraphics[width=0.8\linewidth]{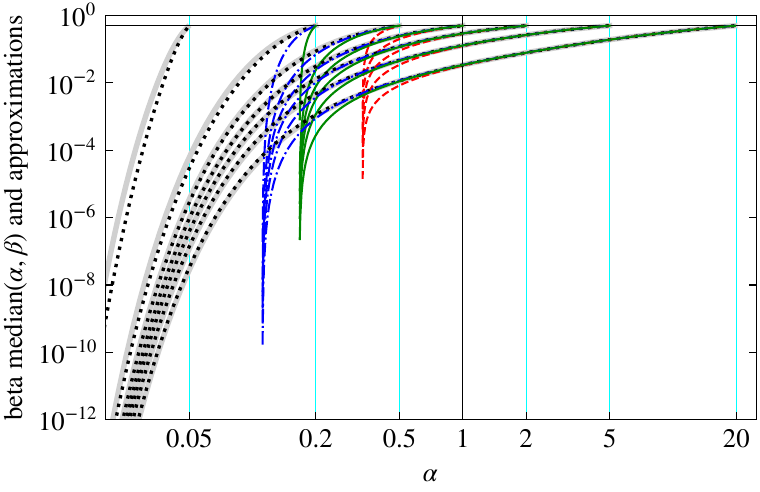}
    \caption{{\bf Medians and approximations on log-log axes.}
    Here the true median and four approximations are plotted on log-log axes, each using the same curve styles as previous figures, for the same set of $\beta$ values, where $\alpha \le \beta$.
    Notice that $\nu_{\textrm{B}4}(\alpha,\beta)$ (black dotted) follows the general shape of the true median (wide light gray) at low $\alpha$ as the values get to be extraordinarily close to 0, though not with a particularly small relative error, while the other ratio-form approximations go to zero long before reaching such low $\alpha$ values.}
  \label{fig:loglog}
\end{figure}

Instead of the ratio-type formula, we can solve directly for a low-$\alpha$ asymptote, as \citet{lyon2021closed} did for the gamma distribution median.  When the median is near 0, the factor $(1-x)^{\beta-1}$ in the PDF region being integrated is very close to 1.  So we get a useful asymptote by replacing it with 1, which makes the integral equation simple to solve:
\[
\frac{1}{2} = \frac{1}{B(\alpha,\beta)} \int_0^{\nu_{\textrm{B}5}(\alpha,\beta)}x^{\alpha-1}dx
\]
\[
\frac{1}{2} = \frac{1}{B(\alpha,\beta)} \frac{\nu_{\textrm{B}5}(\alpha,\beta)^{\alpha}}{\alpha}
\]
\[
\nu_{\textrm{B}5}(\alpha,\beta) = \left(\frac{\alpha B(\alpha,\beta)}{2} \right)^{1/\alpha}
\]

We use the known expression for the beta function in terms of the gamma function to be able to absorb the $\alpha$ factor; that is, use:
\[
B(\alpha,\beta) = \frac{\Gamma(\alpha) \Gamma(\beta)}{\Gamma(\alpha+\beta)} \quad \textrm{and} \quad \alpha \Gamma(\alpha) = \Gamma(\alpha+1)
\]
to rewrite the approximation as
\[
\nu_{\textrm{B}5}(\alpha,\beta) = 2^{-1/\alpha} \left(\frac{\Gamma(\alpha+1) \Gamma(\beta)}{\Gamma(\alpha+\beta)} \right)^{1/\alpha}
\]
which is the asymptote found by \citet{askitis2017asymptotic} via an asymptotic series expansion method (his Corollary 1, except that in his article he inverts the factor involving the gamma function when first stating the result, before getting it right in the proof).  We prefer to write it factored this way, with the $2^{-1/\alpha}$ factor, similar to how we expressed bounds and approximations to the gamma distribution median \citep{lyon2021closed, lyon2023tight}, though writing that factor as $(\tfrac{1}{2})^{1/\alpha}$ might be seen as more logical, with the $\frac{1}{2}$ coming from the CDF value at the median.

Theorem~\ref{thm:low} shows that $\nu_{\textrm{B}5}(\alpha,\beta)$ is a lower bound for $\beta > 1$
and an upper bound for $\beta < 1$, and is exact at $\beta = 1$.

When $\alpha$ and $\beta$ are both low (i.e.~in the bimodal case), there is a concentration of probability near 0 proportional to $\frac{1}{\alpha}$ and a concentration near 1 proportional to $\frac{1}{\beta}$ (that is, with the same constant of proportionality).  If $\alpha < \beta$ and we treat the PDF near 0 as approximately a gamma distribution with shape parameter $\alpha$, but scaled down by a factor $\frac{\beta}{\alpha+\beta}$ to account for the total distribution being split between the two modes, then we can approximate the median by finding where the unscaled gamma CDF reaches $\frac{1}{2} \frac{\alpha+\beta}{\beta}$ instead of just $\frac{1}{2}$.  That suggests writing $\nu_{\textrm{B}5}$ in the factored form
\[
\nu_{\textrm{B}5}(\alpha,\beta) = P(\alpha,\beta)\, F(\alpha,\beta) \, ,
\]
with
\[
P(\alpha,\beta) = \left(2\,\frac{\beta}{\alpha+\beta}\right)^{-1/\alpha} , \qquad
F(\alpha,\beta) = \left(\frac{\Gamma(\alpha+1) \Gamma(\beta+1)}{\Gamma(\alpha+\beta+1)} \right)^{1/\alpha} .
\]
The first factor carries the rapid variation of the median at low $\alpha$; the second is well behaved.
As $\alpha \to 0$, $F(\alpha,\beta)$ tends to $e^{-\gamma - \psi_0(\beta+1)}$, which is finite for all
$\beta \ge 0$, and $\log F$ is concave in $\alpha$, with slope $\frac{\pi^2}{12} - \frac{1}{2}\psi_1(\beta+1)$
at $\alpha = 0$ (Theorem~\ref{thm:lowasym} and Lemma~\ref{lem:logconc}, where $F$ is written $M_\beta$).  The
limit is equivalent to \citeauthor{askitis2017asymptotic}'s (\citeyear{askitis2017asymptotic}) finding that
$2^{1/\alpha} \nu_{\textrm{B}}(\alpha,\beta) \to e^{- \gamma -\psi_0(\beta)}$.

Figure~\ref{fig:rel5} shows the relative error of $\nu_{\textrm{B}5}$.  It is excellent for $\alpha \ll \beta$,
but for $\beta < 1$ it is too high near $\alpha = \beta$, by nearly $100\%$ when $\beta$ is small, and for
large $\beta$ it is too low as $\alpha \to 1$, by up to $28\%$.

\begin{figure}[!ht]
  \centering
  \includegraphics[width=0.8\linewidth]{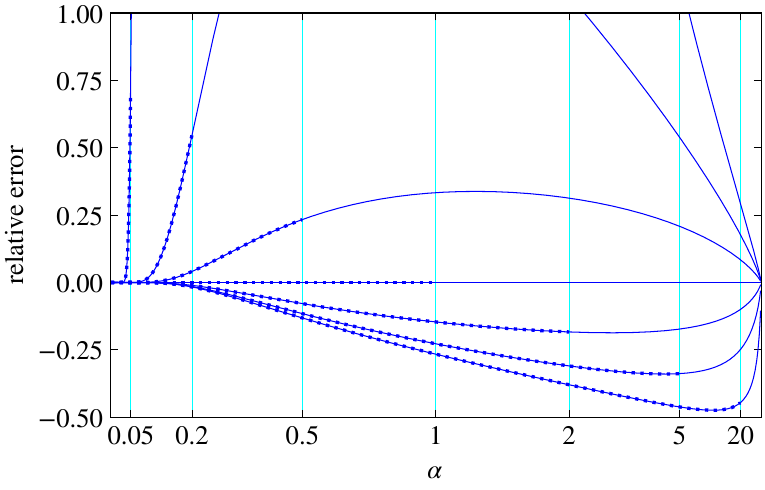}
    \caption{{\bf Relative error of the low-\texorpdfstring{$\alpha$}{alpha} asymptote.}
    The relative error of $\nu_{\textrm{B}5}(\alpha,\beta)$ (blue dotted for $\alpha \le \beta$), which is
    exact at $\beta = 1$.  The curves are extended (thin solid blue) to $\alpha > \beta$ to illustrate the full
    domain of Theorem~\ref{thm:low}.}
  \label{fig:rel5}
\end{figure}

\subsection{Elementary bounds}
\label{sec:lowrefactor}

Replacing $\log F$ by its first- or second-order Taylor polynomial at $\alpha = 0$, and the polygamma functions in the coefficients by
one-sided bounds \citep{alzer1997inequalities}, gives elementary
functions that provably bound the median.  Here elementary means built from arithmetic operations, powers,
exponentials, and logarithms, with no gamma or polygamma functions.  Those bounds come from the asymptotic series of the polygamma
functions, which are loosest at small arguments, so we evaluate them at $w = \beta + 2$ rather than at
$z = \beta + 1$, through the recurrences $\psi_n(z) = \psi_n(w) - (-1)^n n!/z^{n+1}$.  With $z = \beta + 1$
and $w = \beta + 2$, define
\begin{align*}
\nu_{\textrm{B}6}(\alpha,\beta) &= \left(2\,\frac{\beta}{\alpha+\beta}\right)^{-1/\alpha}
\underline{c}_0(\beta)\, e^{\underline{k}(\beta)\,\alpha - \eta(\beta)\,\alpha^2} \, , \\
\nu_{\textrm{B}7}(\alpha,\beta) &= \left(2\,\frac{\beta}{\alpha+\beta}\right)^{-1/\alpha}
\overline{c}_0(\beta)\, e^{\overline{k}(\beta)\,\alpha} \, ,
\end{align*}
where
\begin{align*}
\overline{c}_0(\beta) &= \frac{e^{-\gamma + 1/z}}{w}\, e^{\frac{1}{2w} + \frac{1}{12w^2}} \, , \qquad
\underline{c}_0(\beta) = \overline{c}_0(\beta)\, e^{-\frac{1}{120w^4}} \, , \\
\overline{k}(\beta) &= \frac{\pi^2}{12} - \frac{1}{2z^2}
 - \frac{1}{2}\left(\frac{1}{w} + \frac{1}{2w^2} + \frac{1}{6w^3} - \frac{1}{30w^5}\right) , \\
\underline{k}(\beta) &= \overline{k}(\beta) - \frac{1}{84w^7} \, , \\
\eta(\beta) &= \frac{\zeta(3)}{3} - \frac{1}{6}\left(\frac{2}{z^3} + \frac{1}{w^2} + \frac{1}{w^3} + \frac{1}{2w^4} - \frac{1}{6w^6}\right) .
\end{align*}
Evaluating at $w$ makes these much tighter for small $\beta$:  as $\beta \to 0$, $\overline{c}_0$ exceeds the
limit of $F$ at $\alpha = 0$ by only $0.05\%$, where the same bound evaluated at $z$ exceeds it by $0.6\%$.
Theorem~\ref{thm:lowbounds} shows that $\nu_{\textrm{B}6} < \nu_{\textrm{B}5} < \nu_{\textrm{B}7}$ for all
$\alpha, \beta > 0$.  With Theorem~\ref{thm:low}, $\nu_{\textrm{B}6}$ is therefore a lower bound on the
median for $\beta > 1$, and $\nu_{\textrm{B}7}$ an upper bound for $\beta < 1$.  The quadratic term is
what makes $\nu_{\textrm{B}6}$ a lower bound:  the logarithm of the second factor of $\nu_{\textrm{B}5}$
has a positive third derivative, so its second-order Taylor polynomial lies below it and its first-order
one above it.  That is also why $\nu_{\textrm{B}7}$ stops at first order.

\subsection{Odds-corrected low-\texorpdfstring{$\alpha$}{alpha} bounds}
\label{sec:odds}

The bounds $\nu_{\textrm{B}6}$ and $\nu_{\textrm{B}7}$ keep the leading factor $P$ and approximate only the
second factor $F$, so they inherit the two failures of $\nu_{\textrm{B}5}$ (Figure~\ref{fig:rel5}).  For
$\beta < 1$, $\nu_{\textrm{B}5}$ is too high near $\alpha = \beta$:  there $P = 1$ and the median is
$\tfrac{1}{2}$, but $\nu_{\textrm{B}5}(\beta,\beta) = \left(\Gamma(\beta+1)^2/\Gamma(2\beta+1)\right)^{1/\beta}$,
which tends to $1$ as $\beta \to 0$, so the relative error approaches $+100\%$.  For large $\beta$, it is too
low as $\alpha \to 1$, by up to $28\%$.  Neither problem comes from approximating $F$.

The ratio $\nu_\textrm{B}/P$ shows what a better second factor must do.  For small $\beta$, it stays close
to $F(0,\beta)$ until $\alpha$ is near $\beta$, and then falls abruptly to $\tfrac{1}{2}$,
over the range of $\alpha$ where $P$ rises from near $0$ to $1$.  At $\beta = 0.02$, for example, it is
$0.968$ for $\alpha/\beta$ up to $0.7$, $0.919$ at $\alpha/\beta = 0.9$, and $0.561$ at
$\alpha/\beta = 0.99$.  No smooth function of $\alpha$ alone follows that, but a function of $P$ can.

The incomplete beta function has the series form
\[
B_x(\alpha,\beta) = \frac{x^\alpha}{\alpha}\; {}_2F_1(\alpha, 1-\beta; \alpha+1; x) \, ,
\]
and $\nu_{\textrm{B}5}$ is what we get by keeping only the leading $1$ of the hypergeometric factor and
solving $B_x(\alpha,\beta) = \tfrac{1}{2}B(\alpha,\beta)$.  For small $\alpha$, the hypergeometric factor is
close to $(1-x)^{-\alpha(1-\beta)}$, which has the same first-order term in $x$ to leading order in
$\alpha$, is exact at $\beta = 1$, and is right as $\beta \to 0$, where the factor approaches
$1 - \alpha\log(1-x)$.  With it, the median equation becomes $x/(1-x)^{1-\beta} = \nu_{\textrm{B}5}$.  In the
limit of small shape parameters, that says the odds $x/(1-x)$ of the median equal $\nu_{\textrm{B}5}$, so
that
\[
\nu_\textrm{B}(\alpha,\beta) \approx \frac{\nu_{\textrm{B}5}(\alpha,\beta)}{1 + \nu_{\textrm{B}5}(\alpha,\beta)} \, ,
\]
which tends to exactly $\tfrac{1}{2}$ on the diagonal.  Rather than solve the implicit equation for general
$\beta$, we use the explicit form
\begin{equation}
\frac{v}{(1+v)^{e(\beta)}} \, , \qquad e(\beta) = k(\beta)\,(1-\beta) \, ,
\label{eq:oddsform}
\end{equation}
with $v$ equal to $\nu_{\textrm{B}5}$ or an elementary approximation of it.  It reduces to $v$ at $\beta = 1$,
where $\nu_{\textrm{B}5}$ is exact, and to the odds form as $\beta \to 0$ if $k(0) = 1$.  For $\beta > 1$ the
exponent is negative, and the correction raises $v$, which is what $\nu_{\textrm{B}5}$ needs as
$\alpha \to 1$.

With $v = \nu_{\textrm{B}5}$ and $k = 1$, Equation~\eqref{eq:oddsform} is numerically a lower bound for
$\beta < 1$, with worst relative error $-3.0\%$ at $\alpha = \beta \approx 0.46$, and an upper bound for
$\beta > 1$, but too high there, by up to $19\%$ at $\alpha = 1$ for large $\beta$.  Choosing $k(\beta)$
to make the form exact on the edge of the low-$\alpha$ region (at $\alpha = \beta$ for $\beta < 1$, and at
$\alpha = 1$ for $\beta > 1$) gives a smooth $k$ that falls from $1$ at $\beta = 0$ through about $0.757$
at $\beta = 1$ to $2\log(2\log 2) \approx 0.653$ as $\beta \to \infty$, and keeps the relative error within
$\pm 2\%$ over the whole region $0 < \alpha \le \min(\beta, 1)$.

For elementary bounds, we replace $F$ by an elementary approximation $G$ that is exact on both edges
where the median is known, choose the exponent to make the form exact on the third, and then steer the
exponent to keep each bound on one side of the median.

Since $\nu_{\textrm{B}5}$ is exact at $\beta = 1$, $F(\alpha,1) = (1+\alpha)^{-1/\alpha}$, so we write
$\log F(\alpha,\beta) = -\log(1+\alpha)/\alpha + R(\alpha,\beta)$, where $R$ vanishes at $\beta = 1$.  Three
values of $R$ are known:  at $\alpha = 1$, $F(1,\beta) = 1/(1+\beta)$, so $R(1,\beta) = \log 2 - \log(1+\beta)$;
at $\alpha = 0$, $R(0,\beta) = 1 - H_\beta$, where $H_\beta = \gamma + \psi_0(\beta+1)$ is the harmonic number;
and the slope of $R$ at $\alpha = 0$ is $\pi^2/12 - \psi_1(\beta+1)/2 - \tfrac{1}{2}$
(Lemma~\ref{lem:logconc}).  We replace $R$ by the quadratic in $\alpha$ that matches them:
\[
\log G(\alpha,\beta) = -\frac{\log(1+\alpha)}{\alpha} + (1-\alpha)\,r_0(\beta) + \alpha\,r_1(\beta)
+ \alpha(1-\alpha)\,q(\beta) \, ,
\]
with $r_0 = 1 - \hat{H}(\beta)$, $r_1 = \log 2 - \log(1+\beta)$, and
$q = \pi^2/12 - \hat{\psi}_1(\beta)/2 - \tfrac{1}{2} - r_1 + r_0$.  Here $\hat{H}$ and $\hat{\psi}_1$ are the
lower partial sums of the asymptotic series used for $\nu_{\textrm{B}6}$ and $\nu_{\textrm{B}7}$, evaluated
at $w = \beta + 2$ and anchored at $\beta = 1$ by the exact values $H_1 = 1$ and
$\psi_1(2) = \pi^2/6 - 1$:
\begin{align*}
\hat{H}(\beta) &= \tfrac{3}{2} + S_0(\beta+2) - S_0(3) - \frac{1}{\beta+1} \, , &
S_0(w) &= \log w - \frac{1}{2w} - \frac{1}{12w^2} \, , \\
\hat{\psi}_1(\beta) &= \frac{\pi^2}{6} - \frac{5}{4} + S_1(\beta+2) - S_1(3) + \frac{1}{(\beta+1)^2} \, , &
S_1(w) &= \frac{1}{w} + \frac{1}{2w^2} + \frac{1}{6w^3} - \frac{1}{30w^5} \, .
\end{align*}
So $G$ is exact at $\beta = 1$ for every $\alpha$, and at $\alpha = 1$ for every $\beta$.  Elsewhere it is
within $-0.26\%$ to $+0.04\%$ of $F$ for $\beta < 1$, and within $-0.01\%$ to $+0.18\%$ for $\beta > 1$.

With $v = P\,G$, Equation~\eqref{eq:oddsform} is exact at $\beta = 1$, where the exponent is $0$.  The
exponents that make it exact on the other two edges have closed forms.  On the diagonal, where $P = 1$ and
the median is $\tfrac{1}{2}$, and at $\alpha = 1$, where $v = 1/(2\beta)$ and the median is $1 - 2^{-1/\beta}$,
they are
\[
e_\textrm{D}(\beta) = \frac{\log G(\beta,\beta) + \log 2}{\log\left(1 + G(\beta,\beta)\right)} \, , \qquad
e_1(\beta) = -\frac{\log\left(2\beta\left(1 - 2^{-1/\beta}\right)\right)}{\log\left(1 + \frac{1}{2\beta}\right)} \, ,
\]
and both vanish at $\beta = 1$.  With $e = e_\textrm{D}(\beta)$ for $\beta < 1$ and $e = e_1(\beta)$ for
$\beta > 1$, the form is exact on the lines $\alpha = 1$, $\beta = 1$, and $\alpha = \beta$, the same lines
as $\nu_{\textrm{B}4}$, and continuous across $\beta = 1$.  With no fitted constants, its relative error is
between $-0.70\%$ and $+0.13\%$ for $\alpha \le \beta < 1$, and between $-1.74\%$ and $0$ for
$\alpha \le 1 < \beta$, where it is a lower bound.

To make one-sided bounds, we steer the exponent by a factor that is $1$ on the edges, so that
exactness on the three lines is kept:
\begin{align*}
\nu_{\textrm{B}8}(\alpha,\beta) &= \frac{v}{(1+v)^{e_\textrm{D}(\beta)\,\left(1 - 0.22\,(1-\beta)(1 - \alpha/\beta)\right)}}
&& \textrm{for } \alpha \le \beta < 1 \, , \\
\nu_{\textrm{B}9}(\alpha,\beta) &= \frac{v}{(1+v)^{e_1(\beta)\,\left(1 + 0.2\,(1-\alpha)\,\beta/(\beta+2)\right)}}
&& \textrm{for } \alpha \le 1 < \beta \, ,
\end{align*}
with $v = P(\alpha,\beta)\,G(\alpha,\beta)$.  Both are elementary functions of $\alpha$ and $\beta$.  Lowering the
exponent raises the form, so a larger first coefficient raises $\nu_{\textrm{B}8}$ (where
$e_\textrm{D} > 0$), and a larger second one raises $\nu_{\textrm{B}9}$ (where $e_1 < 0$).  On the grids below,
$\nu_{\textrm{B}8}$ stays an upper bound for a first coefficient of at least $0.215$, and $\nu_{\textrm{B}9}$ a
lower bound for a second coefficient of at most $0.209$; we use $0.22$ and $0.2$, with a little margin.

\begin{figure}[!ht]
  \centering
  \includegraphics[width=0.8\linewidth]{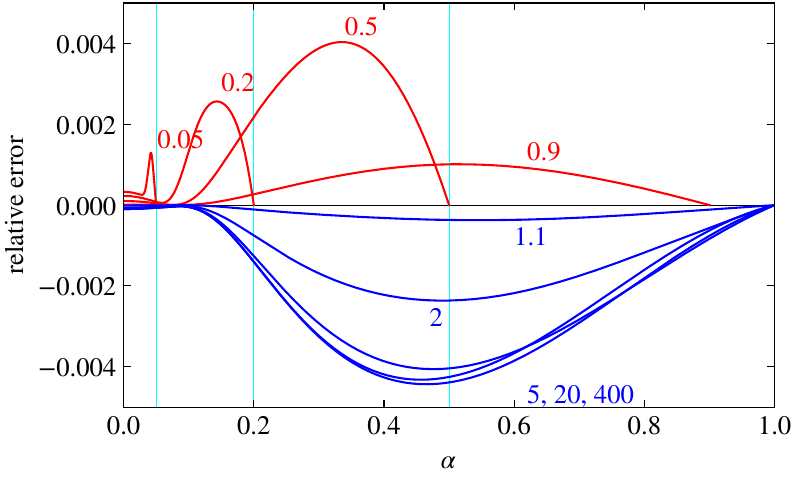}
    \caption{{\bf Relative errors of the odds-corrected low-\texorpdfstring{$\alpha$}{alpha} bounds.}
    The upper bound $\nu_{\textrm{B}8}(\alpha,\beta)$ (red) for $\beta = 0.05$, $0.2$, $0.5$, and $0.9$,
    each plotted up to $\alpha = \beta$, where it is exact, and the lower bound $\nu_{\textrm{B}9}(\alpha,\beta)$ (blue) for
    $\beta = 1.1$, $2$, $5$, $20$, and $400$, plotted up to $\alpha = 1$, where it is exact.  At $\alpha = 0.6$,
    the magnitude of the error of $\nu_{\textrm{B}9}$ increases with $\beta$; the curves for $5$, $20$, and
    $400$ lie close together.  Compare the errors of $\nu_{\textrm{B}5}$ in Figure~\ref{fig:rel5}, on a scale
    $150$ times larger.}
  \label{fig:relodds}
\end{figure}

\begin{figure}[!ht]
  \centering
  \includegraphics[width=0.57\linewidth]{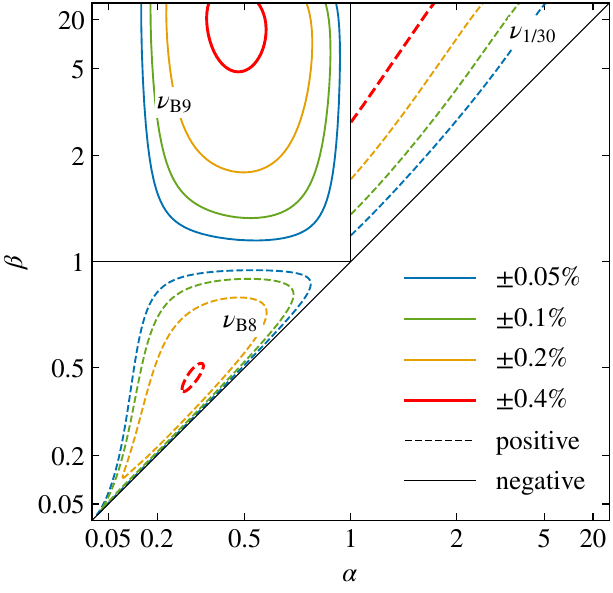}
    \caption{{\bf Relative error contours of the bounds \texorpdfstring{$\nu_{\textrm{B}8}$, $\nu_{\textrm{B}9}$, and $\nu_{1/30}$}{nu B8, nu B9, and nu 1/30}.}
    The upper bound $\nu_{\textrm{B}8}$ for $\alpha \le \beta < 1$, the lower bound $\nu_{\textrm{B}9}$ for
    $\alpha \le 1 < \beta$, and the upper bound $\nu_{1/30}$ of Section~\ref{sec:whole} for
    $1 \le \alpha \le \beta$.  Since these are one-sided bounds for $\alpha \le \beta$, the errors are shown
    only there, without reflection; the black lines mark the regions.  Contours at $\pm 0.05\%$, $\pm 0.1\%$,
    $\pm 0.2\%$, and $\pm 0.4\%$, colored as in the legend, dashed for positive and solid for negative errors.
    The contours are drawn separately in each region, since $\nu_{1/30}$ does not join $\nu_{\textrm{B}9}$ at
    $\alpha = 1$.}
  \label{fig:contourodds}
\end{figure}

Numerically, $\nu_{\textrm{B}8}$ is an upper bound on the median for $0 < \alpha \le \beta < 1$, with
relative error between $0$ and $+0.41\%$, and $\nu_{\textrm{B}9}$ is a lower bound for
$0 < \alpha \le 1 < \beta$, with relative error between $-0.44\%$ and $0$ (Figures~\ref{fig:relodds}
and~\ref{fig:contourodds}).  We checked them on logarithmic grids of $300$ values of $\beta$, from $0.001$ to
$0.99999$ and from $1.00001$ to $10^6$, with about $450$ values of $\alpha$ for each, concentrated toward
$\alpha = 0$ and toward the edge of the region, and confirmed the ranges with 30-digit medians
(Section~\ref{sec:methods}), including at $3{,}000$ random points in each region with $\beta$ from $10^{-6}$
to $1$ and from $1$ to $10^9$.  The worst error of $\nu_{\textrm{B}8}$ is near $(\alpha,\beta) = (0.32, 0.47)$;
that of $\nu_{\textrm{B}9}$ is near $\alpha \approx 0.47$ for $\beta \approx 16$.  For comparison, the provable
bounds $\nu_{\textrm{B}6}$ and $\nu_{\textrm{B}7}$ have worst errors of $-38\%$ and about $+100\%$ on the same
regions.  The bounding behavior of $\nu_{\textrm{B}8}$ and $\nu_{\textrm{B}9}$ rests on the numerical checks.
Figure~\ref{fig:contourodds} also shows the upper bound $\nu_{1/30}$ that Section~\ref{sec:whole} uses for
$1 \le \alpha \le \beta$.

\section{An approximation over the whole domain}
\label{sec:whole}

The odds-corrected forms of Section~\ref{sec:odds} and the ratio forms of Section~\ref{sec:asymexp} combine
into a single elementary-function approximation that is accurate over the whole domain.  All its pieces are
exact on the lines $\alpha = 1$, $\beta = 1$, and $\alpha = \beta$, where $\nu_{\textrm{B}4}$ is exact.

For $\alpha \le 1$, we keep the odds-corrected form of Section~\ref{sec:odds} but no longer ask for a
bound:  the steering coefficient becomes a fitted function $\kappa$ of the shape parameters, chosen to
minimize the worst relative error.  Any $\kappa$ keeps the form exact on the three lines, so $\kappa$ plays
the role that $c$ plays in the ratio forms.  For $1 \le \alpha \le \beta$, we interpolate between
$\nu_{\textrm{B}4}$ and the ratio form with $g(x) = x - \tfrac{1}{3} + c/x$ and $c = \tfrac{1}{30}$,
\[
\nu_{1/30}(\alpha,\beta) = \frac{\alpha - \frac{1}{3} + \frac{1}{30\alpha}}
{\alpha + \beta - \frac{2}{3} + \frac{1}{30\alpha} + \frac{1}{30\beta}} \, ,
\]
from the family of Section~\ref{sec:asymexp}.  The ratio form increases with $c$ for $\alpha < \beta$
(Section~\ref{sec:asymexp}), and numerically, for $1 \le \alpha \le \beta$, the value $c^*$ that makes it
exact is at most its limit at $\alpha = \beta = 1$,
$c^*(1,1) = (3 - 4\log 2)/(3(1 + 2\log 2)) \approx 0.03177$ \citep{lyon2026kerman}; so $\nu_{1/30}$ is an
upper bound there.  Its relative error is between $0$ and $+0.99\%$ (Figure~\ref{fig:contourodds}).  By
Conjecture~\ref{thm:B4bound}, $\nu_{\textrm{B}4}$ is a lower bound there, so the two bracket the median.  The ratio
form with $c = c_\textrm{E}$ of Section~\ref{sec:asymexp} is about as accurate in this region, but the interpolation
below is simpler, has no removable $0/0$ at $\alpha = 1$, and its error there is far below that of the
low-$\alpha$ regions.

The combined approximation is, for $\alpha \le \beta$,
\begin{align*}
\nu_{\textrm{B}10}(\alpha,\beta) &= \frac{v}{(1+v)^{e_\textrm{D}(\beta)\,\left(1 - \kappa_1(\beta)\,(1-\beta)(1 - \alpha/\beta)\right)}}
&& \textrm{for } \alpha \le \beta < 1 \, , \\
\nu_{\textrm{B}10}(\alpha,\beta) &= \frac{v}{(1+v)^{e_1(\beta)\,\left(1 + \kappa_2(\alpha)\,(1-\alpha)\,\beta/(\beta+1)\right)}}
&& \textrm{for } \alpha \le 1 < \beta \, , \\
\nu_{\textrm{B}10}(\alpha,\beta) &= \frac{\alpha - 1}{\alpha}\, \nu_{1/30}(\alpha,\beta) + \frac{1}{\alpha}\, \nu_{\textrm{B}4}(\alpha,\beta)
&& \textrm{for } 1 \le \alpha \le \beta \, ,
\end{align*}
with $\kappa_1(\beta) = 0.173 - 0.568\,\beta^2$ and $\kappa_2(\alpha) = 0.328 - 0.131\,\alpha$, $v = P\,G$, and
the exponents $e_\textrm{D}$ and $e_1$ of Section~\ref{sec:odds}.  (The steering factor for $\alpha \le 1 < \beta$
uses $\beta/(\beta+1)$, where $\nu_{\textrm{B}9}$ uses $\beta/(\beta+2)$; each was chosen for its own purpose,
the best fit here and a bound with some margin there.)  We extend $\nu_{\textrm{B}10}$ to
$\alpha > \beta$ by $\nu_{\textrm{B}10}(\alpha,\beta) = 1 - \nu_{\textrm{B}10}(\beta,\alpha)$.  Each piece is
exact on the lines that bound its region, so $\nu_{\textrm{B}10}$ is continuous, and exact on all three
lines.  The four constants in $\kappa_1$ and $\kappa_2$ were fitted to minimize the worst relative error; the
weight $(\alpha - 1)/\alpha$ on $\nu_{1/30}$ has no fitted constant.

\begin{figure}[!ht]
  \centering
  \includegraphics[width=0.8\linewidth]{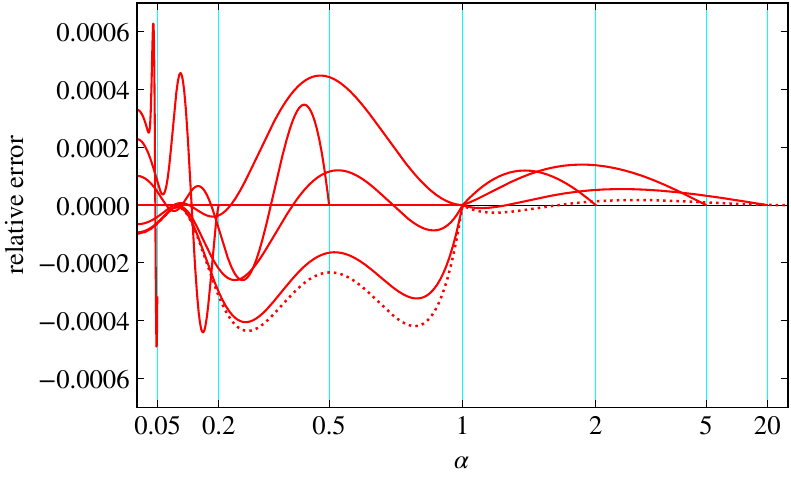}
    \caption{{\bf Relative error of the combined approximation \texorpdfstring{$\nu_{\textrm{B}10}$}{nu B10}.}
    For each $\beta$ marked on the axis, the error is plotted for $\alpha \le \beta$ on an
    $\arctan(\alpha)$ axis as in Figure~\ref{fig:rel3}; each curve is $0$ at $\alpha = 1$ and at $\alpha = \beta$,
    and very small for $\alpha > 1$.  The dotted curve is the limit $\beta \to \infty$:  the
    relative error of $\nu_{\Gamma 5}(\alpha) = \lim_{\beta\to\infty} \beta\,\nu_{\textrm{B}10}(\alpha,\beta)$ as an
    approximation to the gamma median $\nu_\Gamma(\alpha)$, for all $\alpha$ (Section~\ref{sec:gamma}), which
    is $0$ at $\alpha = 1$.  The spike at the end of the $\beta = 0.05$ curve is the abrupt approach to
    $\tfrac{1}{2}$ near the diagonal.}
  \label{fig:relwhole}
\end{figure}

\begin{figure}[!ht]
  \centering
  \includegraphics[width=0.6\linewidth]{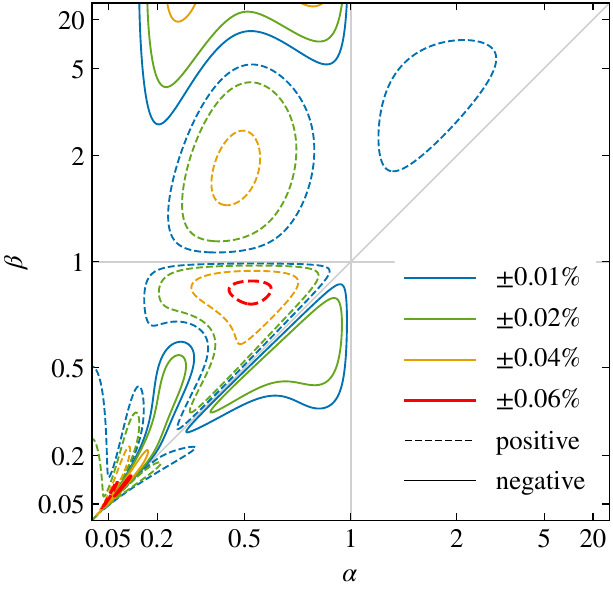}
    \caption{{\bf Relative error contours of \texorpdfstring{$\nu_{\textrm{B}10}$}{nu B10}.}
    Contours at $\pm 0.01\%$, $\pm 0.02\%$, $\pm 0.04\%$, and $\pm 0.06\%$, colored as in the legend,
    dashed for positive and solid for negative errors, with the errors reflected to $\alpha > \beta$ as in
    Figure~\ref{fig:contour1}.  The gray lines at $\alpha = 1$, $\beta = 1$, and $\alpha = \beta$, where the
    error is $0$, separate the regions.  In the high-$\alpha$ region the error is below $0.02\%$.}
  \label{fig:contourwhole}
\end{figure}

Table~\ref{tab:kappa} shows how the worst error falls with the number of fitted constants in $\kappa$.
Splitting $n$ constants between the two low-$\alpha$ regions as well as possible, the worst error over the
whole domain is $1.74\%$, $0.70\%$, $0.23\%$, $0.14\%$, and $0.065\%$ for $n = 0$ to $4$; $\nu_{\textrm{B}10}$
is the member with four.

\begin{table}[!ht]
\centering
\caption{Worst relative error of the steered odds-corrected form by the number of fitted constants in
$\kappa$, on the grids of Section~\ref{sec:odds}.}
\label{tab:kappa}
\small
\begin{tabular}{@{}clrlr@{}}
\toprule
Constants & $\kappa_1(\beta)$, for $\alpha \le \beta < 1$ & Error & $\kappa_2(\alpha)$, for $\alpha \le 1 < \beta$ & Error \\
\midrule
0 & $0$ & $0.70\%$ & $0$ & $1.74\%$ \\
1 & $0.119$ & $0.23\%$ & $0.259$ & $0.14\%$ \\
2 & $0.173 - 0.568\,\beta^2$ & $0.065\%$ & $0.328 - 0.131\,\alpha$ & $0.046\%$ \\
\bottomrule
\end{tabular}
\end{table}

Over the same grids as before, from $\beta = 0.001$ to $10^6$, the relative error of $\nu_{\textrm{B}10}$ is
between $-0.064\%$ and $+0.065\%$ for $\alpha \le \beta < 1$, between $-0.044\%$ and $+0.046\%$ for
$\alpha \le 1 < \beta$, and between $-0.003\%$ and $+0.016\%$ for $1 \le \alpha \le \beta$
(Figures~\ref{fig:relwhole} and~\ref{fig:contourwhole}).  As is typical of a minimax fit, the largest errors, of about $\pm 0.065\%$,
occur at several places in the region $\alpha \le \beta < 1$.  For smaller $\beta$, the error stays between $0$ and $+0.037\%$.  As both
shape parameters tend to $0$ with $1/\alpha - 1/\beta$ fixed, the logit of the median tends to
$(1/\beta - 1/\alpha)/2$; $P$ tends to $e^{(1/\beta - 1/\alpha)/2}$, and $G$ and the odds exponent tend to
nearly $1$, so the odds-corrected form tends to the median up to the factor $G(0,0) = e^{-\hat{H}(0)} \approx 1.00037$ on
its odds, which gives the $+0.037\%$.  So $\nu_{\textrm{B}10}$ is within $0.065\%$ of the median over the whole domain.

\section{New gamma median approximations}\label{sec:gamma}

In this section, we examine how the study of the median of the beta distribution contributes back to the problem of finding good bounds and approximations to the median of the gamma distribution.

Every ratio form has a corresponding gamma median approximation.  If $X \sim \textrm{Beta}(\alpha,\beta)$,
then $\beta X$ converges in distribution to $\textrm{Gamma}(\alpha,1)$ as $\beta \to \infty$, so
$\beta\,\nu_\textrm{B}(\alpha,\beta) \to \nu_\Gamma(\alpha)$; and if $g(x)/x \to 1$ as $x \to \infty$, then
$\beta\,g(\alpha)/(g(\alpha)+g(\beta)) \to g(\alpha)$.  A ratio form that is accurate for large $\beta$ therefore
comes with $g$ itself as an approximation to the gamma median.  We have already met an especially good one:
$\nu_{\Gamma 3}(x) = (x - \tfrac{1}{6})^2/x$, the $g$ of $\nu_{\textrm{B}3}$, overestimates the gamma median by
at most $0.212\%$ for $x \ge 1$.  But it falls to zero at $x = \tfrac{1}{6}$ and grows
without bound as $x \to 0$, while the true median goes to zero.  We are more interested here in an
approximation that has the right behavior over the whole range $0 < \alpha < \infty$, and that can serve as
the basis for still better approximations.

The beta median approximation $\nu_{\textrm{B}4}(\alpha,\beta)$, based on Lemma~\ref{lem:special}'s constraint on $g$ that makes it accurate at $\beta = 1$, combined with the known exact value of the median of the gamma distribution at $\alpha = 1$ (where it is the exponential distribution), $\nu_{\Gamma}(1) = \log 2$, suggests this new approximation for the median of the gamma distribution:
\[
\nu_{\Gamma 4}(\alpha)= \frac{\log 2}{2^{1/\alpha} - 1} \, ,
\]
which is consistent with writing $\nu_{\textrm{B}4}$ in terms of this gamma median approximation as
\[
\nu_{\textrm{B}4}(\alpha,\beta) = \frac{\nu_{\Gamma 4}(\alpha)}{\nu_{\Gamma 4}(\alpha) + \nu_{\Gamma 4}(\beta)} \, .
\]

Notice that the numerator value $\log 2$, besides making the approximation exact at $\alpha = 1$, is also the unique value that makes the slope approach 1 at high $\alpha$, like the slope of the true median, since $2^{1/\alpha}$ approaches $1 + \tfrac{\log 2}{\alpha}$.
Graphically, we observe that $\nu_{\Gamma 4}(\alpha)$ crosses the true value only once, at $\alpha = 1$.  Theorem~\ref{thm:gamma} proves that this gamma median approximation is a lower bound on the standard gamma distribution median $\nu_{\Gamma}(\alpha)$ where $\alpha > 1$, and an upper bound where $\alpha < 1$:
\[
\nu_{\Gamma 4}(\alpha) > \nu_{\Gamma}(\alpha) \quad \textrm{for} \quad 0 < \alpha < 1 \, ,
\quad \textrm{and} \quad \nu_{\Gamma 4}(\alpha) < \nu_{\Gamma}(\alpha) \quad \textrm{for} \; \alpha > 1 \, .
\]

At both ends, and at $\alpha = 1$, $\nu_{\Gamma 4}$ compares with the true median as follows.
At low $\alpha$, it is about 23\% high:
\[
\frac{\lim_{\alpha \to 0} 2^{1/\alpha} \nu_{\Gamma 4}(\alpha)}{\lim_{\alpha \to 0} 2^{1/\alpha} \nu_{\Gamma}(\alpha)} = \frac{\log 2}{e^{-\gamma}} \approx \frac{0.693147}{0.561459}\approx 1.2345 \, .
\]
At high $\alpha$, $\nu_{\Gamma 4}$ is asymptotic to $\alpha - \tfrac{\log 2}{2}$, about $0.0132403$ below the true $\alpha - \frac{1}{3}$ asymptote:
\[
\lim_{\alpha \to \infty} \left(\nu_{\Gamma 4}(\alpha) - \nu_{\Gamma}(\alpha) \right)=  -\frac{\log 2}{2} + \frac{1}{3} \approx -0.0132403 \,.
\]
At $\alpha = 1$, where the value is exact, the slope is just slightly below the correct slope \citep{lyon2023tight}:
\[
\frac{\nu_{\Gamma 4}^{\prime}(1)}{\nu_{\Gamma}^{\prime}(1)} = \frac{2 (\log 2)^2}{\gamma - 2 \textrm{Ei}(-\log 2) + \log \log 2} \approx \frac{0.9609}{0.9680} \approx 0.9926 \, .
\]

So $\nu_{\Gamma 4}$ is exact at $\alpha = 1$, is a bound on each side of it, and has the right form of
behavior at both ends:  a constant ratio to the true median as $\alpha \to 0$ and a constant offset from it
as $\alpha \to \infty$.  Its relative error is largest at low $\alpha$, and its absolute error is largest at
high $\alpha$.  That makes it a good base on which to build more accurate approximations.  A companion paper
\citep{lyon2026gamma} develops interpolated approximations built on such forms, first with every constant
set analytically from the median's known behavior at the two ends and at $\alpha = 1$, and then with fitted
constants that minimize the maximum of the absolute and relative errors, so that they work well at both low
and high $\alpha$.

The approximation $\nu_{\textrm{B}10}$ of Section~\ref{sec:whole} also yields a gamma median approximation,
$\nu_{\Gamma 5}(\alpha) = \lim_{\beta\to\infty} \beta\,\nu_{\textrm{B}10}(\alpha,\beta)$.  It has two pieces:
\[
\nu_{\Gamma 5}(\alpha) =
\begin{cases}
V(\alpha)\,(2\log 2)^{2V(\alpha)\left(1 + \kappa_2(\alpha)(1-\alpha)\right)} \, , & 0 < \alpha < 1 \, , \\[4pt]
\dfrac{\alpha - 1}{\alpha}\left(\alpha - \dfrac{1}{3} + \dfrac{1}{30\alpha}\right) + \dfrac{1}{\alpha}\,\nu_{\Gamma 4}(\alpha) \, ,
& \alpha \ge 1 \, ,
\end{cases}
\]
with $\kappa_2(\alpha) = 0.328 - 0.131\,\alpha$ from Section~\ref{sec:whole} and
\[
V(\alpha) = \left(2(1+\alpha)\right)^{-1/\alpha} \exp\left((1-\alpha)\,a + \alpha\log 2 + \alpha(1-\alpha)\,b\right) \, ,
\]
where $a = S_0(3) - \tfrac{1}{2} = \log 3 - \tfrac{73}{108} \approx 0.422686$ and
$b = S_0(3) + \tfrac{1}{2}S_1(3) - \tfrac{3}{8} - \log 2 \approx 0.052001$, with $S_0$ and $S_1$ as in
Section~\ref{sec:odds}.  The piece for $\alpha \ge 1$ is the limit of the interpolation between $\nu_{1/30}$
and $\nu_{\textrm{B}4}$, since each ratio form tends to its $g$.  For $\alpha < 1$, $\beta v = \beta P G$ tends
to $V(\alpha)$, and $e_1(\beta) \sim -2\beta\log(2\log 2)$, so the odds factor $(1+v)^{-e}$ tends to
$(2\log 2)^{2V(1 + \kappa_2(1-\alpha))}$.

At $\alpha = 1$, $V = \tfrac{1}{2}$, so both pieces equal $\log 2$, and $\nu_{\Gamma 5}$ is continuous and
exact there, though its slope jumps from about $0.9713$ to $0.9678$ across the true $0.9680$.  As
$\alpha \to 0$, $\nu_{\Gamma 5}(\alpha) \sim 2^{-1/\alpha} e^{a-1}$; since $a$ is close to $1 - \gamma$,
its ratio to the true median tends to $e^{a - 1 + \gamma} \approx 0.99990$, against $1.2345$ for
$\nu_{\Gamma 4}$.  As $\alpha \to \infty$, its absolute error vanishes, like
$\left(\tfrac{11}{30} - \tfrac{\log 2}{2} - \tfrac{8}{405}\right)/\alpha \approx 0.00034/\alpha$, by the
expansion $\nu_\Gamma(\alpha) = \alpha - \tfrac{1}{3} + \tfrac{8}{405\alpha} + O(\alpha^{-2})$ (Section~\ref{sec:asymexp}).  Its relative
error is between $-0.044\%$, near $\alpha = 0.27$, and $+0.002\%$, near $\alpha = 2.6$ (the dotted curve in
Figure~\ref{fig:relwhole}), and its absolute error is between $-0.00022$ and $+0.00005$, so the maximum of
its absolute and relative errors is $0.00044$ over all $\alpha$.

The constants $a$ and $b$ come from the beta median construction, and approximate
$1 - \gamma \approx 0.422784$ and $\pi^2/12 + \tfrac{1}{2} - \log 2 - \gamma \approx 0.052104$, the values
that match the true median's expansion at $\alpha = 0$ to first order.  With those exact values, and with
$\kappa(\alpha) = 0.33530 - 0.13485\,\alpha$ refitted for the gamma median, the maximum of the absolute and
relative errors falls to $0.00020$.

\section{Summary of contributions}

This article contributes the following potentially useful new results:

\begin{enumerate}
\item A rigorously proved tight lower bound and asymptote to the median of the beta distribution
in terms of the median of the gamma distribution (Theorems~\ref{thm:asymptote}
and~\ref{thm:lower_bound}):
\begin{itemize}
\item $\nu_\textrm{B}(\alpha,\beta) \ge \nu_{\textrm{B}0}(\alpha,\beta) =
\frac{\nu_\Gamma(\alpha)}{\nu_\Gamma(\alpha)+\nu_\Gamma(\beta)}$ for all $0 < \alpha \le \beta$
\item $\lim_{\beta\to\infty} \frac{\nu_{\textrm{B}0}(\alpha,\beta)}{\nu_\textrm{B}(\alpha,\beta)} = 1$
for all $\alpha > 0$
\end{itemize}
\item Elementary-function approximations to the median of the beta distribution, of
the form $g(\alpha)/(g(\alpha)+g(\beta))$, generalizing Kerman's approximation, where $g$ is an
approximation to the median of the gamma distribution:
\begin{itemize}
\item $\nu_{\textrm{B}2}(\alpha,\beta) = \frac{\alpha(1-\frac{1}{9\alpha})^3}
{\alpha(1-\frac{1}{9\alpha})^3 + \beta(1-\frac{1}{9\beta})^3}$ based on the Wilson--Hilferty
approximation, implicit in \citet{paulson1942approximate} but little known as an approximation to the
beta median
\item $\nu_{\textrm{B}3}(\alpha,\beta) = \frac{(\alpha - \frac{1}{6})^2/\alpha}
{(\alpha - \frac{1}{6})^2/\alpha + (\beta - \frac{1}{6})^2/\beta}$ based on the gamma median approximation $\nu_{\Gamma3}$, the member
$c = \tfrac{1}{36}$ of the family $g(x) = x - \tfrac{1}{3} + c/x$ (called $\nu_{1/36}$ in
\citet{lyon2026kerman}), with relative error between $-0.171\%$ and $+0.212\%$ for $1 \le \alpha \le \beta$
\item $\nu_{\textrm{B}4}(\alpha,\beta) = \frac{2^{1/\beta}-1}{2^{1/\alpha}+2^{1/\beta}-2}$ based on
the new gamma median approximation $\nu_{\Gamma4}$, exact when $\alpha = 1$, $\beta = 1$, or $\alpha = \beta$
\end{itemize}
\item Approximations to the median of the gamma distribution, as limits of beta median approximations
for large $\beta$ (Section~\ref{sec:gamma}):
\begin{itemize}
\item $\nu_{\Gamma3}(\alpha) = \frac{(\alpha - \frac{1}{6})^2}{\alpha}$, which overestimates the gamma
median by at most $0.212\%$ for $\alpha \ge 1$
\item $\nu_{\Gamma4}(\alpha) = \frac{\log 2}{2^{1/\alpha} - 1}$, exact at $\alpha = 1$,
rigorously proved to provide tight lower and upper bounds above and below $\alpha = 1$,
respectively (Theorem~\ref{thm:gamma}), with the right form of behavior at both ends
\item $\nu_{\Gamma5}(\alpha) = \lim_{\beta\to\infty} \beta\,\nu_{\textrm{B}10}(\alpha,\beta)$, a two-piece
elementary-function approximation, exact at $\alpha = 1$ and within $0.044\%$ of the gamma median for all
$\alpha > 0$
\end{itemize}
\item Proofs of the bounding behavior of the Kerman and Wilson--Hilferty forms:  $\nu_{\textrm{B}1}$
is a lower bound for $\tfrac{1}{3} < \alpha < \beta$ (Theorem~\ref{thm:kerman}), and $\nu_{\textrm{B}2}$ is an upper
bound on the edge $\alpha = 1$ (Theorem~\ref{thm:wh_edge}) and along every ray $\beta = r\alpha$ for
large enough $\alpha$ (Theorem~\ref{thm:whray}).  We conjecture that it is an upper bound for all
$1 \le \alpha < \beta$ (Conjecture~\ref{conj:wh}).
\item An explicit second-order asymptotic expansion of the logit of the beta median for large shape
parameters (Theorem~\ref{thm:largeexp}), also given in \citet{lyon2026kerman}.  It gives the second-order errors of all these
approximations, and shows that no form $g(\alpha)/(g(\alpha)+g(\beta))$ is accurate to second order
for all large shape parameters.
\item An edge-exact member of the family $g(x) = x - \tfrac{1}{3} + c/x$, with $c$ built from its exact
values on the edges and its ray asymptote, within $0.016\%$ of the median for $1 \le \alpha \le \beta$ with
no fitted constant (Section~\ref{sec:asymexp}).
\item A proof that no function $g$ used in the form $g(\alpha)/(g(\alpha)+g(\beta))$ will yield the
beta distribution median exactly, shown by comparing the value it would force at $(\alpha,\beta) = (\tfrac{1}{2}, 2)$ with the cubic
equation that the median satisfies there (Theorem~\ref{thm:non}).
\item An analysis and partial proof of the regional over- and under-estimation of the
$\nu_{\textrm{B}4}(\alpha,\beta)$ approximation (Conjecture~\ref{thm:B4bound}), proved for large shape
parameters (Theorem~\ref{thm:B4large}) and wherever Lemma~\ref{lem:B4partial} applies.
\item A low-$\alpha$ asymptote $\nu_{\textrm{B}5}(\alpha,\beta) = \left(\alpha B(\alpha,\beta)/2\right)^{1/\alpha}$
of the beta median, proved to be a lower bound for $\beta > 1$ and an upper bound for $\beta < 1$
(Theorems~\ref{thm:low} and~\ref{thm:lowasym}).  Written as $\nu_{\textrm{B}5} = P\,F$, with $P$
elementary, the factor $F$ can be replaced to give provable elementary-function bounds:
$\nu_{\textrm{B}6}$, a lower bound for $\beta > 1$, and $\nu_{\textrm{B}7}$, an upper bound for $\beta < 1$
(Lemma~\ref{lem:logconc} and Theorem~\ref{thm:lowbounds}).
\item Odds-corrected elementary-function bounds, built on an approximation to $F$ that is exact on the
edges $\alpha = 1$ and $\beta = 1$, and exact on the lines $\alpha = 1$, $\beta = 1$, and $\alpha = \beta$:
an upper bound $\nu_{\textrm{B}8}$ for $\alpha \le \beta < 1$ and a lower bound $\nu_{\textrm{B}9}$ for
$\alpha \le 1 < \beta$, within $0.41\%$ and $0.44\%$ of the median, verified numerically
(Section~\ref{sec:odds}).
\item An elementary-function approximation $\nu_{\textrm{B}10}$ over the whole domain, with four fitted
constants that steer the odds-corrected form for $\alpha < 1$, and an interpolation between
$\nu_{\textrm{B}4}$ and $\nu_{1/30}$ with no fitted constant for $\alpha \ge 1$.  It is exact on the lines
$\alpha = 1$, $\beta = 1$, and $\alpha = \beta$, and within $0.065\%$ of the median over the whole domain
(Section~\ref{sec:whole}).
\item A Lean~4 formalization that machine-checks most of the theorems above
(Section~\ref{app:lean} of the appendix).
\end{enumerate}

Several of these results are developed with more focus in papers spun out during this work.
\citet{lyon2026kerman} treats the family $g(x) = x - \tfrac{1}{3} + c/x$, its asymptotic bounds for
large shape parameters (Theorem~\ref{thm:cfamily}), the large-shape expansion, and $\nu_{\textrm{B}3}$
and $\nu_{\Gamma 3}$ in detail, and \citet{lyon2026gamma} builds analytic and optimized approximations to the
gamma median, including forms based on $\nu_{\Gamma 4}$, with constants set analytically or fitted.

\section{Open questions}

The main open questions are the three conjectures of the appendix:  that the Wilson--Hilferty form
$\nu_{\textrm{B}2}$ is an upper bound for $1 \le \alpha < \beta$ (Conjecture~\ref{conj:wh}), the uniform
bound on the remainder of the large-shape expansion that would imply it (Conjecture~\ref{conj:largeR}),
and the sign pattern of the errors of $\nu_{\textrm{B}4}$ (Conjecture~\ref{thm:B4bound}), which would
follow from a triangle inequality for the median odds (Lemma~\ref{lem:triangle}).  A proof of the last
would also confirm that $\nu_{\textrm{B}4}$ and $\nu_{1/30}$ bracket the median for $1 \le \alpha \le \beta$
(Section~\ref{sec:whole}).  The bounds $\nu_{\textrm{B}8}$ and $\nu_{\textrm{B}9}$ are verified only
numerically, and proofs of them would need bounds on the steered odds correction.

For approximation, the region $\alpha \le \beta < 1$ remains the hardest, because there the median moves
from near $0$ to $\tfrac{1}{2}$ over a range of $\alpha$ near $\beta$ that narrows as $\beta$ decreases.
More constants in the steering function $\kappa_1$ help only slowly there:  a third,
$\kappa_1(\beta) = 0.167 + 0.087\,\beta - 0.795\,\beta^2$, lowers the worst error only from $0.065\%$ to
$0.064\%$.  Other forms of the exponent may do better.

\section{Conclusions}

A simple formula that approximates the median of the beta distribution well over its whole domain
was the goal that started this work.  The approximation $\nu_{\textrm{B}10}$ of Section~\ref{sec:whole},
built from the low-$\alpha$ asymptote and the ratio forms, is exact on the lines $\alpha = 1$, $\beta = 1$, and
$\alpha = \beta$, like $\nu_{\textrm{B}4}$, and within $0.065\%$ of the median everywhere.  Along the way, we have shown how to make
excellent approximations for high $\beta$, for $\beta$ near 1, and for low $\alpha$.
While standard numerical methods, such as piecewise rational (minimax or Padé)
approximations, will be preferred in a computational context, the asymptotes, bounds, and
approximations presented herein may be of interest in shedding some light on the
behavior of the beta distribution median and its relationship to the gamma
distribution median.

\begin{appendix}

\section{Theorems, proofs, and formal verification}
\label{app:main}

The author originally proved several of the bound theorems by hand, and then used several AI systems to check, simplify, and improve the proofs, and to prove more bounds, over a period of several months.  Theorem~\ref{thm:lower_bound} was particularly resistant to proof, until Claude (Anthropic's AI desktop app) discovered the quantile pairing approach, which settled it.  As Claude describes it, ``The proof pairs the tails of one gamma variable at the quantiles of the other, a variant of the classical symmetric-quantile device \citep{vanzwet1979mean}.  Earlier attempts paired at points symmetric about 1, which is the classical form.''  This proof approach may be useful elsewhere in relation to medians.
Asked about novelty, Google DeepMind's AI co-mathematician (Gemini) replied:  ``To my knowledge,
while pairing symmetric quantiles of a single distribution is a standard trick (van Zwet), directly
using the symmetric quantiles of one variable as the integration support to probe the CDF of an
independent second variable to bound their ratio's median is indeed highly non-standard and
represents a genuinely novel structural approach.''  Neither system's literature knowledge is
exhaustive, so we offer this as an observation rather than a claim of priority.
Gemini also proposed a major simplification to Claude's proof of Lemma~\ref{lem:Lambda}, which Claude agreed was better and adopted.
The collaborative behavior of different AI systems was genuinely productive throughout the project.

The rest of this appendix is mostly as Claude drafted it.  The author has carefully reviewed it and made light edits, and has run the Lean~4 compilations to verify the formalizations.

This appendix collects the formal statements made in the paper, with proofs.  It covers the
high-$\beta$ asymptote, the lower bound $\nu_{\textrm{B}0} \le \nu_\textrm{B}$, the gamma median bounds,
the special case and non-existence results, the Kerman and Wilson--Hilferty bounds, large-shape asymptotics, the status
of the $\nu_{\textrm{B}4}$ conjecture, and the low-$\alpha$ asymptote with elementary bounds derived from it.  It also states three conjectures with the evidence for them.
Section~\ref{app:lean} describes the Lean~4 formalization, which machine-checks every theorem and
lemma here, with the exceptions named there.

\subsection{Notation and basic facts}
\label{app:notation}

For shape parameter $\alpha > 0$, $X_\alpha$ denotes a standard gamma variable, $\textrm{Gamma}(\alpha,1)$, and
variables with different subscripts are independent.  The gamma median is $\nu_\Gamma(\alpha)$, and
$\nu_\textrm{B}(\alpha,\beta)$ is the median of $\textrm{Beta}(\alpha,\beta)$.  Both distributions have
continuous, strictly increasing CDFs on their supports, so the medians are unique.  The link used
throughout is that
\[
\frac{X_\alpha}{X_\alpha + X_\beta} \sim \textrm{Beta}(\alpha,\beta) \, ,
\]
so for $s > 0$ the event $\{X_\alpha \le s X_\beta\}$ has the probability of the beta CDF at $s/(1+s)$.
In particular, the median odds
\[
m(\alpha,\beta) = \frac{\nu_\textrm{B}(\alpha,\beta)}{1 - \nu_\textrm{B}(\alpha,\beta)}
\]
is the median of the ratio $X_\alpha/X_\beta$.  We write $\operatorname{logit} p = \log(p/(1-p))$.
Each ratio-form approximation $\nu_{\textrm{B}k} = g(\alpha)/(g(\alpha)+g(\beta))$ has odds
$g(\alpha)/g(\beta)$.  Since $t \mapsto t/(1+t)$ is strictly increasing, comparing two such medians is
the same as comparing their odds.

The approximations used in this appendix are the following.  The ratio forms are
$\nu_{\textrm{B}0}$, with $g = \nu_\Gamma$; Kerman's $\nu_{\textrm{B}1}$, with $g(x) = x - \tfrac{1}{3}$;
the Wilson--Hilferty form $\nu_{\textrm{B}2}$, implicit in \citet{paulson1942approximate}, with $g(x) = \nu_{\Gamma2}(x) = x(1 - \tfrac{1}{9x})^3$;
$\nu_{\textrm{B}3}$, with $g(x) = \nu_{\Gamma3}(x) = (x - \tfrac{1}{6})^2/x$;
and $\nu_{\textrm{B}4}$, with $g(x) = \nu_{\Gamma4}(x) = \log 2/(2^{1/x} - 1)$.  The low-$\alpha$ asymptote is
\[
\nu_{\textrm{B}5}(\alpha,\beta) = \left(\frac{\alpha B(\alpha,\beta)}{2}\right)^{1/\alpha} ,
\]
and the elementary bounds $\nu_{\textrm{B}6}$ and $\nu_{\textrm{B}7}$ are defined in
Section~\ref{app:lowalpha}.  Here $\gamma$ is the Euler--Mascheroni constant and $\psi_n$ the polygamma
functions.  The odds-corrected bounds $\nu_{\textrm{B}8}$ and $\nu_{\textrm{B}9}$ and the combined
approximation $\nu_{\textrm{B}10}$ of the body are numerical results, not used here.

\begin{Lemma}[Basic facts]
\label{lem:basic}
For $\alpha, \beta > 0$:
$\nu_\textrm{B}(\beta,\alpha) = 1 - \nu_\textrm{B}(\alpha,\beta)$; $\nu_\textrm{B}(\alpha,\alpha) = \tfrac{1}{2}$;
$\nu_\textrm{B}(\alpha,1) = 2^{-1/\alpha}$; $\nu_\textrm{B}(1,\beta) = 1 - 2^{-1/\beta}$; and
$\nu_\Gamma(1) = \log 2$.
\end{Lemma}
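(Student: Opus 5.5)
The plan is to derive all five facts directly from the densities, using only the substitution $x \mapsto 1-x$ and two explicit integrations; the uniqueness of medians from the notation section then pins down each value.

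\emph{Symmetry.} For the first fact I will substitute $x = 1-y$ in the beta CDF integral. Since $B(\alpha,\beta) = B(\beta,\alpha)$, the density $p_\textrm{B}$ with parameters $(\alpha,\beta)$ at $x$ equals the density with parameters $(\beta,\alpha)$ at $1-x$. Hence
\[
\int_0^{1-\nu_\textrm{B}(\alpha,\beta)} p_\textrm{B}^{(\beta,\alpha)}(y)\,dy = \int_{\nu_\textrm{B}(\alpha,\beta)}^1 p_\textrm{B}^{(\alpha,\beta)}(x)\,dx = 1 - \tfrac12 = \tfrac12 ,
\]
so $1-\nu_\textrm{B}(\alpha,\beta)$ is a median of $\textrm{Beta}(\beta,\alpha)$. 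Uniqueness gives $\nu_\textrm{B}(\beta,\alpha) = 1-\nu_\textrm{B}(\alpha,\beta)$. Setting $\beta=\alpha$ gives $\nu_\textrm{B}(\alpha,\alpha) = 1-\nu_\textrm{B}(\alpha,\alpha)$, which is $\tfrac12$.

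\emph{Edge $\beta = 1$.} Here $B(\alpha,1) = \Gamma(\alpha)\Gamma(1)/\Gamma(\alpha+1) = 1/\alpha$, so the density is $\alpha x^{\alpha-1}$ and the CDF is $x^\alpha$. Solving $x^\alpha = \tfrac12$ gives the median $2^{-1/\alpha}$. The edge $\alpha=1$ then follows from symmetry: $\nu_\textrm{B}(1,\beta) = 1-\nu_\textrm{B}(\beta,1) = 1-2^{-1/\beta}$.

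\emph{Gamma at shape 1.} For shape $1$ the gamma distribution is exponential, with CDF $1-e^{-x}$. Setting this equal to $\tfrac12$ gives $x=\log 2$.

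No step is a real obstacle. The only point needing care is invoking uniqueness of the median, which the notation section already supplies through strictly increasing continuous CDFs. In a Lean formalization, the likely friction is the change of variables in the interval integral and the identity $B(\alpha,1)=1/\alpha$. I would handle the latter by integrating $x^{\alpha-1}$ directly rather than going through gamma functions.
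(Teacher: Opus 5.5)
Your proposal is correct and follows the paper's proof: the substitution $x = 1-y$ is exactly the fact that $1-X \sim \textrm{Beta}(\beta,\alpha)$, and you then use uniqueness for the diagonal, the CDF $x^\alpha$ for the edge $\beta = 1$, symmetry for the edge $\alpha = 1$, and the exponential distribution for $\nu_\Gamma(1) = \log 2$. The only difference is that you write out the change of variables and the value $B(\alpha,1) = 1/\alpha$, which the paper leaves implicit.
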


\begin{proof}
If $X \sim \textrm{Beta}(\alpha,\beta)$, then $1 - X \sim \textrm{Beta}(\beta,\alpha)$, which gives the
first identity.  With $\beta = \alpha$, uniqueness of the median gives the second.  The
$\textrm{Beta}(\alpha,1)$ CDF is $x^\alpha$, which equals $\tfrac{1}{2}$ at $x = 2^{-1/\alpha}$; the fourth
identity follows from the first.  $\textrm{Gamma}(1,1)$ is exponential, with median $\log 2$.
\end{proof}

\subsection{The high-\texorpdfstring{$\beta$}{beta} asymptote}
\label{app:asymptote}

\begin{Theorem}[High-$\beta$ asymptote]
\label{thm:asymptote}
For every $\alpha > 0$,
\[
\lim_{\beta \to \infty} \frac{\nu_{\textrm{B}0}(\alpha,\beta)}{\nu_\textrm{B}(\alpha,\beta)} = 1 \, ,
\qquad \nu_{\textrm{B}0}(\alpha,\beta) = \frac{\nu_\Gamma(\alpha)}{\nu_\Gamma(\alpha) + \nu_\Gamma(\beta)} \, .
\]
\end{Theorem}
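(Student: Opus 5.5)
The plan is to show separately that $\beta\,\nu_\textrm{B}(\alpha,\beta) \to \nu_\Gamma(\alpha)$ and $\beta\,\nu_{\textrm{B}0}(\alpha,\beta) \to \nu_\Gamma(\alpha)$ as $\beta \to \infty$. Since $\nu_\Gamma(\alpha) > 0$ is fixed, dividing the two limits gives the ratio limit $1$.

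For the second limit, I write
\[
\beta\,\nu_{\textrm{B}0}(\alpha,\beta) = \nu_\Gamma(\alpha)\,\frac{\beta}{\nu_\Gamma(\alpha)+\nu_\Gamma(\beta)} ,
\]
so it suffices that $\nu_\Gamma(\beta)/\beta \to 1$. I would get this from the bounds $\beta - \tfrac13 < \nu_\Gamma(\beta) < \beta$ of \citet{chen1986bounds} and \citet{berg2006chen}, cited in the paper for large $\beta$. Alternatively, it follows self-containedly from Chebyshev's inequality applied to $X_\beta/\beta$, which has mean $1$ and variance $1/\beta$. For any $\epsilon>0$ and large $\beta$, this gives $P(X_\beta \le (1-\epsilon)\beta) < \tfrac12$ and $P(X_\beta \le (1+\epsilon)\beta) > \tfrac12$, which pins the median in $[(1-\epsilon)\beta,(1+\epsilon)\beta]$.

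For the first limit, I use the representation of the notation section: $\nu_\textrm{B}(\alpha,\beta)$ is the median of $X_\alpha/(X_\alpha+X_\beta)$. Hence $\beta\nu_\textrm{B}$ is the median of
\[
Z_\beta = \frac{X_\alpha}{X_\alpha/\beta + X_\beta/\beta}.
\]
By the law of large numbers, or the Chebyshev bound above, $X_\beta/\beta \to 1$ in probability, and $X_\alpha/\beta \to 0$ in probability. Slutsky's theorem then gives $Z_\beta \to X_\alpha$ in distribution. I will realize all $X_\beta$ on one space as independent of $X_\alpha$; only the distributions matter.

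The step needing care is passing from convergence in distribution to convergence of medians. This works because the limiting CDF $F_\alpha$ of $\textrm{Gamma}(\alpha,1)$ is continuous and strictly increasing at $\nu_\Gamma(\alpha)$. Concretely, for $\epsilon>0$ set $t_\pm = \nu_\Gamma(\alpha)\pm\epsilon$, which are continuity points with $F_\alpha(t_-)<\tfrac12<F_\alpha(t_+)$. The CDF of $Z_\beta$ converges at $t_\pm$, so for large $\beta$ we get $P(Z_\beta\le t_-)<\tfrac12<P(Z_\beta\le t_+)$. The unique median of $Z_\beta$ then lies in $(t_-,t_+)$.

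If a more explicit, Lean-friendly argument is preferred, I would instead bound $P(Z_\beta\le t)$ directly. On the event $|X_\beta/\beta-1|\le\delta$, we have $X_\alpha/(1+\delta+X_\alpha/\beta)\le Z_\beta\le X_\alpha/(1-\delta)$. Conditioning on $X_\beta$ and using independence gives two-sided CDF bounds in terms of $F_\alpha$ at slightly perturbed arguments, plus an error term $P(|X_\beta/\beta-1|>\delta)\le 1/(\beta\delta^2)$. This routine sandwich is the main obstacle only in bookkeeping. Combining the two limits completes the proof.
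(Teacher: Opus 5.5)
Your proposal is correct and takes the same route as the paper. Both show $\beta\,\nu_{\textrm{B}}(\alpha,\beta)\to\nu_\Gamma(\alpha)$ by Chebyshev plus Slutsky and the strict monotonicity of the limiting gamma CDF at its median, and both get $\nu_\Gamma(\beta)/\beta\to1$ from Chebyshev (the paper uses the window $\beta\pm2\sqrt\beta$). Your sandwich alternative matches the paper's remark that the Lean version avoids weak convergence, though that version uses Chernoff tail bounds and takes one side from Theorem~\ref{thm:lower_bound}.
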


\begin{proof}
Let $Z = X_\alpha/(X_\alpha + X_\beta)$, so the median of $\beta Z$ is $\beta\,\nu_\textrm{B}(\alpha,\beta)$, and
\[
\beta Z = \frac{X_\alpha}{(X_\alpha + X_\beta)/\beta} \, .
\]
The denominator has mean $(\alpha+\beta)/\beta \to 1$ and variance $(\alpha+\beta)/\beta^2 \to 0$, so by
Chebyshev's inequality it converges to $1$ in probability.  By Slutsky's theorem $\beta Z$ converges
in distribution to $X_\alpha$.  The limit CDF $G$ is continuous and strictly increasing on $(0,\infty)$,
so for any $\varepsilon > 0$, $G(\nu_\Gamma(\alpha) - \varepsilon) < \tfrac{1}{2} < G(\nu_\Gamma(\alpha) + \varepsilon)$, and the same
strict inequalities hold for the CDF of $\beta Z$ once $\beta$ is large.  Hence
$\beta\,\nu_\textrm{B}(\alpha,\beta) \to \nu_\Gamma(\alpha)$.

For the approximation, Chebyshev's inequality for $X_\beta$, with mean and variance $\beta$, gives
$P(|X_\beta - \beta| \ge 2\sqrt\beta) \le \tfrac{1}{4}$.  So
$\nu_\Gamma(\beta) \in [\beta - 2\sqrt\beta,\, \beta + 2\sqrt\beta]$, and $\nu_\Gamma(\beta)/\beta \to 1$.  Then
\[
\beta\,\nu_{\textrm{B}0}(\alpha,\beta) = \frac{\nu_\Gamma(\alpha)\,\beta}{\nu_\Gamma(\alpha) + \nu_\Gamma(\beta)} \to \nu_\Gamma(\alpha) \, ,
\]
and the ratio of the two medians tends to~1.
\end{proof}

The Lean proof is organized differently, to avoid weak convergence.  One side is
$\nu_{\textrm{B}0} < \nu_\textrm{B}$ (Theorem~\ref{thm:lower_bound}).  The other side uses Chernoff
bounds for the gamma tails, $P(X_\beta \le \beta(1-\delta)) \le e^{-\beta\delta^2/4}$ and
$P(X_\beta > \beta(1+\delta)) \le e^{-\beta\delta^2/8}$, to show that eventually
$\nu_\textrm{B}(\alpha,\beta) \le (1+\varepsilon)\,\nu_\Gamma(\alpha)/\beta$ and $\nu_\Gamma(\beta) \le (1+\varepsilon)\beta$.

\subsection{The lower bound \texorpdfstring{$\nu_{\textrm{B}0} \le \nu_\textrm{B}$}{nu B0 <= nu B}}
\label{app:lower}

\begin{Theorem}[Lower bound]
\label{thm:lower_bound}
For $0 < \alpha \le \beta$,
\[
\nu_{\textrm{B}0}(\alpha,\beta) = \frac{\nu_\Gamma(\alpha)}{\nu_\Gamma(\alpha) + \nu_\Gamma(\beta)} \le \nu_\textrm{B}(\alpha,\beta) \, ,
\]
with equality if and only if $\alpha = \beta$.
\end{Theorem}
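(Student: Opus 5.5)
The plan is to reduce the statement to a pointwise inequality, using the quantile pairing device mentioned at the start of this appendix. Write $m_\alpha = \nu_\Gamma(\alpha)$, $m_\beta = \nu_\Gamma(\beta)$ and $s = m_\alpha/m_\beta$. By the link in Section~\ref{app:notation}, $\nu_{\textrm{B}0} \le \nu_\textrm{B}$ is equivalent to $s \le m(\alpha,\beta)$, the median odds. That in turn is equivalent to
\[
\Pr(X_\alpha \le s X_\beta) \le \tfrac{1}{2} .
\]
Let $F_\alpha, F_\beta$ be the gamma CDFs and $Q_\beta = F_\beta^{-1}$. Conditioning on $X_\beta = Q_\beta(U)$ with $U$ uniform gives
\[
\Pr(X_\alpha \le s X_\beta) = \int_0^1 F_\alpha\bigl(s\,Q_\beta(u)\bigr)\,du
= \int_0^{1/2} \Bigl[F_\alpha\bigl(s\,Q_\beta(u)\bigr) + F_\alpha\bigl(s\,Q_\beta(1-u)\bigr)\Bigr] du .
\]
So it suffices to prove the paired inequality
\[
F_\alpha(s\,q_1) + F_\alpha(s\,q_2) \le 1 = F_\beta(q_1) + F_\beta(q_2)
\]
for every pair $q_1 < m_\beta < q_2$ with $F_\beta(q_1) + F_\beta(q_2) = 1$, that is, $q_1 = Q_\beta(u)$ and $q_2 = Q_\beta(1-u)$. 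The inequality should be strict for each $u \in (0,\tfrac12)$ when $\alpha < \beta$, and that strictness gives the equality case. When $\alpha = \beta$ we have $s = 1$, and the two sides coincide by Lemma~\ref{lem:basic}.

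To prove the paired inequality, rescale to unit medians. Put $Y_\alpha = X_\alpha/m_\alpha$ and $Y_\beta = X_\beta/m_\beta$, with CDFs $G_\alpha, G_\beta$, and define
\[
h = G_\alpha \circ G_\beta^{-1} \quad \text{on } [0,1], \qquad D(u) = h(u) + h(1-u) - 1 .
\]
Then $h(0) = 0$, $h(\tfrac12) = \tfrac12$ and $h(1) = 1$, so $D(0) = D(\tfrac12) = 0$. We must show $D < 0$ on $(0,\tfrac12)$. For this I would study
\[
D'(u) = h'(u) - h'(1-u), \qquad h'(u) = \frac{g_\alpha(t)}{g_\beta(t)}, \quad t = G_\beta^{-1}(u),
\]
where $g_\alpha, g_\beta$ are the scaled densities. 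The goal is a single sign change of $D'$ on $(0,\tfrac12)$, from negative to positive. With the two zeros of $D$ at the ends, that forces $D < 0$ in between.

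The density ratio is explicit: $g_\alpha(t)/g_\beta(t) = C\, t^{\alpha-\beta} e^{(m_\beta - m_\alpha)t}$. Its logarithm, $(\alpha-\beta)\log t + (m_\beta - m_\alpha)t$, is convex in $t$ and in $\log t$. It decreases and then increases, because $m_\beta - m_\alpha > 0$ by monotonicity of the gamma median. So comparing $h'$ at the paired points $t_1 < 1 < t_2$ becomes a comparison of this log-ratio at $t_1$ and $t_2$. The needed behaviour of $t_1 \mapsto t_2$ comes from the relation $G_\beta(t_1) + G_\beta(t_2) = 1$, together with the known skewness and convex-transform ordering of the gamma family in the sense of \citet{vanzwet1979mean}. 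That ordering says $Q_\beta \circ F_\alpha$ is convex for $\alpha \le \beta$.

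The main obstacle is this sign-pattern step. If the direct single-crossing argument stalls, I would fall back on the convex transform order as follows. Convexity of $\varphi = Q_\beta \circ F_\alpha$ with $\varphi(m_\alpha) = m_\beta$ gives, at paired quantile levels $u$ and $1-u$,
\[
\varphi\bigl(Q_\alpha(u)\bigr) + \varphi\bigl(Q_\alpha(1-u)\bigr) \ge \text{(corresponding value of the chord through } m_\alpha) .
\]
I would then try to turn this chord inequality into the statement that the upper tail of $sY_\beta$ is thinner, relative to its lower tail, than that of $Y_\alpha$, which is the paired inequality after undoing the rescaling. Strict convexity for $\alpha < \beta$ would supply the strict inequality and hence the equality case.
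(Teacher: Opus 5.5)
Your setup is the paper's: the reformulation $\Pr(X_\alpha\le sX_\beta)<\tfrac12$, the pairing of $Q_\beta(u)$ with $Q_\beta(1-u)$, and a function $D$ that vanishes at both ends of $[0,\tfrac12]$. Your $D$ is the paper's with the sign flipped and $u=\tfrac12-t$. You also correctly note that a single sign change of $D'$, from negative to positive, would finish the proof. Taking $m_\beta>m_\alpha$ as known is fine; the paper instead derives $s<1$ inside the proof.

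The gap is that the single sign change is the whole difficulty, and you do not establish it. Let $t_1<1<t_2$ be the paired rescaled quantiles and $c=m_\beta-m_\alpha$. The sign of $D'(u)$ is that of
\[
(\beta-\alpha)\log\frac{t_2}{t_1}-c\,(t_2-t_1)
=\log\frac{t_2}{t_1}\Bigl[(\beta-\alpha)-c\,\ell(u)\Bigr] ,
\qquad
\ell(u)=\frac{t_2-t_1}{\log(t_2/t_1)} .
\]
So you need the logarithmic mean $\ell$ of paired quantiles of the single law $\textrm{Gamma}(\beta)$ to be strictly monotone along the pairing, increasing as the pair spreads apart. Convexity of the log density ratio in $t$ holds for any pairing whatever, so it cannot supply this. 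The fact needed is about how fast $t_1\to0$ compared with $t_2\to\infty$.

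This is the paper's Lemma~\ref{lem:logmean}, and it is not routine. Take unscaled quantiles $x>y$ and $d=\log(x/y)$. Differentiating $\ell$ reduces the claim to $x-y>\beta d-\Lambda(d)$, with $\Lambda(d)=\log\bigl((e^d(d-1)+1)/(e^d-1-d)\bigr)$. The paper proves this from the concavity of $\Lambda$ (Lemma~\ref{lem:Lambda}), together with an auxiliary function $S(\delta)$ that increases to the limit $1$.

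Your fallback does not close the gap either. First, the direction is reversed. For $\alpha<\beta$ the shape-$\alpha$ law is the more right-skewed one, so it is $Q_\alpha\circ F_\beta$ that is convex, and $\varphi=Q_\beta\circ F_\alpha$ is concave.

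More seriously, convexity of a transform that fixes the common median cannot decide the paired inequality. The convex transform order is invariant under increasing affine maps of either variable, but the sign of $\Pr(X\le Y)-\tfrac12$ is not. Take $X=m+K(Y-m)$ with $K>1$. Then $X$ and $Y$ share the median $m$ and are equivalent in that order, yet $\Pr(X\le Y)$ can lie on either side of $\tfrac12$. It exceeds $\tfrac12$, for instance, when the lower half of $Y$ is uniform on $[m-\epsilon,m]$ and its upper tail is $\Pr(Y>m+z)=\tfrac12(1+z)^{-a}$ with $a<1$.

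So the chord inequality cannot be turned into the tail comparison you want. Some quantitative, gamma-specific property of the quantile pairing, such as Lemma~\ref{lem:logmean}, has to enter the argument.
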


\begin{proof}
At $\alpha=\beta$, both sides equal $\tfrac{1}{2}$ (Lemma~\ref{lem:basic}).
So let $0<\alpha<\beta$; we prove the strict inequality.

\emph{Reformulation.}
Let $X = X_\alpha$ and $Y = X_\beta$, with CDFs $F_X,F_Y$ and PDFs $f_X,f_Y$, and let
$Q_Y=F_Y^{-1}:(0,1)\to(0,\infty)$ be the quantile function of $Y$, so that
$Q_Y(\tfrac{1}{2})=\nu_\Gamma(\beta)$.  Put
\[
s=\frac{\nu_\Gamma(\alpha)}{\nu_\Gamma(\beta)} \,.
\]
Since $\{X/(X+Y)\le\nu_{\textrm{B}0}\}=\{X\le sY\}$, and the beta CDF is strictly increasing on $(0,1)$,
the theorem is equivalent to
\begin{equation}\label{eq:t2goal}
\mathbb{P}(X\le sY)<\tfrac{1}{2} \,.
\end{equation}

\emph{Step 1:  pairing the quantiles of $Y$.}
Conditioning on $Y$, and writing $Y=Q_Y(r)$ with $r$ uniform on $(0,1)$,
\[
\mathbb{P}(X\le sY)=\int_0^1 F_X\big(s\,Q_Y(r)\big)\,dr \,.
\]
Pairing $r=\tfrac{1}{2} + t$ with $r=\tfrac{1}{2} - t$ gives
\begin{equation}\label{eq:t2pair}
\mathbb{P}(X\le sY)=\tfrac{1}{2} - \int_0^{1/2}D(t)\,dt \,,
\end{equation}
where
\[
D(t)=1-F_X(s\,x_t)-F_X(s\,y_t) \,,
\]
with $x_t=Q_Y(\tfrac{1}{2} + t)$ and $y_t=Q_Y(\tfrac{1}{2} - t)$, so that $0<y_t<x_t$ for $0<t<\tfrac{1}{2}$.
$D$ is continuous on $[0,\tfrac{1}{2})$ and differentiable on $(0,\tfrac{1}{2})$.
At the left end,
$D(0)=1-2F_X\big(s\,\nu_\Gamma(\beta)\big)=1-2F_X\big(\nu_\Gamma(\alpha)\big)=0$.
At the right end, $x_t\to\infty$ and $y_t\to0$, so $D(t)\to0$ as $t\to\tfrac{1}{2}$.
By \eqref{eq:t2pair}, it suffices to show $D(t)>0$ for $0<t<\tfrac{1}{2}$.

\emph{Step 2:  the sign of $D'$.}
Since $dx_t/dt=1/f_Y(x_t)$ and $dy_t/dt=-1/f_Y(y_t)$,
\[
D'(t)=\varrho(y_t)-\varrho(x_t) \,,
\]
where
\[
\varrho(y)=\frac{s\,f_X(sy)}{f_Y(y)}=K\,y^{\alpha-\beta}e^{(1-s)y}
\]
with a constant $K>0$.  With $d_t=\log(x_t/y_t)>0$, this gives
\begin{equation}\label{eq:t2rho}
\log\frac{\varrho(y_t)}{\varrho(x_t)}=(\beta-\alpha)\,d_t-(1-s)(x_t-y_t) \,.
\end{equation}

First, $s<1$; that is, $\nu_\Gamma(\alpha)<\nu_\Gamma(\beta)$.
For if $s\ge1$, the right side of \eqref{eq:t2rho} is positive for every $t$.
Then $D$ is strictly increasing on $[0,\tfrac{1}{2})$, so $D(t)>D(0)=0$ for $t>0$.
That contradicts $D(t)\to0$ as $t\to\tfrac{1}{2}$.

With $s<1$, the right side of \eqref{eq:t2rho} equals $(1-s)\,d_t\,\big(q_0-\ell(t)\big)$, where
\[
q_0=\frac{\beta-\alpha}{1-s} \,, \qquad
\ell(t)=\frac{x_t-y_t}{\log(x_t/y_t)}
\]
($\ell$ is the logarithmic mean of $x_t$ and $y_t$).  Hence
$\operatorname{sign}D'(t)=\operatorname{sign}\big(q_0-\ell(t)\big)$.
By Lemma~\ref{lem:logmean} below, $\ell$ is strictly increasing on $(0,\tfrac{1}{2})$.
Now fix $t\in(0,\tfrac{1}{2})$.
\begin{itemize}
\item If $\ell(t)\le q_0$, then $\ell(u)<q_0$ for $0<u<t$.  So $D$ is strictly increasing on $[0,t]$,
and $D(t)>D(0)=0$.
\item If $\ell(t)>q_0$, then $\ell(u)>q_0$ for $t<u<\tfrac{1}{2}$.  So $D$ is strictly decreasing on
$[t,\tfrac{1}{2})$, and $D(t)>\lim_{u\to1/2}D(u)=0$.
\end{itemize}
Either way $D(t)>0$, which proves \eqref{eq:t2goal}.
\end{proof}

The proof rests on two lemmas:  one about a single gamma distribution, and one about an elementary
function.

\begin{Lemma}[Log-mean of symmetric quantiles]
\label{lem:logmean}
Let $\beta>0$, and let $Y\sim\textrm{Gamma}(\beta,1)$ have CDF $F$, PDF $f$ and quantile function $Q$.
Then
\[
\ell(t)=\frac{Q(\tfrac{1}{2} + t)-Q(\tfrac{1}{2} - t)}{\log\big(Q(\tfrac{1}{2} + t)/Q(\tfrac{1}{2} - t)\big)}
\]
is strictly increasing on $(0,\tfrac{1}{2})$.
\end{Lemma}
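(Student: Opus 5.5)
The plan is to differentiate $\ell$ along the curve of symmetric quantile pairs, reduce the sign of $\ell'(t)$ to a single inequality between the density ratio $f(y_t)/f(x_t)$ and an elementary function of $(x_t,y_t)$, and then prove that inequality using the constraint $F(x_t)+F(y_t)=1$. Throughout, write $x=x_t=Q(\tfrac12+t)$, $y=y_t=Q(\tfrac12-t)$ and $L=\log(x/y)>0$. As in Step~2 of the proof of Theorem~\ref{thm:lower_bound}, $dx/dt=1/f(x)$ and $dy/dt=-1/f(y)$. Both partial derivatives of the logarithmic mean are positive:
\[
\partial_x \ell=\frac{1-\ell/x}{L},\qquad \partial_y\ell=\frac{\ell/y-1}{L},
\]
using $y<\ell<x$. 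Hence
\[
\ell'(t)=\frac{\partial_x\ell}{f(x)}-\frac{\partial_y\ell}{f(y)},
\]
and $\ell'(t)>0$ is equivalent to
\[
\rho(t):=\frac{f(y)}{f(x)}=\Bigl(\frac{y}{x}\Bigr)^{\beta-1}e^{x-y}\;>\;\frac{x(\ell-y)}{y(x-\ell)} .
\]
After clearing the logarithmic mean, this becomes the equivalent inequality
\[
\Bigl(\frac{y}{x}\Bigr)^{\beta}e^{x-y}\;>\;R(x,y):=\frac{x-y-yL}{xL-x+y},
\]
in which both the numerator and the denominator of $R$ are positive because $1-y/x<L<x/y-1$.

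Both sides tend to $1$ as $t\to0$. So the content of the lemma is the strict inequality $\rho>R$ along the whole quantile curve. It cannot hold for arbitrary pairs $y<x$: with $x-y$ small and $x/y$ large, the left side is tiny. The quantile constraint must therefore enter. I would pass to log coordinates $a=\log x$, $b=\log y$. There $U=\log Y$ has the log-concave, left-skewed density $h(u)\propto e^{\beta u-e^u}$, and $(a,b)$ are symmetric quantiles of $U$. The inequality then reads $h(b)/h(a)>R$. In the midpoint and half-width coordinates $m=(a+b)/2$, $\delta=(a-b)/2$, one has $\log\ell=m+\log(\sinh\delta/\delta)$, so the claim becomes
\[
\frac{dm}{dt}>-\frac{d}{dt}\log\frac{\sinh\delta}{\delta}.
\]
The left skewness of $U$ makes $m$ decrease, and the task is to show it decreases more slowly than the convex correction grows.

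The main obstacle is exactly this comparison. My first attempt would be a first-crossing argument. Let $t_1$ be the first $t$ at which $\rho=R$ (if any). Rewrite $R$ as an elementary function of $(x/y,\;x-y)$, and show that on the set $\{\rho\le R\}$ the symmetric-quantile curve, whose slope in the $(x,y)$ plane is $-f(y)/f(x)=-\rho$, cannot enter. This amounts to comparing $\rho$ with the slope of the level curves of $\rho/R$ at a crossing point. The comparison should be a purely elementary inequality in two variables, in which $\beta$ appears only through the factor $(y/x)^\beta$. I would isolate that as a separate elementary lemma, which is where I expect Lemma~\ref{lem:Lambda} to be needed.

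If the crossing argument stalls, the fallback is to use the quantile constraint quantitatively. Integrating $F(x)+F(y)=1$ gives
\[
\int_y^{\nu_\Gamma(\beta)} f=\int_{\nu_\Gamma(\beta)}^x f.
\]
Combining this with the log-concavity of $h$ bounds $a-\log\nu_\Gamma(\beta)$ against $\log\nu_\Gamma(\beta)-b$. That yields a lower bound on $x-y$ in terms of $L$ and $\beta$, and substituting it into $(y/x)^\beta e^{x-y}$ should reduce $\rho>R$ to a one-variable inequality in $L$. I would then check this inequality by series at small $L$ and by monotonicity at large $L$.
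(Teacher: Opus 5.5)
Your reduction is right, and it matches the paper's: with $x=e^{L}y$ one has $x-y-yL=y\,b(L)$ and $xL-x+y=y\,a(L)$ in the notation of Lemma~\ref{lem:Lambda}. So $R=e^{-\Lambda(L)}$, and $\rho>R$ is exactly the paper's condition $x-y>\beta L-\Lambda(L)$. Equivalently, $y>y_c(L)=(\beta L-\Lambda(L))/(e^{L}-1)$ when $\beta L>\Lambda(L)$, and the condition is automatic otherwise.

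\textbf{The gap.} The proposal stops where the content of the lemma begins, and your preferred route cannot work. A local first-crossing argument uses only the slope field $dx/dy=-f(y)/f(x)$, and every level curve $F(x)+F(y)=c$ shares that field, not just $c=1$. At a boundary point $(y_c(\delta),e^{\delta}y_c(\delta))$ the density ratio is forced, $f(x)/f(y)=e^{-\delta}a(\delta)/b(\delta)$. Differentiating $(e^{\delta}-1)y-\beta\delta+\Lambda(\delta)$ with respect to $\delta=\log(x/y)$ along the level curve through that point gives $-\bigl(\Lambda(\delta)-\delta\Lambda'(\delta)\bigr)/\delta$, which is negative by Lemma~\ref{lem:Lambda}. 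So every such curve that touches the boundary passes from $\{\rho>R\}$ into $\{\rho<R\}$ as $t$ increases. The elementary slope comparison you hope to isolate therefore has the wrong sign, and no local argument can prevent the crossing.

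The starting point is not free either. As $t\to0$ the inequality reduces, at first order in $L$, to $\nu_\Gamma(\beta)\ge\beta-\tfrac13$, the Chen--Rubin bound, which is sharp to $O(1/\beta)$. Your fallback would have to extract that much precision from the log-concavity of $h$, and you give no mechanism for it. At the other end, $t\to\tfrac12$, $x-y$ and $\beta L-\Lambda(L)$ agree up to terms of order $(\log x)/x$. A lossy lower bound on $x-y$ will not settle the sign there.

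\textbf{The missing idea.} The paper uses a global comparison anchored at $t\to\tfrac12$ instead of following the curve from $t=0$. Suppose the inequality failed at some $t$, and let $d=\log(x/y)$. Then $y\le y_c(d)$, and since $F$ is increasing, $1=F(y)+F(e^{d}y)\le S(d)$, where $S(\delta)=F(y_c(\delta))+F(e^{\delta}y_c(\delta))$. Using the forced density ratio on the boundary,
\[
S'(\delta)=\frac{f(y_c)}{b}\bigl((a+b)y_c'+a\,y_c\bigr)=\frac{f(y_c)}{b}\bigl(\Lambda-\delta\Lambda'\bigr)>0 .
\]
This is the same quantity that defeats the crossing argument. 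Also $S(\delta)\to1$ as $\delta\to\infty$, because $y_c\to0$ and $x_c\to\infty$. Hence $S<1$ on $J$, a contradiction. The argument needs no asymptotics of the quantile curve at either end. For $\beta>\tfrac13$ it even yields the Chen--Rubin bound, as $S(0^+)=2F(\beta-\tfrac13)<1$.
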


\begin{proof}
Write $x=Q(\tfrac{1}{2} + t)$, $y=Q(\tfrac{1}{2} - t)$, and $d=\log(x/y)>0$, so that $x=e^dy$ and
$F(x)+F(y)=1$.  Define
\[
a(d)=e^d(d-1)+1 \,, \quad b(d)=e^d-1-d \,, \quad \Lambda(d)=\log\frac{a(d)}{b(d)} \,,
\]
where $a,b>0$ for $d>0$.
Differentiating $\ell=(x-y)/d$, with $x'=1/f(x)$ and $y'=-1/f(y)$, gives
\[
d^2\,\ell'(t)=\frac{e^{-d}a(d)}{f(x)}-\frac{b(d)}{f(y)} \,.
\]
So $\ell'(t)>0$ if and only if
\[
f(x)/f(y)<e^{-d}a(d)/b(d)=e^{\Lambda(d)-d} \, .
\]
Since $f(x)/f(y)=(x/y)^{\beta-1}e^{-(x-y)}=e^{(\beta-1)d-(x-y)}$, this condition is
\begin{equation}\label{eq:t2star}
x-y>\beta d-\Lambda(d) \,.
\end{equation}

If $\beta d\le\Lambda(d)$, \eqref{eq:t2star} holds because $x>y$.
Otherwise $d$ lies in $J=\{\delta>0:\beta\delta>\Lambda(\delta)\}$.
By Lemma~\ref{lem:Lambda}, $\Lambda(\delta)/\delta$ is strictly decreasing, so $J$ is an interval
$(d_\beta,\infty)$.
For $\delta\in J$, define
\[
y_c(\delta)=\frac{\beta\delta-\Lambda(\delta)}{e^\delta-1}>0 \,, \qquad
x_c(\delta)=e^\delta y_c(\delta) \,,
\]
so that $x_c-y_c=\beta\delta-\Lambda(\delta)$.
Since $x-y=(e^d-1)y$, \eqref{eq:t2star} is equivalent to $y>y_c(d)$.
Let
\[
S(\delta)=F\big(y_c(\delta)\big)+F\big(x_c(\delta)\big) \,.
\]
If instead $y\le y_c(d)$, then, because $F$ is increasing, $1=F(y)+F(e^dy)\le S(d)$.
So it suffices to show $S<1$ on $J$.

\emph{$S$ is increasing on $J$.}
On $J$,
\[
f(x_c)=f(y_c)\,e^{(\beta-1)\delta-(x_c-y_c)}=f(y_c)\,e^{-\delta}a/b \,.
\]
Hence
\[
S'=f(y_c)\,y_c'+f(x_c)\,e^\delta(y_c+y_c')
=\frac{f(y_c)}{b}\Big[(a+b)\,y_c'+a\,y_c\Big] \,.
\]
Substitute $a+b=\delta(e^\delta-1)$, $\delta e^\delta-a=e^\delta-1$, and
\[
y_c'=\frac{(\beta-\Lambda')(e^\delta-1)-(\beta\delta-\Lambda)\,e^\delta}{(e^\delta-1)^2} \,.
\]
The bracket then reduces to $\delta(\beta-\Lambda')-(\beta\delta-\Lambda)=\Lambda-\delta\Lambda'$, so
\[
S'(\delta)=\frac{f\big(y_c(\delta)\big)}{b(\delta)}\,\big(\Lambda(\delta)-\delta\Lambda'(\delta)\big)>0
\]
by Lemma~\ref{lem:Lambda}.

\emph{Limit.}
As $\delta\to\infty$, $\Lambda(\delta)/\delta\to0$ (Lemma~\ref{lem:Lambda}).
Hence $y_c\le\beta\delta/(e^\delta-1)\to0$ and
$x_c\ge x_c-y_c=\delta\big(\beta-\Lambda(\delta)/\delta\big)\to\infty$, so $S(\delta)\to0+1=1$.
An increasing function with limit $1$ is less than $1$ everywhere on $J$.
\end{proof}

\begin{Lemma}[An elementary concavity]
\label{lem:Lambda}
With $a,b,\Lambda$ as in Lemma~\ref{lem:logmean}, for $d>0$:
$\Lambda(d)>d\,\Lambda'(d)$, and $\Lambda(d)/d$ is strictly decreasing, with limit $0$ as
$d\to\infty$.
\end{Lemma}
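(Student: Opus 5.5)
The plan is to reduce all three assertions to one fact: $\Lambda$ is strictly concave on $(0,\infty)$ and $\Lambda(0+)=0$. Given this, concavity makes $\Lambda'$ strictly decreasing, so
\[
\Lambda(d)=\int_0^d\Lambda'(u)\,du>d\,\Lambda'(d)\qquad(d>0),
\]
which is the first claim. The second follows because $(\Lambda/d)'=(d\Lambda'-\Lambda)/d^2<0$.

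\emph{Behaviour at the two ends.} From the series
\[
a(d)=\sum_{n\ge2}(n-1)\frac{d^n}{n!}, \qquad b(d)=\sum_{n\ge2}\frac{d^n}{n!},
\]
both $a$ and $b$ equal $\tfrac{d^2}{2}(1+O(d))$. Hence $a/b\to1$ and $\Lambda(0+)=0$. As $d\to\infty$, $a/b=(d-1+e^{-d})/(1-(1+d)e^{-d})\sim d$. So $\Lambda(d)=\log d+o(1)$ and $\Lambda(d)/d\to0$.

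\emph{Concavity.} I would use the integral representations coming from $a'=de^d$, $b'=e^d-1$ and $a(0)=b(0)=0$:
\[
a(d)=\int_0^d s\,e^s\,ds, \qquad b(d)=\int_0^d (d-s)\,e^s\,ds .
\]
Let $m(d)$ be the mean of $s$ under the density proportional to $e^s$ on $[0,d]$. Explicitly,
\[
m(d)=d-1+\frac{d}{e^d-1},
\]
so that $a/b=m/(d-m)$ and
\[
\Lambda=\log m-\log(d-m).
\]
Here $d-m=1-d/(e^d-1)$ and $m$ are both explicit. Then
\[
\Lambda'=\frac{m'}{m}+\frac{m'-1}{d-m},
\]
and $\Lambda''<0$ becomes, after clearing the positive denominators $m^2(d-m)^2$ and $(e^d-1)^k$, an inequality of the form $E(d)>0$. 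Here $E$ is an exponential polynomial $\sum_{j,k}c_{jk}\,d^j e^{kd}$ with small integer $k$ and $j\le4$.

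\emph{Main obstacle.} The hard step is proving $E(d)>0$ for all $d>0$. My first attempt would be to expand $E$ as a power series in $d$ and show that every Taylor coefficient is nonnegative. The coefficient of $d^n$ is $\sum_{j,k}c_{jk}\,k^{n-j}/(n-j)!$, so for large $n$ it is dominated by the $e^{kd}$ term with the largest $k$; its sign is then easy to read off once $n$ exceeds an explicit threshold. The finitely many low-order coefficients would be checked directly, and several vanish by the $d\to0$ cancellations noted above.

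If this route fails, for example because some coefficients change sign, I would fall back to a different argument. The plan would be to show $\varphi=\Lambda-d\Lambda'$ directly positive using $\varphi(0+)=0$ and $\varphi'=-d\Lambda''$. This reduces the question to the sign of $\Lambda''$ on an interval. On that interval I would split into a small-$d$ range handled by the series and a large-$d$ range where $e^{-d}$ terms are negligible and $\Lambda''\approx-1/d^2$.
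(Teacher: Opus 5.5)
Your reduction is correct and is the same as the paper's. Strict concavity of $\Lambda$ plus $\Lambda(0^+)=0$ gives $\Lambda(d)=\int_0^d\Lambda'(u)\,du>d\,\Lambda'(d)$, and then $(\Lambda/d)'<0$. Your endpoint analysis ($a,b\sim d^2/2$ as $d\to0$, $a/b\sim d$ as $d\to\infty$) and the rewriting $\Lambda=\log m-\log(d-m)$ are also right. The gap is that strict concavity, the only substantive part of the lemma, is never proved. You do not write down $E$ or check any of its coefficients, and you allow that the route may fail. Your fallback (series for small $d$, $\Lambda''\approx-1/d^2$ for large $d$) would still need rigorous error control on an intermediate range, which is exactly where the difficulty lies.

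The missing step is a closed form for $\Lambda''$ that makes the sign check clean. From $a'=de^d$, $a''=(1+d)e^d$, $b'=e^d-1$, $b''=e^d$ one gets $aa''-a'^2=-e^d b$ and $bb''-b'^2=-a$. Hence
\[
\Lambda''=\frac{a}{b^2}-\frac{e^d\,b}{a^2}=\frac{a^3-e^d\,b^3}{a^2b^2} \, .
\]
The paper reaches this through the symmetries $a(-d)=e^{-d}b(d)$ and $b(-d)=e^{-d}a(d)$, which give $\Lambda(d)=d+L(-d)-L(d)$ with $L=\log b$. Taking cube roots, $\Lambda''<0$ exactly when
\[
h=e^{d/3}b-a=e^{4d/3}-(1+d)e^{d/3}-(d-1)e^d-1>0 \, .
\]
Here $h=\sum_{n\ge1}c_nd^n/n!$ with $3^nc_n=4^n-(n-1)3^n-3n-1$. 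This vanishes for $n\le4$, equals $36$ at $n=5$, and stays positive by a short induction.

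Your own plan would in fact succeed, since $e^d b^3-a^3=h\,(e^{2d/3}b^2+e^{d/3}ab+a^2)$ is a product of series with nonnegative coefficients. Without the cube root, however, that numerator has its first nonzero Taylor coefficient only at $d^9$. Its $e^{4d}$ term also outgrows the contribution $3^{n-3}(n^3-12n^2+65n+54)$ of the $e^{3d}$ terms only from $n=12$ on. So your ``finitely many low-order coefficients'' amount to about a dozen explicit checks plus a separate induction, none of which you have supplied.
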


\begin{proof}
From $a'=de^d$ and $b'=e^d-1$, a direct expansion gives $\Lambda'=a'/a-b'/b=N/M$, where
\begin{align*}
N(d)&=e^{-d}(a'b-ab')=2(\cosh d-1)-d^2 \\
&=\sum_{n\ge2}\frac{2\,d^{2n}}{(2n)!} \,, \\
M(d)&=e^{-d}a\,b=2d\sinh d-2(\cosh d-1)-d^2 \\
&=\sum_{n\ge2}\frac{2(2n-1)\,d^{2n}}{(2n)!} \,.
\end{align*}
In particular $0<\Lambda'\le\tfrac{1}{3}$.

For concavity, note the symmetries $a(-d)=e^{-d}b(d)$ and $b(-d)=e^{-d}a(d)$.  The first gives
$a(d)=e^d\,b(-d)$, so with $L=\log b$,
\[
\Lambda(d)=d+L(-d)-L(d) \, .
\]
Since $e^d b-(e^d-1)^2=-a$, we have $L''=-a/b^2$, and by the symmetries,
\[
L''(-d)=-\frac{a(-d)}{b(-d)^2}=-\frac{e^d\,b(d)}{a(d)^2} \, .
\]
Hence
\[
\Lambda''(d)=L''(-d)-L''(d)=\frac{a(d)^3-e^d\,b(d)^3}{a(d)^2\,b(d)^2} \, ,
\]
and $\Lambda''<0$ exactly when $h(d)=e^{d/3}b(d)-a(d)>0$.  Here
$h(d)=e^{4d/3}-(1+d)\,e^{d/3}-(d-1)\,e^d-1=\sum_{n\ge1}c_n d^n/n!$ with
\[
3^n c_n=4^n-(n-1)\,3^n-3n-1 \, .
\]
This vanishes for $n=1,\dots,4$ and equals $36$ for $n=5$.  If $4^n>(n-1)3^n+3n+1$ for some $n\ge5$, then
$4^{n+1}>4(n-1)3^n+12n+4=n\,3^{n+1}+3(n+1)+1+(n-4)3^n+9n$, so by induction $c_n>0$ for all $n\ge5$.
Hence $h>0$ for $d>0$, and $\Lambda$ is strictly concave.

Since $a-b$ vanishes at $0$ and has derivative $a>0$, $b<a$.
Also $(1+d)\,b-a=2\big(e^d-1-d-\tfrac{1}{2} d^2\big)>0$.
Hence $0<\Lambda(d)<\log(1+d)$, so $\Lambda(0^+)=0$ and
$\Lambda(d)=\int_0^d\Lambda'(u)\,du$.
Because $\Lambda'$ is strictly decreasing, $\Lambda(d)>d\,\Lambda'(d)$.
Then $(\Lambda/d)'=(d\Lambda'-\Lambda)/d^2<0$, and
$\Lambda(d)/d<\log(1+d)/d\to0$.
\end{proof}

\subsection{Gamma median bounds}
\label{app:gamma}

\begin{Theorem}[Gamma median bounds]
\label{thm:gamma}
Let $\nu_{\Gamma4}(\alpha) = \log 2/(2^{1/\alpha} - 1)$.  Then $\nu_\Gamma(\alpha) < \nu_{\Gamma4}(\alpha)$ for
$0 < \alpha < 1$, $\nu_{\Gamma4}(\alpha) < \nu_\Gamma(\alpha)$ for $\alpha > 1$, and the two are equal at
$\alpha = 1$.
\end{Theorem}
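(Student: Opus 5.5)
The plan is to obtain both inequalities directly from Theorem~\ref{thm:lower_bound}, applied on the edges $\beta = 1$ and $\alpha = 1$, where Lemma~\ref{lem:basic} gives the beta median in closed form. Throughout, compare odds rather than medians: $t \mapsto t/(1+t)$ is strictly increasing, and the ratio form $\nu_{\textrm{B}0}(\alpha,\beta)$ has odds $\nu_\Gamma(\alpha)/\nu_\Gamma(\beta)$.

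\emph{Case $0 < \alpha < 1$.} Apply Theorem~\ref{thm:lower_bound} with shape parameters $(\alpha, 1)$. Since $\alpha < 1$, the inequality is strict:
\[
\nu_{\textrm{B}0}(\alpha,1) < \nu_\textrm{B}(\alpha,1) = 2^{-1/\alpha} .
\]
The odds of the left side are $\nu_\Gamma(\alpha)/\nu_\Gamma(1) = \nu_\Gamma(\alpha)/\log 2$, using $\nu_\Gamma(1) = \log 2$ from Lemma~\ref{lem:basic}. The odds of the right side are
\[
\frac{2^{-1/\alpha}}{1 - 2^{-1/\alpha}} = \frac{1}{2^{1/\alpha} - 1} .
\]
Comparing odds gives $\nu_\Gamma(\alpha)/\log 2 < 1/(2^{1/\alpha} - 1)$, that is, $\nu_\Gamma(\alpha) < \nu_{\Gamma4}(\alpha)$.

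\emph{Case $\alpha > 1$.} Apply Theorem~\ref{thm:lower_bound} with shape parameters $(1, \alpha)$, so that $1 < \alpha$ plays the role of $\alpha < \beta$. Again the inequality is strict:
\[
\nu_{\textrm{B}0}(1,\alpha) < \nu_\textrm{B}(1,\alpha) = 1 - 2^{-1/\alpha} .
\]
The odds of the left side are $\log 2/\nu_\Gamma(\alpha)$. The odds of the right side are
\[
\frac{1 - 2^{-1/\alpha}}{2^{-1/\alpha}} = 2^{1/\alpha} - 1 .
\]
So $\log 2/\nu_\Gamma(\alpha) < 2^{1/\alpha} - 1$. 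Rearranging, which is valid because both sides are positive, gives $\nu_\Gamma(\alpha) > \log 2/(2^{1/\alpha} - 1) = \nu_{\Gamma4}(\alpha)$.

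\emph{Case $\alpha = 1$.} Here $\nu_{\Gamma4}(1) = \log 2/(2 - 1) = \log 2 = \nu_\Gamma(1)$ by Lemma~\ref{lem:basic}.

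I expect no real obstacle: all the analytic work sits inside Theorem~\ref{thm:lower_bound}. The only points needing care are that its strict-inequality clause applies whenever the two shape parameters differ, and that the conversion between medians and odds keeps the direction of each inequality.
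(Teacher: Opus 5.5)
Your proposal is correct and follows the paper's proof essentially step for step. Both apply Theorem~\ref{thm:lower_bound} on the edges $(\alpha,1)$ and $(1,\alpha)$ with the exact values from Lemma~\ref{lem:basic}; the only cosmetic difference is that you compare odds directly, where the paper inverts the monotone maps $t \mapsto t/(t+\log 2)$ and $t \mapsto \log 2/(\log 2 + t)$, which amounts to the same thing.
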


\begin{proof}
This is the special case of Theorem~\ref{thm:lower_bound} in which one shape parameter is $1$,
using the exact values in Lemma~\ref{lem:basic}.  Write $q=2^{-1/\alpha}\in(0,1)$, so that
\[
\nu_{\Gamma 4}(\alpha)=\frac{\log 2}{2^{1/\alpha}-1}=\frac{q\log 2}{1-q} \,.
\]

\emph{Case $0<\alpha<1$.}
Theorem~\ref{thm:lower_bound} with shape parameters $(\alpha,1)$ gives the strict inequality
\[
\frac{\nu_\Gamma(\alpha)}{\nu_\Gamma(\alpha)+\log 2}<q \,.
\]
The map $t\mapsto t/(t+\log 2)$ is strictly increasing on $(0,\infty)$, and it equals $q$ at
$t=\nu_{\Gamma 4}(\alpha)$.  Hence $\nu_\Gamma(\alpha)<\nu_{\Gamma 4}(\alpha)$.

\emph{Case $\alpha>1$.}
Theorem~\ref{thm:lower_bound} with shape parameters $(1,\alpha)$ gives
\[
\frac{\log 2}{\log 2+\nu_\Gamma(\alpha)}<1-q \,.
\]
The map $t\mapsto \log 2/(\log 2+t)$ is strictly decreasing on $(0,\infty)$, and it equals $1-q$
at $t=\nu_{\Gamma 4}(\alpha)$.  Hence $\nu_\Gamma(\alpha)>\nu_{\Gamma 4}(\alpha)$.

At $\alpha=1$, both sides equal $\log 2$.
\end{proof}

Besides $\nu_\Gamma(1) = \log 2$, the gamma median has a closed form at $\alpha = 2$, where the CDF is
$1 - (1+x)e^{-x}$.  Setting it to $\tfrac{1}{2}$ gives $-(1+x)e^{-(1+x)} = -1/(2e)$, so
\[
\nu_\Gamma(2) = -1 - W_{-1}\big(-1/(2e)\big) \approx 1.678347 \, ,
\]
with $W_{-1}$ the lower real branch of the Lambert $W$ function \citep{corless1996lambert}.  At
$\alpha = \tfrac{1}{2}$, the median is $\nu_\Gamma(\tfrac{1}{2}) = \big(\operatorname{erf}^{-1}(\tfrac{1}{2})\big)^2 \approx 0.227468$
in terms of the inverse error function.  For integers $\alpha \ge 3$, the equation
$e^{-x}\sum_{k<\alpha} x^k/k! = \tfrac{1}{2}$ is beyond the Lambert $W$ function.

\subsection{The ratio form:  special case and non-existence}
\label{app:ratio}

\begin{Lemma}[Special case]
\label{lem:special}
If a function $g$ satisfies $\nu_\textrm{B}(\alpha,\beta) = g(\alpha)/(g(\alpha) + g(\beta))$ for all
$\alpha, \beta > 0$, then
\[
\frac{g(\alpha)}{g(1)} = \frac{1}{2^{1/\alpha} - 1} \quad \textrm{for all} \quad \alpha > 0 \, .
\]
The same conclusion holds if the relation is assumed only for $\beta = 1$.
\end{Lemma}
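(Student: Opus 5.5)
The plan is to use only the relation at $\beta = 1$, together with the exact value $\nu_\textrm{B}(\alpha,1) = 2^{-1/\alpha}$ from Lemma~\ref{lem:basic}. That proves the stronger final sentence of the statement, and the first assertion then follows at once, since the full hypothesis includes the case $\beta = 1$.

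\emph{First step: $g(1) \neq 0$.} Setting $\alpha = \beta = 1$ gives $\tfrac{1}{2} = g(1)/(2g(1))$. For this expression to be defined at all, the denominator $2g(1)$ must be nonzero, so $g(1) \neq 0$. This is the only point where a little care is needed. The hypothesis only makes sense where the denominator $g(\alpha) + g(1)$ is nonzero, and I would state explicitly that the relation is understood to include that requirement. With $g(1) \ne 0$ we can divide by it throughout.

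\emph{Second step: solve for the ratio.} Write $u = g(\alpha)/g(1)$ and $q = 2^{-1/\alpha} \in (0,1)$. Dividing numerator and denominator by $g(1)$, the relation at $\beta = 1$ becomes
\[
q = \frac{u}{u+1}.
\]
Clearing the denominator gives $q(u+1) = u$, that is, $u(1-q) = q$. Since $q < 1$, we have $1 - q \neq 0$, so
\[
u = \frac{q}{1-q} = \frac{1}{2^{1/\alpha} - 1}.
\]
This is the claimed formula, and it holds for every $\alpha > 0$.

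No step is a real obstacle. The nearest thing to one is the well-definedness point in the first step, which is what forces $g(1) \neq 0$.
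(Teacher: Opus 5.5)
Your proposal is correct and follows the paper's proof. Like the paper, you specialize to $\beta = 1$, use $\nu_\textrm{B}(\alpha,1) = 2^{-1/\alpha}$ from Lemma~\ref{lem:basic}, and invert $q = u/(u+1)$ to get the odds $q/(1-q) = 1/(2^{1/\alpha}-1)$. The only difference is cosmetic: you get $g(1) \ne 0$ from the point $\alpha = \beta = 1$, while the paper notes that $g(1) = 0$ would force the median at $\beta = 1$ to be $0$ or $1$, and both arguments work.
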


\begin{proof}
The relation implies $g(\alpha)/g(\beta) = \nu_\textrm{B}(\alpha,\beta)/(1 - \nu_\textrm{B}(\alpha,\beta))$.  At
$\beta = 1$, Lemma~\ref{lem:basic} gives $\nu_\textrm{B}(\alpha,1) = 2^{-1/\alpha}$, so
$g(\alpha)/g(1) = 2^{-1/\alpha}/(1 - 2^{-1/\alpha}) = 1/(2^{1/\alpha} - 1)$.  (The relation forces
$g(1) \ne 0$, since otherwise the median at $\beta = 1$ would be $0$ or $1$.)
\end{proof}

\begin{Theorem}[Non-existence]
\label{thm:non}
No function $g$ satisfies $\nu_\textrm{B}(\alpha,\beta) = g(\alpha)/(g(\alpha) + g(\beta))$ for all
$0 < \alpha < \beta$.
\end{Theorem}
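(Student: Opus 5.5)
The plan is to use only the three lines where the median is known exactly, and to derive a contradiction at a single point. The hypothesis is imposed only for $0<\alpha<\beta$, so I use just the two pairs $(\alpha,1)$ with $\alpha<1$ and $(1,\beta)$ with $\beta>1$. On these pairs Lemma~\ref{lem:basic} gives the median exactly. The relation forces $g(1)\neq 0$ (as in the proof of Lemma~\ref{lem:special}) and fixes the ratios $g(\alpha)/g(1)$. Combining two of these ratios predicts the median at a point off those lines, and I will show that the prediction is wrong. Following the summary, I take the point $(\alpha,\beta)=(\tfrac12,2)$, where the Beta CDF is elementary.

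\emph{Step 1: the forced odds.} From the pair $(\tfrac12,1)$ the median is $2^{-2}=\tfrac14$, so $g(\tfrac12)/g(1)=\tfrac{1/4}{3/4}=\tfrac13$. From the pair $(1,2)$ the median is $1-2^{-1/2}$, so
\[
\frac{g(1)}{g(2)}=\frac{1-2^{-1/2}}{2^{-1/2}}=\sqrt2-1 .
\]
Hence the relation would force the median odds at $(\tfrac12,2)$ to be
\[
m=\frac{g(\tfrac12)}{g(2)}=\frac{\sqrt2-1}{3}.
\]
The forced median is then
\[
x_0=\frac{m}{1+m}=\frac{\sqrt2-1}{2+\sqrt2}=\frac{3\sqrt2-4}{2}\approx 0.1213 .
\]

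\emph{Step 2: the true median as a root of a cubic.} We have $B(\tfrac12,2)=\Gamma(\tfrac12)\Gamma(2)/\Gamma(\tfrac52)=\tfrac43$, and
\[
\int_0^x t^{-1/2}(1-t)\,dt = 2\sqrt x-\tfrac23 x^{3/2}.
\]
With $u=\sqrt x$ the CDF is therefore $\tfrac32u-\tfrac12u^3$. Setting it equal to $\tfrac12$ shows that $u=\sqrt{\nu_\textrm{B}(\tfrac12,2)}$ satisfies
\[
u^3-3u+1=0 .
\]

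\emph{Step 3: the contradiction.} Suppose $x_0$ were the median. Then $u=\sqrt{x_0}$ would satisfy $u(3-u^2)=1$, so $u=1/(3-x_0)$. Squaring gives $x_0(3-x_0)^2=1$. Now compute exactly in $\mathbb{Q}(\sqrt2)$:
\[
(3-x_0)^2=\frac{118-60\sqrt2}{4},
\qquad
x_0(3-x_0)^2=\frac{594\sqrt2-832}{8}.
\]
This is not $1$, because $\sqrt2$ is irrational. Hence $x_0\neq\nu_\textrm{B}(\tfrac12,2)$, and no such $g$ exists.

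The main obstacle is that the forced value and the true value are numerically very close. The root is $u\approx0.3473$, while $\sqrt{x_0}\approx0.3483$, so a numerical comparison would require careful error control. The exact algebraic check in $\mathbb{Q}(\sqrt2)$ avoids this. The one thing that needs care is the bookkeeping that all pairs used satisfy $\alpha<\beta$, so that only the restricted hypothesis is used.
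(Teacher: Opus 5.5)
Your proposal is correct and follows the paper's proof essentially step for step. You use the same pairs $(\tfrac12,1)$ and $(1,2)$, the same predicted value $(3\sqrt2-4)/2$ at $(\tfrac12,2)$, and the same cubic $\nu(3-\nu)^2=1$; your value $(594\sqrt2-832)/8$ equals the paper's $(297\sqrt2-416)/4$, and irrationality of $\sqrt2$ gives the contradiction.
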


\begin{proof}
Suppose such a $g$ exists.  The relation for the pairs $(\tfrac{1}{2}, 1)$ and $(1, 2)$ gives, as in
Lemma~\ref{lem:special} (using $\nu_\textrm{B}(1,2) = 1 - 2^{-1/2}$ for the second),
\[
\frac{g(\tfrac{1}{2})}{g(1)} = \frac{1}{2^2 - 1} = \frac{1}{3} \, , \qquad \frac{g(2)}{g(1)} = \frac{1}{\sqrt2 - 1} \, .
\]
So the relation at $(\tfrac{1}{2}, 2)$ predicts the median
\[
\nu = \frac{1/3}{1/3 + 1/(\sqrt2 - 1)} = \frac{\sqrt2 - 1}{\sqrt2 + 2} = \frac{3\sqrt2 - 4}{2} \, .
\]
The density of $\textrm{Beta}(\tfrac{1}{2}, 2)$ is $\tfrac{3}{4}  x^{-1/2}(1-x)$, with CDF
$\tfrac{3}{2}  x^{1/2} - \tfrac{1}{2} x^{3/2}$.  At the median this equals $\tfrac{1}{2}$, so
$\sqrt\nu\,(3 - \nu) = 1$, and squaring gives
\[
\nu(3-\nu)^2 = 1 \, .
\]
For the predicted value,
\[
\nu(3-\nu)^2 = \frac{3\sqrt2-4}{2}\left(\frac{10-3\sqrt2}{2}\right)^2 = \frac{297\sqrt2 - 416}{4} \, ,
\]
which equals $1$ only if $\sqrt2 = 420/297$, contradicting the irrationality of $\sqrt2$.
\end{proof}

Solving the cubic, the true median is found to be $\nu_\textrm{B}(\tfrac{1}{2},2) = 4\cos^2(4\pi/9) \approx 0.120615$, while the ratio form
of Lemma~\ref{lem:special}, which is $\nu_{\textrm{B}4}$, gives $\approx 0.121320$.

The median equation is algebraic more generally.  If $\beta = n$ is a positive integer and
$\alpha = p/q$ is rational in lowest terms, the substitution $x = y^q$ turns
$I_x(\alpha,n) = \tfrac{1}{2}$ into a polynomial equation in $y$ of degree $p + q(n-1)$, and by symmetry
the same holds with the shape parameters exchanged.  For $\textrm{Beta}(1/q, 2)$ it is the trinomial
$2y^{q+1} - 2(q+1)y + q = 0$.  The case $q = 2$ is the cubic above, $y^3 - 3y + 1 = 0$ with
$y = \sqrt{\nu}$; its Galois group is cyclic, which is why its root is a cosine,
$y = 2\cos(4\pi/9)$.  We computed the Galois groups of all such equations with $q \le 9$,
$\alpha \le 2$, $2 \le n \le 5$, and degree at most $30$, $163$ cases in all.  Besides $(\tfrac{1}{2},2)$,
the median is expressible in radicals in only two of them, both quartics.  For $(\tfrac{1}{3},2)$, the
equation is $2y^4 - 8y + 3 = 0$ with $\nu = y^3$, and Ferrari's method gives
\[
y = \tfrac{1}{2}\left(\sqrt{2m} - \sqrt{4\sqrt{2m}/m - 2m}\right) , \qquad
m = \sqrt[3]{1 + \sqrt{7/8}} + \sqrt[3]{1 - \sqrt{7/8}} \, ,
\]
so $\nu_\textrm{B}(\tfrac{1}{3},2) \approx 0.0549696$.  For $(2,3)$, $1 - \nu = 1/z$ with $z^4 - 8z + 6 = 0$,
and
\[
z = \tfrac{1}{2}\left(\sqrt{2m} + \sqrt{8\sqrt{2m}/m - 2m}\right) , \qquad
m = \sqrt[3]{4 + 2\sqrt{2}} + \sqrt[3]{4 - 2\sqrt{2}} \, ,
\]
so $\nu_\textrm{B}(2,3) \approx 0.385728$.  One more case is a curiosity:  for $(\tfrac{1}{6},2)$, the
equation is $y^7 - 7y + 3 = 0$ with $\nu = y^6 \approx 0.0062296$.  This is the Trinks polynomial, whose Galois
group is the simple group $\textrm{PSL}(2,7)$ of order $168$ \citep{erbach1979polynomials}, so it is not
solvable by radicals.  In each of the remaining $159$ cases, the factorization of the polynomial modulo a
small prime has a cycle type that, by Jordan's theorem, puts the alternating group of its degree in the
Galois group, so none of them is solvable by radicals either.  When neither shape parameter is an
integer, the CDF involves inverse trigonometric or other transcendental functions, and the median
equation is in general not algebraic.

\subsection{Kerman and Wilson--Hilferty bounds}
\label{app:kerman_wh}

\begin{Theorem}[Kerman lower bound]
\label{thm:kerman}
For $\tfrac{1}{3} < \alpha \le \beta$,
\[
0 < \nu_{\textrm{B}1}(\alpha,\beta) = \frac{\alpha - \frac{1}{3}}{\alpha + \beta - \frac{2}{3}} \le \nu_\textrm{B}(\alpha,\beta) \, ,
\]
with equality only when $\alpha = \beta$.
\end{Theorem}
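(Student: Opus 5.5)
The plan is to obtain Theorem~\ref{thm:kerman} as a corollary of Theorem~\ref{thm:lower_bound}, by showing that Kerman's ratio form lies below $\nu_{\textrm{B}0}$ on the whole region. Positivity is immediate: for $\tfrac13 < \alpha \le \beta$ both $\alpha - \tfrac13$ and $\alpha+\beta-\tfrac23$ are positive. At $\alpha = \beta$ both $\nu_{\textrm{B}1}$ and $\nu_\textrm{B}$ equal $\tfrac12$ by Lemma~\ref{lem:basic}. So fix $\tfrac13 < \alpha < \beta$. It then suffices to prove $\nu_{\textrm{B}1}(\alpha,\beta) \le \nu_{\textrm{B}0}(\alpha,\beta)$, since Theorem~\ref{thm:lower_bound} gives the strict inequality $\nu_{\textrm{B}0} < \nu_\textrm{B}$, and hence strictness of the chain.

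Both $\nu_{\textrm{B}1}$ and $\nu_{\textrm{B}0}$ are ratio forms, so, as noted in Section~\ref{app:notation}, comparing them is the same as comparing their odds:
\[
\frac{\alpha - \frac13}{\beta - \frac13} \le \frac{\nu_\Gamma(\alpha)}{\nu_\Gamma(\beta)}
\quad\Longleftrightarrow\quad
\frac{\nu_\Gamma(\beta)}{\beta - \frac13} \le \frac{\nu_\Gamma(\alpha)}{\alpha - \frac13} \, .
\]
The goal therefore reduces to a one-variable statement: $\phi(x) = \nu_\Gamma(x)/(x - \tfrac13)$ is nonincreasing on $(\tfrac13, \infty)$.

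To get this, write $\phi(x) = 1 + h(x)/(x - \tfrac13)$, where $h(x) = \nu_\Gamma(x) - x + \tfrac13$. I will use two classical facts about the gamma median: the Chen--Rubin lower bound $\nu_\Gamma(x) > x - \tfrac13$, which gives $h > 0$, and the monotonicity result of \citet{berg2006chen} that $x \mapsto \nu_\Gamma(x) - x$ is strictly decreasing, which gives $h$ decreasing. On $(\tfrac13,\infty)$ the factor $1/(x-\tfrac13)$ is also positive and strictly decreasing. The product of two positive decreasing functions is decreasing, so $\phi$ is strictly decreasing, and the odds inequality follows (in fact strictly).

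The main obstacle is this monotonicity input. The whole proof rests on $h$ being positive and decreasing, which is a genuinely nontrivial theorem about the gamma median rather than something derived in this paper. I would first try to cite Berg and Pedersen's result directly, and state clearly that the Kerman bound is its consequence through Theorem~\ref{thm:lower_bound}.

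If a self-contained argument is wanted instead (for example for the Lean formalization), I would try to prove only the weaker statement actually needed, that $\phi$ is nonincreasing. To do so, I would differentiate the defining equation $\int_0^{\nu_\Gamma(x)} t^{x-1}e^{-t}\,dt = \tfrac12\Gamma(x)$ to get an expression for $\nu_\Gamma'(x)$. The required inequality is then $(x - \tfrac13)\,\nu_\Gamma'(x) \le \nu_\Gamma(x)$, which I would attack with the paper's quantile-pairing technique from the proof of Theorem~\ref{thm:lower_bound}. I expect that fallback to be the hard part.
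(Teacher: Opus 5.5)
Your proposal is correct and follows the paper's proof essentially step for step. The paper likewise writes $\nu_\Gamma(\alpha)/(\alpha-\tfrac13) = 1 + \delta(\alpha)/(\alpha-\tfrac13)$, with $\delta = \nu_\Gamma - (\alpha-\tfrac13)$ positive and decreasing by \citet{berg2006chen}, cross-multiplies the resulting monotonicity to get $\nu_{\textrm{B}1} < \nu_{\textrm{B}0}$, and finishes with Theorem~\ref{thm:lower_bound}. The only differences are cosmetic: the paper obtains $\delta > 0$ from the strict increase of $\alpha - \nu_\Gamma(\alpha)$ toward its limit $\tfrac13$ rather than citing the lower bound separately (its Lean version takes exactly your two facts as a hypothesis), and your self-contained fallback is not needed.
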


For $\alpha \le \tfrac{1}{3}$ and $\alpha + \beta > \tfrac{2}{3}$ the inequality holds trivially, since then
$\nu_{\textrm{B}1} \le 0$.  For $\alpha + \beta < \tfrac{2}{3}$, both numerator and denominator are negative,
$\nu_{\textrm{B}1}$ is positive, and it is not a bound.  \citet{kerman2011closed} observed that the approximation
consistently underestimates the median when $\alpha < \beta$, but did not prove it.

\begin{proof}
If $\alpha = \beta$, both sides equal $\tfrac{1}{2}$.  Let $\tfrac{1}{3} < \alpha < \beta$.

Let $D(\alpha) = \alpha - \nu_\Gamma(\alpha)$, the mean--median difference of the gamma distribution.
\citet{berg2006chen} proved $0 < \nu_\Gamma'(\alpha) < 1$, so $D$ is strictly increasing on
$(0,\infty)$, and $D(\alpha) \to \tfrac{1}{3}$ as $\alpha \to \infty$.  Hence
$\delta(\alpha) = \tfrac{1}{3} - D(\alpha) = \nu_\Gamma(\alpha) - (\alpha - \tfrac{1}{3})$ is positive and
strictly decreasing.  (The rest of the proof uses only that $\delta$ is positive and nonincreasing, the
form in which the Lean proof takes the two facts; see Section~\ref{app:lean}.)  For $\alpha > \tfrac{1}{3}$,
\[
\frac{\nu_\Gamma(\alpha)}{\alpha - \frac{1}{3}} = 1 + \frac{\delta(\alpha)}{\alpha - \frac{1}{3}}
\]
has a positive decreasing numerator over a positive increasing denominator, so
$R(\alpha) = (\alpha - \tfrac{1}{3})/\nu_\Gamma(\alpha)$ is strictly increasing.

Then $R(\alpha) < R(\beta)$ gives $(\alpha - \tfrac{1}{3})\,\nu_\Gamma(\beta) < (\beta - \tfrac{1}{3})\,\nu_\Gamma(\alpha)$.
Adding $(\alpha - \tfrac{1}{3})\,\nu_\Gamma(\alpha)$ to both sides and dividing by the positive quantities
$\alpha + \beta - \tfrac{2}{3}$ and $\nu_\Gamma(\alpha) + \nu_\Gamma(\beta)$ yields
\[
\frac{\alpha - \frac{1}{3}}{\alpha + \beta - \frac{2}{3}} < \frac{\nu_\Gamma(\alpha)}{\nu_\Gamma(\alpha) + \nu_\Gamma(\beta)}
= \nu_{\textrm{B}0}(\alpha,\beta) \, ,
\]
and $\nu_{\textrm{B}0}(\alpha,\beta) < \nu_\textrm{B}(\alpha,\beta)$ by Theorem~\ref{thm:lower_bound}.
\end{proof}

On the two edges $\alpha = 1$ and $\beta = 1$ the beta median is known exactly (Lemma~\ref{lem:basic}), and
there the Wilson--Hilferty form $\nu_{\textrm{B}2}$ can be settled completely.  Here
$\nu_{\Gamma2}(\alpha) = \alpha(1 - \tfrac{1}{9\alpha})^3$, which is positive for $\alpha > \tfrac{1}{9}$.

\begin{Theorem}[Wilson--Hilferty bound on the edges $\alpha = 1$ and $\beta = 1$]
\label{thm:wh_edge}
(a) For $\beta > 1$, $\nu_\textrm{B}(1,\beta) < \nu_{\textrm{B}2}(1,\beta)$.

(b) There is $\alpha_0 \in (\tfrac{2}{5} , \tfrac{1}{2})$, numerically $\alpha_0 \approx 0.4076$, such that
$\nu_\textrm{B}(\alpha,1) < \nu_{\textrm{B}2}(\alpha,1)$ for $\alpha_0 < \alpha < 1$, and
$\nu_{\textrm{B}2}(\alpha,1) < \nu_\textrm{B}(\alpha,1)$ for $\tfrac{1}{9} < \alpha < \alpha_0$.

So on these edges, with $\alpha < \beta$, $\nu_{\textrm{B}2}$ is an upper bound exactly when the smaller
shape parameter exceeds $\alpha_0$.
\end{Theorem}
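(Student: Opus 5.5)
The plan is to reduce both parts to the sign of a single elementary function of one variable. On either edge the true median is known exactly (Lemma~\ref{lem:basic}), and comparing medians is the same as comparing odds (Section~\ref{app:notation}). The odds of $\nu_{\textrm{B}2}$ are $\nu_{\Gamma2}(\alpha)/\nu_{\Gamma2}(\beta)$, with $\nu_{\Gamma2}(1)=512/729$.

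\emph{Reduction for (a).} For $\alpha=1$ and $u=1/\beta\in(0,1)$, the true odds are $2^{u}-1$. The odds of the approximation are $(512/729)\,u/(1-u/9)^3$. So (a) is equivalent to $\varphi(u)<0$ on $(0,1)$, where
\[
\varphi(u)=\log(2^u-1)+3\log\!\left(1-\tfrac{u}{9}\right)-\log u-\log\tfrac{512}{729} \, .
\]

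\emph{Reduction for (b).} For $\beta=1$ and $s=1/\alpha\in(1,9)$, the true odds are $1/(2^s-1)$. The odds of the approximation are $(729/512)(1-s/9)^3/s$. So $\nu_\textrm{B}(\alpha,1)<\nu_{\textrm{B}2}(\alpha,1)$ if and only if $\varphi(s)<0$, with the same $\varphi$.

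The whole theorem therefore becomes a statement about the sign of $\varphi$ on $(0,9)$. We need $\varphi<0$ on $(0,1)$ and $\varphi(1)=0$. On $(1,9)$ we need $\varphi>0$ on $(1,s_0)$ and $\varphi<0$ on $(s_0,9)$, with $s_0=1/\alpha_0\in(2,\tfrac52)$.

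\emph{Derivative of $\varphi$.} Write $t=u\log 2$ and $h(t)=t/(1-e^{-t})-1$. Then
\[
\varphi'(u)=\frac{h(u\log 2)}{u}-\frac{3}{9-u} \, ,
\]
so the sign of $\varphi'$ is the sign of $k(u)=(9-u)\,h(u\log2)-3u$.

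\emph{Part (a).} I would show $k>0$ on $(0,1]$, so that $\varphi$ is strictly increasing there and hence $\varphi<\varphi(1)=0$. The crude bound $h(t)\ge t/2$ gives only $\log2/2\approx0.347$ against $3/(9-u)\le 0.375$. So I would use the alternating series bound $h(t)\ge t/2+t^2/12-t^4/720$, valid since $h(t)-t/2=\tfrac{t}{2}\coth\tfrac t2-1$ has an alternating Bernoulli series with decreasing terms. This turns $k>0$ into a polynomial inequality in $u$ on $[0,1]$. At $u=1$ the margin is small, about $0.386$ against $0.375$, but it is positive, and the rest is a finite check, for example by interval subdivision or by exhibiting a positive polynomial.

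\emph{Part (b).} On $(1,9)$ we have $k(1)>0$ from the step above and $k(9)=-27<0$. I would show that $k$ has exactly one zero there, most simply by proving $k''<0$ on $[1,9]$. Here $k''=(\log2)^2(9-u)h''(t)-2\log2\,h'(t)$, with $h''$ small and positive and $h'\ge\tfrac12$, so concavity should reduce to an elementary estimate. Then $\varphi$ increases and then decreases on $(1,9)$. Since $\varphi(1)=0$ and $\varphi\to-\infty$ as $s\to9$, $\varphi$ has a unique zero $s_0$ there, positive before it and negative after it.

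\emph{Locating $s_0$.} To place $s_0$ in $(2,\tfrac52)$ I would evaluate $\varphi$ exactly at the endpoints. At $s=2$ one has $\varphi(2)=\log\!\left(3\cdot(7/9)^3\cdot729/1024\right)=\log(1029/1024)>0$. At $s=\tfrac52$, using $2^{5/2}=4\sqrt2$, the inequality $\varphi(\tfrac52)<0$ becomes the algebraic inequality $(4\sqrt2-1)(13/18)^3\cdot\tfrac{729}{512}<\tfrac52$. I estimate its margin at roughly $10^{-3}$ in logarithm, so it should be checked by squaring to clear $\sqrt2$ rather than numerically. These two signs give $\alpha_0\in(\tfrac25,\tfrac12)$. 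The final sentence of the theorem then follows by combining (a) with (b), using the symmetry $\nu_\textrm{B}(1,\beta)=1-\nu_\textrm{B}(\beta,1)$.

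\emph{Main obstacle.} I expect the hardest step to be the shape control of $k$: the tight positivity at $u=1$ in part (a), and the uniqueness of the zero on $(1,9)$ in part (b). If concavity of $k$ fails near $u=9$, I would instead split $(1,9)$ into subintervals. On each subinterval I would bound $\varphi'$ or $\varphi$ by monotone rational bounds for $h$, which is the kind of argument that can be verified mechanically.
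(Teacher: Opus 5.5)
Your proof is correct after one sign fix. It follows a different route from the paper's.

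\textbf{The sign slip.} In the reduction for (b), the inequality $1/(2^s-1) < \tfrac{729}{512}(1-s/9)^3/s$ is equivalent to $\varphi(s) > 0$, not $\varphi(s) < 0$. The sign pattern you then set out to prove is the correct one: $\varphi>0$ on $(1,s_0)$ and $\varphi<0$ on $(s_0,9)$. So only that one sentence is wrong, but as written it contradicts your conclusion.

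\textbf{The sketched estimates close.} The expansion $h(t)-\tfrac{t}{2}=\tfrac{t}{2}\coth\tfrac{t}{2}-1=\sum_{k\ge1}2t^2/(t^2+4\pi^2k^2)$ gives two things.
\begin{itemize}
\item It gives your lower bound $h(t)\ge t/2+t^2/12-t^4/720$ for every $t>0$. The resulting lower bound for $k(u)/u$ is concave on $[0,1]$, so checking its endpoint values is enough.
\item It gives $0<h''\le\tfrac16$. Together with $h'>\tfrac12$ this yields $k''\le L(\tfrac43 L-1)<0$ on $[1,9]$, where $L=\log 2$. So $k$ is concave on the whole interval and no subdivision is needed.
\end{itemize}
Also, $k(1)=16\log 2-11>0$ exactly.

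\textbf{Comparison with the paper.} The paper compares the same two quantities, $2^u$ and $R(u)=1+\tfrac{512}{729}\,u/(1-u/9)^3$; your $\varphi$ is $\log(2^u-1)-\log(R(u)-1)$. The paper takes logarithms the other way, $D(u)=\log R(u)-u\log 2$.
\begin{itemize}
\item With this choice the transcendental part is linear, so $D''=(R''R-R'^2)/R^2$ is rational and has the sign of $-P(u)$ for an integer quartic $P$.
\item $P$ is decreasing with a single root $u^*\in(1.1,1.2)$, so $D$ is concave and then convex.
\item The double zero $D(0)=D(1)=0$ gives (a) and $D<0$ on $(1,u^*]$ at once.
\item Convexity together with $D(2)<0<D(\tfrac{5}{2})$ gives the unique crossing. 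These two endpoint checks are equivalent to yours: $1029/1024>1$, and the squared comparison with $32$.
\end{itemize}

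The paper's route never needs a numerical bound on $\log 2$ or any estimate of $t/(1-e^{-t})$. That is why it could be checked in exact rational arithmetic in Lean.

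Your first-derivative argument is the more conventional monotonicity proof. It also shows that the log-odds gap $\varphi$ is unimodal on $(0,9)$. The price is the Bernoulli-type bounds on $h$, $h'$ and $h''$, and rational enclosures of $\log 2$.
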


\begin{proof}
We have $\nu_{\Gamma2}(1) = (8/9)^3 = 512/729$.  Let $u$ be the reciprocal of the shape parameter that differs
from~1, so $u \in (0,1)$ on edge (a) and $u \in (1,9)$ on edge (b); in both cases that shape parameter has
$\nu_{\Gamma2} = (1 - u/9)^3/u$.  Comparing odds, both edges reduce to comparing $2^u$ with
\[
R(u) = 1 + \frac{512}{729}\,\frac{u}{(1 - u/9)^3} \, .
\]
On edge (a), $\nu_\textrm{B} < \nu_{\textrm{B}2}$ exactly when $2^u < R(u)$.  On edge (b),
$\nu_{\textrm{B}2}(\alpha,1) - \nu_\textrm{B}(\alpha,1)$ is a positive multiple of $2^u - R(u)$, so
$\nu_\textrm{B} < \nu_{\textrm{B}2}$ exactly when $2^u > R(u)$.

Let $D(u) = \log R(u) - u \log 2$ on $[0,9)$.  With $w = 1 - u/9$ and $c = 512/729$,
\[
R' = \frac{c\,(1 + 2u/9)}{w^4} \, , \qquad R'' = \frac{c\,(2/3 + 2u/27)}{w^5} \, ,
\]
and
\[
R''R - R'^2 = -\frac{c\,P(u)}{59049\,w^8} \, ,
\]
where
\[
P(u) = 2106 - 468u - 1024u^2 - 108u^3 + 6u^4 \, ,
\]
so $D'' = (R''R - R'^2)/R^2$ has the sign of $-P$.  On $[0,9]$, $P'(u) = -468 - 2048u - 324u^2 + 24u^3 < 0$
because $24u^3 \le 1944u$ there.  So $P$ is strictly decreasing, with a single root
$u^* \in (1.1, 1.2)$, since $P(1.1) > 0 > P(1.2)$.  Hence $D$ is strictly concave on $[0,u^*]$ and
strictly convex on $[u^*, 9)$.

Since $D(0) = D(1) = 0$, concavity gives $D > 0$ on $(0,1)$, which proves (a), and $D < 0$ on
$(1, u^*]$.  Next, $R(2) = 1367/343 < 4 = 2^2$ and $R(5/2)^2 = (12437/2197)^2 > 32 = (2^{5/2})^2$, so
$D(2) < 0 < D(5/2)$.  Convexity on $[u^*, 9)$, with $D(u^*) < 0$, then gives exactly one zero
$u_1 \in (2, \tfrac{5}{2} )$, with $D < 0$ on $(u^*, u_1)$ and $D > 0$ on $(u_1, 9)$.  Statement (b)
follows with $\alpha_0 = 1/u_1$.
\end{proof}

\begin{Conjecture}[Wilson--Hilferty upper bound]
\label{conj:wh}
For $1 \le \alpha < \beta$, $\nu_\textrm{B}(\alpha,\beta) < \nu_{\textrm{B}2}(\alpha,\beta)$.
\end{Conjecture}

\emph{Evidence.}  Theorem~\ref{thm:wh_edge}(a) is the edge $\alpha = 1$.  Numerically the bound holds
for all $\beta > \alpha$ once $\alpha \gtrsim 0.615$.  Along the edge $\beta = 1$, it holds down to
$\alpha_0 \approx 0.4076$, so the binding constraint lies at some $\beta > 1$.

For each $\beta > 1$, there is a crossing in $\alpha$.  As $\alpha \to \tfrac{1}{9}^+$,
$\nu_{\Gamma2}(\alpha) \to 0$, so $\nu_{\textrm{B}2}(\alpha,\beta) \to 0 < \nu_\textrm{B}(\tfrac{1}{9},\beta)$.
At $\alpha = 1$, Theorem~\ref{thm:wh_edge}(a) gives $\nu_{\textrm{B}2} > \nu_\textrm{B}$.  Since both are
continuous in $\alpha$, the intermediate value theorem gives a crossing $\alpha_0(\beta) \in (\tfrac{1}{9}, 1)$.
Numerically the crossing is unique, and $\alpha_0(1) = \alpha_0$ of Theorem~\ref{thm:wh_edge}(b).

In logit terms, the conjecture says that the excess of the true median over the lower bound of
Theorem~\ref{thm:lower_bound},
\[
T(\alpha,\beta) = \operatorname{logit} \nu_\textrm{B}(\alpha,\beta) - \log\frac{\nu_\Gamma(\alpha)}{\nu_\Gamma(\beta)} > 0 \, ,
\]
is at most $\rho_2(\alpha) - \rho_2(\beta)$, where $\rho_2 = \log(\nu_{\Gamma2}/\nu_\Gamma)$.  A necessary
condition is that $\nu_{\Gamma2}/\nu_\Gamma$ be decreasing, which holds numerically for $\alpha \gtrsim 0.40$.
Over $1 \le \alpha \le \beta$, $T$ is at most about a third of $\rho_2(\alpha) - \rho_2(\beta)$, largest
near $(\alpha,\beta) = (1,2)$.  Near the diagonal, the $\alpha$-slope of $\operatorname{logit}\nu_\textrm{B}$
at $\alpha = \beta$ exceeds that of $\log \nu_{\Gamma2}$.  At $\beta = 1$ the two slopes are
$2\log 2 > \tfrac{11}{8}$; numerically the difference is about $0.03/\beta^3$ for large $\beta$.

For large shape parameters the conjecture holds along every ray $\beta = r\alpha$ (Theorem~\ref{thm:whray}),
and it would follow everywhere from a uniform bound on the remainder of the large-shape expansion
(Conjecture~\ref{conj:largeR}); see Section~\ref{app:large}.  The margin is of second order, since
$\nu_{\Gamma2}(\alpha) - \nu_\Gamma(\alpha) \approx 7/(405\alpha)$.  This is what makes the conjecture harder
to prove than Theorem~\ref{thm:lower_bound}:  a proof needs an upper bound on $\nu_\textrm{B}$ that is
accurate to second order, and the pairing arguments used here give upper bounds only to first order,
such as $\nu_\textrm{B} < \alpha/(\alpha+\beta)$ (Lemma~\ref{lem:bracket}).

\subsection{Large-shape asymptotics}
\label{app:large}

When both shape parameters are large, each of the ratio forms $\nu_{\textrm{B}0}$, $\nu_{\textrm{B}1}$, and $\nu_{\textrm{B}2}$ agrees with $\nu_\textrm{B}$ to first order, so they differ only at second order;
$\nu_{\textrm{B}4}$ errs at first order.  An expansion of the median logit makes the comparison explicit.
Write $\Delta = 1/\alpha - 1/\beta$, which is positive for
$\alpha < \beta$, and
\begin{equation}\label{eq:largeexp}
\Lambda_\infty(\alpha,\beta) = \log\frac{\alpha}{\beta} - \frac{\Delta}{3}
- \frac{\Delta\,(29\alpha^2 + 50\alpha\beta + 29\beta^2)}{810\,\alpha\beta\,(\alpha+\beta)} \, ,
\end{equation}
with remainder $R(\alpha,\beta) = \operatorname{logit}\nu_\textrm{B}(\alpha,\beta) - \Lambda_\infty(\alpha,\beta)$.

\begin{Theorem}[Large-shape expansion]
\label{thm:largeexp}
Fix $r \ge 1$.  As $\alpha \to \infty$ with $\beta = r\alpha$, $R(\alpha,\beta) = O(\alpha^{-3})$.
\end{Theorem}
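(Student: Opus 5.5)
The plan is to derive the expansion of the median odds $m = X_\alpha/X_\beta$ (median of the ratio, Section~\ref{app:notation}) from an Edgeworth-type expansion of $\log X_\alpha - \log X_\beta$. Set $W = \log X_\alpha - \log X_\beta$. Then $\operatorname{logit}\nu_\textrm{B}(\alpha,\beta)$ is exactly the median of $W$, and the cumulants of $W$ are explicit:
\[
\kappa_1 = \psi_0(\alpha) - \psi_0(\beta), \qquad \kappa_n = \psi_{n-1}(\alpha) + (-1)^n \psi_{n-1}(\beta) \quad (n \ge 2).
\]
Along the ray $\beta = r\alpha$, Stirling-type asymptotics give $\kappa_2 = O(\alpha^{-1})$ and $\kappa_3 = O(\alpha^{-2})$. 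More generally, $\kappa_n = O(\alpha^{1-n})$, so the standardized cumulants are $\lambda_n = \kappa_n/\kappa_2^{n/2} = O(\alpha^{1-n/2})$.

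The main step is a Cornish--Fisher expansion of the median. Write the median as
\[
\kappa_1 - \frac{\kappa_3}{6\kappa_2} + c_5\,\frac{\kappa_3\kappa_4}{\kappa_2^{2}} + c_5'\,\frac{\kappa_3^3}{\kappa_2^3} + c_5''\,\frac{\kappa_5}{\kappa_2} + \cdots .
\]
The first correction $-\kappa_3/(6\kappa_2)$ is $O(\alpha^{-1})$. The next terms, which are odd-order, have size $\sigma\lambda_3\lambda_4$, $\sigma\lambda_3^3$ and $\sigma\lambda_5$, each of order $\alpha^{-1/2}\cdot\alpha^{-3/2} = \alpha^{-2}$. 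The even-order Cornish--Fisher terms vanish at the median ($z=0$).

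Next, expand each piece in $1/\alpha$ and $1/\beta$ using
\[
\psi_0(x) = \log x - \frac{1}{2x} - \frac{1}{12x^2} + O(x^{-4}), \qquad \psi_1(x) = \frac{1}{x} + \frac{1}{2x^2} + \frac{1}{6x^3} + O(x^{-5}),
\]
and similarly for $\psi_2$, $\psi_3$ and $\psi_4$. Then collect everything through order $\alpha^{-2}$. The order-$\alpha^{-1}$ part should come out as $\log(\alpha/\beta) - \Delta/3$, where the $-\Delta/2$ from $\psi_0$ combines with $-\kappa_3/(6\kappa_2) \approx \tfrac{1}{6}(1/\alpha - 1/\beta)$. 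The order-$\alpha^{-2}$ part should assemble into the rational term in \eqref{eq:largeexp}. This bookkeeping is mechanical (a computer-algebra check), and the coefficient $29,50,29$ over $810$ serves as the target to verify against.

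The main obstacle is rigor: justifying that the median of $W$ obeys the formal Cornish--Fisher expansion with an $O(\alpha^{-3})$ remainder. Cumulant algebra alone does not give this; one needs a valid Edgeworth expansion of the CDF of $(W-\kappa_1)/\sigma$ to order $\alpha^{-2}$, with a uniform error $O(\alpha^{-5/2})$ near $0$.

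I would get this from the density rather than the characteristic function. Write the density of $W$ as a convolution of two log-gamma densities, each of which has an explicit Laplace-type expansion. Alternatively, use the characteristic function
\[
\frac{\Gamma(\alpha+it)\,\Gamma(\beta-it)}{\Gamma(\alpha)\,\Gamma(\beta)},
\]
whose logarithm has a controllable Stirling expansion on $|t| \le \alpha^{1/2+\epsilon}$, and which decays like $|t|^{\alpha+\beta-1}e^{-\pi|t|}\ldots$ beyond that range. Smoothness then allows Esseen's smoothing lemma to give the Edgeworth error.

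Inverting the CDF expansion at $\tfrac{1}{2}$ is then a routine implicit-function step. Since the density at the median is of order $1/\sigma$, a CDF error of $O(\alpha^{-5/2})$ converts into a median error of $\sigma\cdot O(\alpha^{-5/2}) = O(\alpha^{-3})$, which gives the claimed $R = O(\alpha^{-3})$.
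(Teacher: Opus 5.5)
Your route is sound, and it differs from the paper's, which mentions the Cornish--Fisher expansion only as a formal cross-check after its proof.

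\emph{The paper's route.} It works directly with the logit density, which along a ray is exactly $e^{n\varphi(w)}$ with $n=\alpha+\beta$ and a fixed strictly concave $\varphi$. Pairing $\pm x$ about the mode turns the median condition into $2\int_0^{x_m}e^{n\chi}=\int_0^\infty\bigl(e^{n\chi(x)}-e^{n\chi(-x)}\bigr)\,dx$. The tails are handled by concavity and the centre by Laplace's method with Gaussian domination. Then $\mu_1,\mu_3$ are read off the Taylor coefficients $A,B,C,D$ of $\varphi$ at the mode. This is entirely real-variable, with no characteristic functions, smoothing lemma, or polygamma asymptotics.

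\emph{Your route.} You trade that for mechanical bookkeeping that is symmetric in $a=1/\alpha$, $b=1/\beta$, and it does close. With the Cornish--Fisher constants at $z=0$, the median of $W$ is
\[
\kappa_1-\frac{\kappa_3}{6\kappa_2}+\frac{\kappa_5}{40\kappa_2^2}-\frac{\kappa_3\kappa_4}{12\kappa_2^3}+\frac{17\kappa_3^3}{324\kappa_2^4}+O(\alpha^{-3}) \, .
\]
The second-order pieces cancel: $-\tfrac{1}{12}(a^2-b^2)$ in $\kappa_1$ against $+\tfrac{1}{12}(a^2-b^2)$ in $-\kappa_3/(6\kappa_2)$. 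The last three terms then sum to exactly $-\Delta(29a^2+50ab+29b^2)/(810(a+b))$.

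\emph{What each buys.} Your price is a real Edgeworth-validity argument for a family that is not an i.i.d.\ sum. Your bonus is that your cumulant and characteristic-function estimates depend on $\beta$ only through $b\le a$. So your argument should give $|R|\le C\alpha^{-3}$ with $C$ independent of $r\ge1$, which the paper's fixed-$(p,q)$ argument does not give as written.

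Two details need repair.

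First, in your displayed expansion the powers of $\kappa_2$ are one too small. The terms are $\sigma\lambda_3\lambda_4=\kappa_3\kappa_4/\kappa_2^3$, $\sigma\lambda_3^3=\kappa_3^3/\kappa_2^4$ and $\sigma\lambda_5=\kappa_5/\kappa_2^2$, as your text says. As displayed they are $O(\alpha^{-3})$, and the second-order term would be lost.

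Second, the decay $|t|^{\alpha+\beta-1}e^{-\pi|t|}$ is a fixed-parameter asymptotic. It gives nothing uniform as $\alpha\to\infty$ for $|t|$ between $\alpha^{1/2+\epsilon}$ and a multiple of $\alpha$. Use instead $|\Gamma(\alpha+it)/\Gamma(\alpha)|^2=\prod_{k\ge0}\bigl(1+t^2/(\alpha+k)^2\bigr)^{-1}$. This gives $e^{-t^2/(4\alpha)}$ for $|t|\le\alpha$ and $e^{-c\alpha}$ beyond, and the $\beta$ factor has modulus at most $1$. In the central range $|t|\le\alpha/2$, use Cauchy estimates for $\log\Gamma(\alpha+\zeta)$ and $\log\Gamma(\beta-\zeta)$ on disks of radius $3\alpha/4$. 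They give a Taylor remainder of $O(|s|^7\alpha^{-5/2})$ in standardized units, which is what the smoothing lemma needs for the $O(\alpha^{-5/2})$ CDF error.
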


Since $\Delta = (1 - 1/r)/\alpha$, the three terms of \eqref{eq:largeexp} are of orders $0$, $1$, and $2$
in $1/\alpha$, so the theorem gives $\operatorname{logit}\nu_\textrm{B}$ to second order along every ray.
By symmetry the same holds for $r < 1$.  This theorem and Theorem~\ref{thm:cfamily}, with their proofs,
also appear in \citet{lyon2026kerman}.

\begin{proof}
Let $n = \alpha + \beta$, $p = \alpha/n$, and $q = \beta/n$, so $p$ and $q$ are fixed.  The logit
$W = \operatorname{logit} X$ of $X \sim \textrm{Beta}(\alpha,\beta)$ has density proportional to
$e^{\alpha w}(1+e^w)^{-\alpha-\beta} = e^{n\varphi(w)}$, with $\varphi(w) = pw - \log(1+e^w)$, and its
median is $\operatorname{logit}\nu_\textrm{B}$.  Here $\varphi' = p - \sigma$ and
$\varphi'' = -\sigma(1-\sigma) < 0$, so $\varphi$ is strictly concave with maximum at
$w^* = \log(p/q) = \log(\alpha/\beta)$, where $\sigma = p$.  Put
$\chi(x) = \varphi(w^*+x) - \varphi(w^*)$.  Differentiating $\sigma' = \sigma(1-\sigma)$ repeatedly gives
\[
\chi(x) = -\frac{A}{2}x^2 + \frac{B}{6}x^3 + \frac{C}{24}x^4 + \frac{D}{120}x^5 + O(x^6) \, ,
\]
with $A = pq$, $B = -pq(q-p)$, $C = -pq(1-6pq)$, and $D = -pq(q-p)(1-12pq)$.

The median $x_m$ of $x = W - w^*$ satisfies $\int_{-\infty}^{x_m} e^{n\chi} = \int_{x_m}^\infty e^{n\chi}$,
that is,
\[
2\int_0^{x_m} e^{n\chi(x)}\,dx = \int_0^\infty \left(e^{n\chi(x)} - e^{n\chi(-x)}\right) dx \, .
\]
We expand both sides by Laplace's method \citep{olver1974asymptotics}.  By concavity,
$\chi(x) \le \chi(\pm\delta)\,|x|/\delta$ for $|x| \ge \delta$, so the parts of the integrals with
$|x| \ge \delta$ are exponentially small in $n$.  For $|x| < \delta$, put $x = y/\sqrt n$.  Then
$n\chi = -Ay^2/2 + \varepsilon(y)$, where
$\varepsilon = By^3/(6\sqrt n) + Cy^4/(24n) + Dy^5/(120 n^{3/2}) + O(y^6 n^{-2})$, and for small
$\delta$ we have $|\varepsilon| \le Ay^2/4$, which gives the usual Gaussian domination of the remainders.
Expanding $e^\varepsilon$, the terms of order $n^{-1/2}$ and $n^{-3/2}$ are odd in $y$, and the terms of
order $1$, $n^{-1}$, and $n^{-2}$ are even.  Only odd terms survive on the right.  With
$\int_0^\infty y^{2j+1}e^{-Ay^2/2}\,dy = j!\,2^j/A^{j+1}$, the right side is $n^{-1/2}$ times
\[
\frac{2B}{3A^2}\,n^{-1/2} + \left(\frac{2D}{15A^3} + \frac{2BC}{3A^4} + \frac{16B^3}{27A^5}\right)n^{-3/2}
+ O(n^{-5/2}) \, ,
\]
where the $n^{-3/2}$ coefficient collects $Dy^5/120$, $BCy^7/144$, and $B^3y^9/1296$ from $e^\varepsilon$.
In particular the right side is $O(n^{-1})$.  On the left the integrand is close to $1$ near $0$, so
$y_m = \sqrt n\,x_m = O(n^{-1/2})$; then $\varepsilon = O(n^{-2})$ on $[0, y_m]$, and the left side is $n^{-1/2}$ times
$2y_m - Ay_m^3/3 + O(n^{-5/2})$.  Matching powers gives
$y_m = \mu_1 n^{-1/2} + \mu_3 n^{-3/2} + O(n^{-5/2})$, with
\[
\mu_1 = \frac{B}{3A^2} \, , \qquad
\mu_3 = \frac{D}{15A^3} + \frac{BC}{3A^4} + \frac{49B^3}{162A^5} \, ,
\]
where $\frac{49}{162} = \frac{8}{27} + \frac{1}{162}$ combines half the last coefficient on the right with
the term $A\mu_1^3/6$ from the left, so $x_m = \mu_1/n + \mu_3/n^2 + O(n^{-3})$.

It remains to evaluate the coefficients.  First, $\mu_1/n = -(q-p)/(3npq) = -\Delta/3$, since
$(q-p)/(npq) = (\beta - \alpha)/(\alpha\beta) = \Delta$.  Next, using $(q-p)^2 = 1 - 4pq$,
\begin{align*}
\mu_3 &= \frac{q-p}{p^2q^2}\left[-\frac{1-12pq}{15} + \frac{1-6pq}{3} - \frac{49(1-4pq)}{162}\right] \\
&= -\frac{(q-p)(29 - 8pq)}{810\,p^2q^2} \, ,
\end{align*}
so $\mu_3/n^2 = -\Delta(29 - 8pq)/(810\,npq)$.  As $n^2(29 - 8pq) = 29\alpha^2 + 50\alpha\beta + 29\beta^2$
and $n^3pq = \alpha\beta(\alpha+\beta)$, this is the last term of \eqref{eq:largeexp}.  Finally
$\operatorname{logit}\nu_\textrm{B} = w^* + x_m$, and $O(n^{-3}) = O(\alpha^{-3})$ for fixed $r$.
\end{proof}

The same expansion comes out formally from the Cornish--Fisher expansion \citep{cornish1938moments}
of the median of $W$, whose cumulants are $\kappa_n = \psi_{n-1}(\alpha) + (-1)^n\psi_{n-1}(\beta)$.
It can also be derived in the original variable, by Temme's quadratic change of variable, which makes the
exponent exactly Gaussian, followed by Watson's lemma, the route of Temme's uniform asymptotic expansions
of the incomplete beta function \citep{temme1992asymptotic}; see \citet{lyon2026kerman}.  As
$\beta \to \infty$, \eqref{eq:largeexp} becomes $\log(\alpha/\beta) - 1/(3\alpha) - 29/(810\alpha^2)$, which
is $\log(\nu_\Gamma(\alpha)/\beta)$ to second order by
$\nu_\Gamma(\alpha) = \alpha - \tfrac{1}{3} + 8/(405\alpha) + O(\alpha^{-2})$ \citep{berg2006chen}, as
Theorem~\ref{thm:asymptote} requires.

\emph{Consequences.}  To second order, the logit errors of the approximations are
\begin{align*}
\operatorname{logit}\nu_\textrm{B} - \operatorname{logit}\nu_{\textrm{B}0}
&\approx \frac{4\Delta}{405\,(\alpha+\beta)} \, , \\
\operatorname{logit}\nu_\textrm{B} - \operatorname{logit}\nu_{\textrm{B}1}
&\approx \frac{8\Delta\,(2\alpha+\beta)(\alpha+2\beta)}{405\,\alpha\beta\,(\alpha+\beta)} \, , \\
\operatorname{logit}\nu_{\textrm{B}2} - \operatorname{logit}\nu_\textrm{B}
&\approx \frac{\Delta\,(7\alpha^2+10\alpha\beta+7\beta^2)}{405\,\alpha\beta\,(\alpha+\beta)} \, , \\
\operatorname{logit}\nu_\textrm{B} - \operatorname{logit}\nu_{\textrm{B}4}
&\approx \left(\tfrac{1}{2} \log 2 - \tfrac{1}{3}\right)\Delta \, .
\end{align*}
All four are positive for $\alpha < \beta$.  For $\nu_{\textrm{B}0}$ and $\nu_{\textrm{B}1}$ this agrees with
Theorems~\ref{thm:lower_bound} and~\ref{thm:kerman}, and for $\nu_{\textrm{B}4}$ with
Theorem~\ref{thm:B4large}.  For $\nu_{\textrm{B}2}$ the margin over $T$ is large:  $T$ is asymptotically a
fraction $4\alpha\beta/(7(\alpha+\beta)^2) \le \tfrac{1}{7}$ of $\rho_2(\alpha) - \rho_2(\beta)$.  Near the
diagonal the margin is $4(\beta - \alpha)/(135\beta^3)$ to leading order, matching the slope difference
of about $0.03/\beta^3$ found numerically.

The second-order term of \eqref{eq:largeexp} splits into $-\tfrac{29}{810}(\alpha^{-2} - \beta^{-2})$
and the $\nu_{\textrm{B}0}$ error $4\Delta/(405(\alpha+\beta))$.  The first part has the form
$f(\alpha) - f(\beta)$.  The second does not:  it vanishes when either shape parameter tends to infinity without
vanishing identically, which a nonzero difference $f(\alpha) - f(\beta)$ cannot do.  A ratio form
$g(\alpha)/(g(\alpha) + g(\beta))$ has logit $\log g(\alpha) - \log g(\beta)$, so no ratio form is accurate
to second order for all large shape parameters.  This is an asymptotic counterpart of Theorem~\ref{thm:non}.  Up to
a constant factor, ratio forms with $g(x) = x - \tfrac{1}{3} + 8/(405x) + O(x^{-2})$, such as
$\nu_{\textrm{B}0}$, are the only ones whose second-order error vanishes as $\beta \to \infty$.  Their
second-order error is then exactly the non-separable part.

To turn the comparison with $\nu_{\textrm{B}2}$ into a statement about the median itself, we need
the gap between its logit and $\Lambda_\infty$.

\begin{Lemma}[Wilson--Hilferty margin]
\label{lem:whmargin}
Let $E(\alpha,\beta) = \operatorname{logit}\nu_{\textrm{B}2}(\alpha,\beta) - \Lambda_\infty(\alpha,\beta)$.  For
$1 \le \alpha < \beta$,
\[
E(\alpha,\beta) > \frac{\Delta}{65\,\alpha} \, .
\]
\end{Lemma}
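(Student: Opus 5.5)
The inequality is elementary, since $\operatorname{logit}\nu_{\textrm{B}2} = \log\nu_{\Gamma2}(\alpha) - \log\nu_{\Gamma2}(\beta)$ and $\Lambda_\infty$ is explicit. The plan is to split $\Lambda_\infty$ into a separable part and the non-separable part identified after Theorem~\ref{thm:largeexp}, bound the separable part by an integral, and finish with a one-variable inequality.

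\emph{Separate the terms.} Using $29\alpha^2+50\alpha\beta+29\beta^2 = 29(\alpha+\beta)^2 - 8\alpha\beta$, the last term of \eqref{eq:largeexp} is
\[
\tfrac{29}{810}\left(\alpha^{-2}-\beta^{-2}\right) - \frac{4\Delta}{405(\alpha+\beta)} \, .
\]
Write $u = 1/x$ and set $\tilde h(u) = 3\log(1-u/9) + u/3 + \tfrac{29}{810}u^2$, which is $\log\nu_{\Gamma2}(x) - \log x + 1/(3x) + 29/(810x^2)$. With $a = 1/\alpha$ and $b = 1/\beta$ (so $0 < b < a \le 1$ and $\Delta = a-b$), this gives
\[
E = \tilde h(a) - \tilde h(b) - \frac{4\Delta\,ab}{405\,(a+b)} \, .
\]

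\emph{Bound $\tilde h(a)-\tilde h(b)$ from below.} Differentiating,
\[
\tilde h'(u) = \frac{29u}{405} - \frac{u}{27-3u} \, .
\]
The crude bound $u/(27-3u)\le u/24$ on $[0,1]$ loses slightly too much: it gives roughly $0.0150$ against the required $1/65 \approx 0.0154$. So I will use the sharper bound
\[
\frac{u}{27-3u} = \frac{u}{27}\cdot\frac{1}{1-u/9} \le \frac{u}{27}\left(1+\frac{u}{8}\right) \quad\text{for } 0\le u\le 1 ,
\]
which gives $\tilde h'(u) \ge \tfrac{14}{405}u - \tfrac{1}{216}u^2$. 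Integrating from $b$ to $a$,
\[
\tilde h(a)-\tilde h(b) \ge \frac{7}{405}(a^2-b^2) - \frac{1}{648}(a^3-b^3) \, .
\]
Factor $a^2-b^2 = \Delta(a+b)$ and $a^3-b^3 = \Delta(a^2+ab+b^2)$. Since $a \le 1$, the cubic term is at most $\Delta\,a(1+t+t^2)/648$, where $t = b/a \in (0,1)$.

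\emph{Reduce to one variable.} Divide everything by $\Delta a$. The claim $E > \Delta/(65\alpha)$ then follows from
\[
\phi(t) = \frac{7(1+t)}{405} - \frac{1+t+t^2}{648} - \frac{4t}{405(1+t)} > \frac{1}{65} \quad\text{on } [0,1) .
\]
At $t=0$ the left side is $7/405 - 1/648 \approx 0.01574 > 1/65 \approx 0.01538$. The derivative
\[
\phi'(t) = \frac{7}{405} - \frac{1+2t}{648} - \frac{4}{405(1+t)^2}
\]
is positive on $[0,1]$: it is at least $0.01728 - 0.00463 - 0.00988 > 0$. So $\phi$ increases from $\phi(0)$, and the inequality holds for all $t$, including strictly at $t=0$. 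The bound holds with margin for $\beta > \alpha$ because $\Delta>0$.

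\emph{Expected obstacle.} The only delicate point is the tightness at $t \to 0$, $a = 1$, that is $\alpha = 1$ and $\beta\to\infty$. There the constant $65$ leaves only about a $2\%$ margin, so the bound on $u/(27-3u)$ must keep the second-order term $u^2/216$ rather than a linear overestimate. If the rational check came out marginal, I would instead expand $\log(1-u/9)$ to one more term with an explicit remainder, which leaves more slack. Everything after that is a rational-coefficient inequality suitable for direct verification (for instance in Lean by \texttt{nlinarith} or \texttt{polyrith}).
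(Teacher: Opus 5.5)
Your proof is correct. Its skeleton is the paper's: with $a = 1/\alpha$ and $t = b/a$ (the paper's $s$), both arguments do the same three things:
\begin{itemize}
\item bound $E/(a\Delta)$ below by $F(t)$ minus a tail, where $F(t) = \bigl(7(1+t) - 4t/(1+t)\bigr)/405$;
\item show that this lower bound increases in $t$;
\item evaluate it at $t = 0$.
\end{itemize}
What you call the last term of $\Lambda_\infty$ is the quantity subtracted there, but your formula for $E$ is right.

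The difference is in how the $k \ge 3$ part of $3\log(1-u/9)$ is handled.
\begin{itemize}
\item The paper expands it as a power series, keeps the $k = 3$ term exactly, and bounds the rest geometrically via $h_{k-1} \le k a^{k-1}$. This reaches $G(1,0) = 7/405 - 1/729 - 1/1944 = 449/29160$, which clears $1/65$ by only about $1.3 \times 10^{-5}$.
\item You bound the derivative instead, using $1/(1-u/9) \le 1 + u/8$, and integrate. This is exactly the inequality behind the paper's Lean formalization, where $3\log(1-t/9) + t/3 + t^2/54 + t^3/648$ is shown to be nondecreasing on $[0,1]$.
\end{itemize}
Your tail constant at $t = 0$ is $1/648 = 9/5832$, against the paper's $11/5832$. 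That gives $\phi(0) = 17/1080$ and a margin over $1/65$ exactly $27$ times larger. So your argument shows that the formalized route already yields the full bound $\Delta/(65\alpha)$, not only the weaker $\Delta/(72\alpha^2)$ that was checked in Lean.

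The trade-off: the paper's series version writes $E$ exactly, with every neglected term visible. Yours reduces the lemma to a single elementary rational inequality on $[0,1]$, with comfortable room to spare.
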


\begin{proof}
Let $a = 1/\alpha \le 1$, $b = 1/\beta < a$, and $s = b/a \in (0,1)$.  Since
$\operatorname{logit}\nu_{\textrm{B}2} = \log(\alpha/\beta) + 3\log(1 - a/9) - 3\log(1 - b/9)$, expanding
the logarithms gives
\[
E = (a-b)\,\frac{29a^2 + 50ab + 29b^2}{810\,(a+b)} + \frac{a-b}{3}
- 3\sum_{k\ge1} \frac{a^k - b^k}{k\,9^k} \, .
\]
The $k = 1$ term cancels $(a-b)/3$, and the $k = 2$ term $(a^2-b^2)/54$ combines with the first term
to give
\[
\frac{E}{a-b} = \frac{7a^2 + 10ab + 7b^2}{405\,(a+b)} - 3\sum_{k\ge3}\frac{h_{k-1}}{k\,9^k} \, ,
\]
with $h_{k-1} = (a^k - b^k)/(a-b) = \sum_{j=0}^{k-1} a^j b^{k-1-j} \le k\,a^{k-1}$.  The $k = 3$ term is
$a^2(1+s+s^2)/729$.  The rest is at most $3\sum_{k\ge4} a^{k-1}/9^k = 3a^3/(9^4(1 - a/9)) \le a^3/1944$.
The first term equals $a\,F(s)$ with $F(s) = (7s + 3 + 4/(1+s))/405$.  So
\[
\frac{E}{a(a-b)} \ge F(s) - \frac{a(1+s+s^2)}{729} - \frac{a^2}{1944} =: G(a,s) \, .
\]
$G$ is decreasing in $a$, so $G(a,s) \ge G(1,s)$.  And $G(1,s)$ is increasing in $s$, since its
derivative is $(7 - 4/(1+s)^2)/405 - (1+2s)/729 \ge 3/405 - 3/729 > 0$.  Hence
$G(a,s) \ge G(1,0) = 7/405 - 1/729 - 1/1944 > 1/65$.  Since $a(a-b) = \Delta/\alpha$, the lemma follows.
\end{proof}

\begin{Theorem}[Wilson--Hilferty bound on rays]
\label{thm:whray}
For each $r > 1$, $\nu_\textrm{B}(\alpha, r\alpha) < \nu_{\textrm{B}2}(\alpha, r\alpha)$ for all
sufficiently large $\alpha$.
\end{Theorem}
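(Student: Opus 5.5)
Since $t \mapsto t/(1+t)$ is strictly increasing, the inequality $\nu_\textrm{B} < \nu_{\textrm{B}2}$ is equivalent to $\operatorname{logit}\nu_\textrm{B} < \operatorname{logit}\nu_{\textrm{B}2}$.
The plan is to compare both logits with $\Lambda_\infty$ of \eqref{eq:largeexp}.
Write
\[
\operatorname{logit}\nu_{\textrm{B}2}(\alpha,\beta) - \operatorname{logit}\nu_\textrm{B}(\alpha,\beta) = E(\alpha,\beta) - R(\alpha,\beta) \, ,
\]
where $E$ is the Wilson--Hilferty margin of Lemma~\ref{lem:whmargin} and $R$ is the remainder of Theorem~\ref{thm:largeexp}.
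It then suffices to show that $E - R > 0$ for all large $\alpha$ along the ray $\beta = r\alpha$.

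Fix $r > 1$ and restrict to $\alpha \ge 1$, so that $1 \le \alpha < \beta = r\alpha$.
Lemma~\ref{lem:whmargin} applies there and gives
\[
E(\alpha, r\alpha) > \frac{\Delta}{65\,\alpha} = \frac{1 - 1/r}{65\,\alpha^2} \, ,
\]
since $\Delta = (1 - 1/r)/\alpha$.
This is a positive lower bound of exact order $\alpha^{-2}$, with a constant depending only on $r$.

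On the other side, Theorem~\ref{thm:largeexp} gives $|R(\alpha, r\alpha)| \le C_r\,\alpha^{-3}$ for all $\alpha \ge A_r$, with constants $C_r$ and $A_r$ depending on $r$.
Combining the two bounds,
\[
E - R > \alpha^{-2}\left(\frac{1 - 1/r}{65} - \frac{C_r}{\alpha}\right) ,
\]
which is positive once $\alpha > \max\bigl(A_r,\, 1,\, 65\,C_r/(1 - 1/r)\bigr)$.
For such $\alpha$ the logit of $\nu_{\textrm{B}2}$ exceeds that of $\nu_\textrm{B}$, which proves the theorem.

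There is no real obstacle here, since the hard analytic work is already done in the two cited results.
The only point to check is that the $O(\alpha^{-3})$ in Theorem~\ref{thm:largeexp} is a genuine bound, eventually valid for each fixed $r$, and not just a formal expansion.
The Laplace-method proof does supply this, because its remainders are dominated by Gaussian integrals with constants depending only on $p = 1/(1+r)$.
The argument gives no uniformity in $r$, so it says nothing as $r \to 1$ or $r \to \infty$.
That uniformity is exactly what Conjecture~\ref{conj:largeR} would add.
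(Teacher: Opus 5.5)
Your proposal is correct and is the paper's own proof. The paper writes the logit difference as $E - R$, bounds $E > (1-1/r)/(65\alpha^2)$ by Lemma~\ref{lem:whmargin}, and uses $R = O(\alpha^{-3})$ from Theorem~\ref{thm:largeexp}; you add the explicit restriction to $\alpha \ge 1$, which the lemma requires, and an explicit threshold, which simply spell out that one-line argument.
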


\begin{proof}
$\operatorname{logit}\nu_{\textrm{B}2} - \operatorname{logit}\nu_\textrm{B} = E - R$.  By
Lemma~\ref{lem:whmargin}, $E > \Delta/(65\alpha) = (1 - 1/r)/(65\alpha^2)$, while $R = O(\alpha^{-3})$
by Theorem~\ref{thm:largeexp}.
\end{proof}

So Conjecture~\ref{conj:wh} holds asymptotically along every ray.  Extending this to the whole region
needs a bound on $R$ that is uniform in $r$, which the numerics strongly support.

\begin{Conjecture}[Uniform remainder]
\label{conj:largeR}
For all $0 < \alpha < \beta$,
\[
0 < R(\alpha,\beta) < \frac{\Delta}{72\,\alpha^2} \, .
\]
\end{Conjecture}

\emph{Evidence.}  Exact medians were computed in 30- to 40-digit arithmetic for $\alpha$ from $0.05$
to $80$ and $\beta/\alpha$ from $1 + 10^{-6}$ to $10^4$.  Over this range $R\,\alpha^2/\Delta$ lies between
$0.0012$ and $0.01371$.  It is largest near the diagonal at $\alpha \approx 0.96$, falls to about $0.0095$
on the diagonal and $0.0015$ for $\beta/\alpha \ge 10^3$ as $\alpha$ grows, and decreases toward
$0$ as $\alpha \to 0$.  The bound $\tfrac{1}{72} \approx 0.01389$ thus has about $1\%$ to spare.  Along each
ray, Theorem~\ref{thm:largeexp} shows that $R\,\alpha^2/\Delta$ stays bounded as $\alpha \to \infty$; the
conjecture asks for a bound uniform in $\beta/\alpha$ and valid for all $\alpha$.

Conjecture~\ref{conj:largeR} implies Conjecture~\ref{conj:wh} (an implication also checked in Lean,
Section~\ref{app:lean}):  for $1 \le \alpha < \beta$,
Lemma~\ref{lem:whmargin} gives
\[
\operatorname{logit}\nu_{\textrm{B}2} - \operatorname{logit}\nu_\textrm{B} = E - R
> \frac{\Delta}{65\alpha} - \frac{\Delta}{72\alpha^2} > 0 \, .
\]
Only its upper bound for $\alpha \ge 1$ is needed for this.  Its lower bound $R > 0$ says that the
logistic function of $\Lambda_\infty$ is a lower bound on $\nu_\textrm{B}$.  Its relative error is
$-2.6 \times 10^{-3}$ at $(\alpha,\beta) = (1,2)$, $-1.6 \times 10^{-5}$ at $(5,10)$ and
$-4.5 \times 10^{-7}$ at $(10,11)$, ten to fifty times smaller than that of $\nu_{\textrm{B}1}$.

The expansion also decides which ratio forms are bounds for large shape parameters.  Consider
$g(x) = x - \tfrac{1}{3} + c/x$ and functions that agree with it to that order.

\begin{Theorem}[Ratio forms with a $1/x$ correction, on rays]
\label{thm:cfamily}
Fix $r > 1$ and let
\[
c_r = \frac{8}{405} + \frac{4r}{405\,(1+r)^2} \, .
\]
Let $g(x) = x - \tfrac{1}{3} + c/x + O(x^{-2})$ as $x \to \infty$.  If $c < c_r$, then
$g(\alpha)/(g(\alpha) + g(r\alpha)) < \nu_\textrm{B}(\alpha, r\alpha)$ for all sufficiently large $\alpha$.
If $c > c_r$, the reverse inequality holds for all sufficiently large $\alpha$.
\end{Theorem}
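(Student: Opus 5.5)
The plan is to compare logits and use the large-shape expansion of Theorem~\ref{thm:largeexp}. Set $\beta = r\alpha$ and $\Delta = (1 - 1/r)/\alpha > 0$. The ratio form has logit $\log g(\alpha) - \log g(r\alpha)$. The claim is that
\[
\log g(\alpha) - \log g(r\alpha) - \operatorname{logit}\nu_\textrm{B}(\alpha,r\alpha) = \kappa\,(c - c_r)\,\alpha^{-2} + O(\alpha^{-3})
\]
for some constant $\kappa = \kappa(r) > 0$. Once that holds, the sign of the difference is the sign of $c - c_r$ for large $\alpha$. Because $t \mapsto t/(1+t)$ is strictly increasing, this sign decides which side of the median the ratio form lies on.

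First, expand $\log g(x)$ for large $x$. Write $g(x) = x\,(1 - \tfrac{1}{3}x^{-1} + c\,x^{-2} + O(x^{-3}))$. Then
\[
\log g(x) = \log x - \frac{1}{3x} + \Big(c - \frac{1}{18}\Big)\frac{1}{x^2} + O(x^{-3}),
\]
where the $-\tfrac{1}{18}$ comes from $-\tfrac12(\tfrac{1}{3x})^2$. The $O(x^{-2})$ hypothesis on $g$ is exactly what keeps the error at $O(x^{-3})$. This is uniform along the ray, since $r\alpha \ge \alpha \to \infty$. Subtracting the expansions at $x = \alpha$ and $x = r\alpha$ gives
\[
\log\frac{\alpha}{\beta} - \frac{\Delta}{3} + \Big(c - \frac{1}{18}\Big)\big(\alpha^{-2} - \beta^{-2}\big) + O(\alpha^{-3}).
\]

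Second, compare with $\Lambda_\infty$ in \eqref{eq:largeexp}. By Theorem~\ref{thm:largeexp}, $\operatorname{logit}\nu_\textrm{B} = \Lambda_\infty + O(\alpha^{-3})$. The zeroth- and first-order terms cancel, so the difference at second order is
\[
\Big(c - \tfrac{1}{18}\Big)\big(\alpha^{-2} - \beta^{-2}\big) + \frac{\Delta\,(29\alpha^2 + 50\alpha\beta + 29\beta^2)}{810\,\alpha\beta(\alpha+\beta)}.
\]
Factor out $\alpha^{-2} - \beta^{-2} = \Delta(\alpha+\beta)/(\alpha\beta)$. The second term then becomes $(\alpha^{-2} - \beta^{-2})$ times $\frac{29\alpha^2 + 50\alpha\beta + 29\beta^2}{810(\alpha+\beta)^2}$. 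Put $\rho = r/(1+r)^2$ and use $(\alpha+\beta)^2 = \alpha\beta/\rho$, so this factor equals $\frac{29 - 8\rho}{810}$. The bracket is therefore
\[
c - \frac{1}{18} + \frac{29}{810} - \frac{8\rho}{810} = c - \frac{16}{810} - \frac{4\rho}{405}.
\]
Since $\tfrac{1}{18} - \tfrac{29}{810} = \tfrac{16}{810} = \tfrac{8}{405}$, the bracket is exactly $c - c_r$. This gives the claim with $\kappa = 1 - r^{-2} > 0$.

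Third, conclude. For $c < c_r$ the ratio-form logit lies below the median logit once the term $(c - c_r)(1 - r^{-2})\alpha^{-2}$ dominates the $O(\alpha^{-3})$ remainder, so the ratio form is a lower bound for all sufficiently large $\alpha$. The case $c > c_r$ gives an upper bound by the same argument. One detail is that $g(\alpha) > 0$ for large $\alpha$, so the logarithms are defined.

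The main obstacle is already handled by Theorem~\ref{thm:largeexp}, which supplies the $O(\alpha^{-3})$ remainder along rays. What remains is the uniformity of the $O(x^{-2})$ hypothesis on $g$ when it is passed through the logarithm, together with the coefficient bookkeeping; I would double-check that bookkeeping against the $\nu_{\textrm{B}0}$ error formula listed under the Consequences.
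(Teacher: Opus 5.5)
Your proposal is correct and follows essentially the same route as the paper: expand $\log g$ to second order along the ray, subtract the expansion $\Lambda_\infty$ of Theorem~\ref{thm:largeexp}, and identify the second-order difference as $(c - c_r)(1 - r^{-2})\alpha^{-2} + O(\alpha^{-3})$, with positivity of $g$ for large arguments making the odds comparison valid. The only difference is in the bookkeeping. The paper works in $a = 1/\alpha$, $b = 1/\beta$ and uses the identity $45(a+b)^2 - (29a^2 + 50ab + 29b^2) = 16(a+b)^2 + 8ab$, while you factor out $\alpha^{-2} - \beta^{-2}$ and use $\rho = r/(1+r)^2$; both yield the same coefficient.
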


Since $0 < 4r/(1+r)^2 < 1$ for $r > 1$, we have $\tfrac{8}{405} < c_r < \tfrac{9}{405} = \tfrac{1}{45}$.
So $c \le \tfrac{8}{405}$ gives a lower bound on every ray, $c \ge \tfrac{1}{45}$ an upper bound on every
ray, and $c$ strictly between gives one or the other depending on the ray.  The theorem covers several forms at once:
\begin{itemize}
\item $g = \nu_\Gamma$, with $c = \tfrac{8}{405}$:  a lower bound, consistent with
Theorem~\ref{thm:lower_bound};
\item every partial sum of the asymptotic series of $\nu_\Gamma$ beyond the first, also with
$c = \tfrac{8}{405}$:  lower bounds;
\item Kerman's $g(x) = x - \tfrac{1}{3}$, with $c = 0$:  a lower bound, consistent with
Theorem~\ref{thm:kerman};
\item the Wilson--Hilferty $\nu_{\Gamma2}(x) = x - \tfrac{1}{3} + \tfrac{1}{27x} - \tfrac{1}{729x^2}$,
with $c = \tfrac{1}{27}$:  an upper bound, which is Theorem~\ref{thm:whray};
\item $\nu_{\Gamma3}(x) = (x - \tfrac{1}{6})^2/x = x - \tfrac{1}{3} + \tfrac{1}{36x}$, with
$c = \tfrac{1}{36}$:  an upper bound, so $\nu_{\textrm{B}3}$ is an asymptotic upper bound on every ray;
\item displaced forms such as $x - \tfrac{1}{3} + c/(x + d)$, which differ from
$x - \tfrac{1}{3} + c/x$ only at order $x^{-2}$.
\end{itemize}

\begin{proof}
Write $a = 1/\alpha$ and $b = 1/\beta = a/r$, so $\Delta = a - b$.  For large $x$,
$\log g(x) = \log x + \log\big(1 - \tfrac{1}{3x} + \tfrac{c}{x^2} + O(x^{-3})\big)
= \log x - \tfrac{1}{3x} + \big(c - \tfrac{1}{18}\big)\tfrac{1}{x^2} + O(x^{-3})$, so along the ray
\[
\log\frac{g(\alpha)}{g(\beta)} = \log\frac{\alpha}{\beta} - \frac{\Delta}{3}
+ \Big(c - \frac{1}{18}\Big)(a^2 - b^2) + O(\alpha^{-3}) \, .
\]
In terms of $a$ and $b$, the last term of \eqref{eq:largeexp} is
$-\Delta\,(29a^2 + 50ab + 29b^2)/(810(a+b))$, and $R = O(\alpha^{-3})$ by Theorem~\ref{thm:largeexp}.
Subtracting, and using $45(a+b)^2 - (29a^2 + 50ab + 29b^2) = 16(a+b)^2 + 8ab$,
\begin{align*}
&\operatorname{logit}\nu_\textrm{B} - \log\frac{g(\alpha)}{g(\beta)} \\
&\quad = \Delta\left[\Big(\frac{8}{405} - c\Big)(a+b) + \frac{4ab}{405\,(a+b)}\right] + O(\alpha^{-3}) \, .
\end{align*}
With $b = a/r$, we have $ab/(a+b)^2 = r/(1+r)^2$, so the bracket is $(a+b)(c_r - c)$, and the right
side is $a^2(1 - r^{-2})(c_r - c) + O(a^3)$.  For large $\alpha$, both $g(\alpha)$ and $g(\beta)$ are
positive, so the ratio form lies in $(0,1)$ and its odds are $g(\alpha)/g(\beta)$.  Its logit is below
that of $\nu_\textrm{B}$ when $c < c_r$, and above it when $c > c_r$, for all sufficiently large $\alpha$.
\end{proof}

At $c = \tfrac{1}{45}$ the second-order term vanishes as $r \to 1$, so near the diagonal the sign is
decided at third order; numerically, $c = \tfrac{1}{45}$ fails to be an upper bound in a thin region near
the diagonal ($\beta/\alpha < 1.51$ at $\alpha = 100$) that shrinks as $\alpha$ grows.

\subsection{The \texorpdfstring{$\nu_{\textrm{B}4}$}{nu B4} conjecture}
\label{app:nuB4}

Here $\nu_{\textrm{B}4}(\alpha,\beta) = (2^{1/\beta}-1)/(2^{1/\alpha} + 2^{1/\beta} - 2)$, the ratio form
of Lemma~\ref{lem:special}.  It equals $\nu_\textrm{B}$ on the lines $\alpha = 1$, $\beta = 1$, and $\alpha = \beta$.

\begin{Conjecture}[$\nu_{\textrm{B}4}$ bounding behavior]
\label{thm:B4bound}
For all $\alpha, \beta > 0$, the sign of $\nu_{\textrm{B}4}(\alpha,\beta) - \nu_\textrm{B}(\alpha,\beta)$ is
the sign of $(\alpha-1)(\beta-1)(\alpha-\beta)$.  Equivalently, for $\alpha < \beta$:
$\nu_{\textrm{B}4} > \nu_\textrm{B}$ if $\alpha < 1 < \beta$, and $\nu_{\textrm{B}4} < \nu_\textrm{B}$ if
$1 < \alpha < \beta$ or $\alpha < \beta < 1$.
\end{Conjecture}

\emph{Evidence.}  The sign pattern is visible in Figure~\ref{fig:contour1}.  On a logarithmic grid of
300 values over $[0.02, 200]$, all $44{,}850$ pairs $\alpha < \beta$ give the conjectured sign.
For large shape parameters the conjecture is a theorem, proved next.

\begin{Theorem}[$\nu_{\textrm{B}4}$ conjecture for large shape parameters]
\label{thm:B4large}
Let $x_1 = (\log 2)^2/(3\log 2 - 2) \approx 6.048$.  For $x_1 \le \alpha < \beta$,
\[
\nu_{\textrm{B}4}(\alpha,\beta) < \nu_{\textrm{B}1}(\alpha,\beta) < \nu_\textrm{B}(\alpha,\beta) \, .
\]
So Conjecture~\ref{thm:B4bound} holds whenever $\min(\alpha,\beta) \ge x_1$.
\end{Theorem}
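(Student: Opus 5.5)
The second inequality is Theorem~\ref{thm:kerman}, since $x_1 > \tfrac{1}{3}$. So the work is the first inequality, $\nu_{\textrm{B}4} < \nu_{\textrm{B}1}$ for $x_1 \le \alpha < \beta$. Both are ratio forms, and comparing them is the same as comparing their odds (Section~\ref{app:notation}). I would therefore reduce the claim to
\[
\frac{\nu_{\Gamma4}(\alpha)}{\nu_{\Gamma4}(\beta)} < \frac{\alpha - \frac{1}{3}}{\beta - \frac{1}{3}} \, ,
\]
which is equivalent to $\varphi(\beta) < \varphi(\alpha)$ for
\[
\varphi(x) = \frac{x - \tfrac{1}{3}}{\nu_{\Gamma4}(x)} = \frac{(x - \tfrac{1}{3})(2^{1/x} - 1)}{\log 2} \, .
\]
It therefore suffices to show that $\varphi$ is strictly decreasing on $[x_1,\infty)$. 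This mirrors the proof of Theorem~\ref{thm:kerman}, where monotonicity of $(\alpha - \tfrac{1}{3})/\nu_\Gamma(\alpha)$ did the job.

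To study $\varphi$, I will substitute $u = 1/x \in (0, 1/x_1]$ and write $L = \log 2$. Then
\[
\varphi = \psi(u) = \left(1 - \tfrac{u}{3}\right)\frac{e^{uL} - 1}{uL} \, ,
\]
and $\varphi$ decreasing in $x$ is the same as $\psi$ increasing in $u$. The power series is informative. The coefficient of $u^k$ in $\psi$ is
\[
\frac{L^{k-1}}{k!}\left(\frac{L}{k+1} - \frac{1}{3}\right) .
\]
It is positive for $k = 1$, where it equals $(3L - 2)/6$, because $L/2 > 1/3$. It is negative for all $k \ge 2$, because $L < 1$. So $\psi(u) \to 1$ as $u \to 0$, with positive initial slope $(3L-2)/6$; this is the first-order behavior $\nu_{\Gamma4}(x) \approx x - L/2$ against $x - \tfrac{1}{3}$. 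Monotonicity can therefore only fail at larger $u$, and the task is to show that the negative higher-order terms cannot overturn the linear term for $u \le 1/x_1$.

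The main obstacle is getting a clean explicit threshold, and specifically one that matches $x_1 = L^2/(3L - 2)$. For that I would work with the closed form rather than the series. Differentiating,
\[
\psi'(u) \propto \left(1 - \tfrac{u}{3}\right)\left(uLe^{uL} - e^{uL} + 1\right) - \tfrac{u}{3}\left(e^{uL} - 1\right) ,
\]
up to the positive factor $1/(u^2 L)$. With $t = uL$, positivity becomes an elementary inequality in $t$ on $(0, L^{\,}/x_1] = (0, (3L-2)/L]$.

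I would then bound the exponential terms from the appropriate sides by low-order Taylor polynomials with remainder. The positive part $te^t - e^t + 1 \ge t^2/2$ is used at second order. The terms $e^t - 1$ and $te^t$ that enter negatively are bounded above on the short interval, where $t \le (3L-2)/L \approx 0.115$. I expect this to reduce to a linear condition of the form $u \le (3L-2)/L^2$, which is exactly $x \ge x_1$. If the constants come out slightly off, the fallback is to prove $\psi' > 0$ on that explicit interval by a two-term series estimate: the $k \ge 3$ terms are tiny there, and the condition from $k = 1, 2$ alone, $u < (3L-2)/(2L(1-L))$, has ample room beyond $1/x_1$. Either way, strict monotonicity of $\varphi$ then gives the strict odds inequality, and the last sentence of the theorem follows by the symmetry $\nu(\alpha,\beta) = 1 - \nu(\beta,\alpha)$.
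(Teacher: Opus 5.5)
Your reduction is sound and matches the paper's, in different coordinates. Comparing odds, $\varphi = (x-\tfrac13)/\nu_{\Gamma4}$ decreasing is the paper's condition that $\log\nu_{\Gamma4}(x) - \log(x-\tfrac13)$ increase, and your formula for $\psi'$ is right. The gap is in the one estimate the theorem actually needs.

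\emph{The primary route cannot reach $x_1$.} If $te^t - e^t + 1$ is replaced by $t^2/2$, positivity of your bracket becomes $(e^t-1)/t < (3L-t)/2$. This fails for all $t \gtrsim 0.0395$, even with $e^t - 1$ kept exact, so it proves the claim only for $x \gtrsim 17.6$, nearly three times $x_1$. At $t = (3L-2)/L$ the true margin is only about $0.1\%$ of the terms being compared. To succeed in this $e^t$ form you would have to keep $te^t - e^t + 1$ through its $t^4/8$ term and bound $e^t$ from above to third order with a remainder. Even then no linear condition like $u \le (3L-2)/L^2$ falls out.

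\emph{The fallback is misjudged in the same way.} At $u = 1/x_1$ the $k=1,2$ terms give $\psi' \approx 1.5\times10^{-3}$, but the $k=3$ term alone is about $-1.05\times10^{-3}$. The crude tail bound $|c_k| \le L^{k-1}/(3\,k!)$ gives $(e^t-1-t)/3 \approx 2.3\times10^{-3}$, which swamps the two-term value. So the $k \ge 3$ terms are not tiny, and the room is not ample. That route can be rescued: keep $k=3$ exactly, bound $k \ge 4$ crudely, and note that $c_k < 0$ for all $k \ge 2$ makes $\psi$ concave, so the single check $\psi'(1/x_1) \approx 4.2\times10^{-4} > 0$ suffices. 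As written, though, it is not a proof.

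The idea you are missing is a cancellation in your own bracket. It equals $te^t - e^t + 1 - t^2e^t/(3L)$, so after multiplying by $e^{-t}$ the condition is $t - 1 + e^{-t} > t^2/(3L)$. This is exactly the paper's condition $(1-e^{-t})/t < 1 - t/(3L)$. In this $e^{-t}$ form the Taylor bounds alternate, so the single bound $e^{-t} > 1 - t + t^2/2 - t^3/6$ gives $(t-1+e^{-t})/t^2 > \tfrac12 - t/6$. That is at least $1/(3L)$ exactly when $t \le 3 - 2/L$, that is, $x \ge L^2/(3L-2) = x_1$. This one line is the paper's proof, and it is where $x_1$ comes from.
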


\begin{proof}
The second inequality is Theorem~\ref{thm:kerman}.  For the first, compare odds.  The odds of
$\nu_{\textrm{B}1}$ are $(\alpha - \tfrac{1}{3})/(\beta - \tfrac{1}{3})$ and those of $\nu_{\textrm{B}4}$ are
$\nu_{\Gamma4}(\alpha)/\nu_{\Gamma4}(\beta)$, with $\nu_{\Gamma4}(x) = \log 2/(2^{1/x} - 1)$.  So it
suffices that
\[
g(x) = \log \nu_{\Gamma4}(x) - \log\left(x - \tfrac{1}{3}\right)
\]
is strictly increasing on $[x_1, \infty)$.  Its derivative is
\[
g'(x) = \frac{\log 2}{x^2\,(1 - 2^{-1/x})} - \frac{1}{x - \frac{1}{3}} \, .
\]
With $t = (\log 2)/x$, the condition $g'(x) > 0$ becomes
\[
\frac{1 - e^{-t}}{t} < 1 - \frac{t}{3\log 2} \, .
\]
For $t > 0$, $e^{-t} > 1 - t + t^2/2 - t^3/6$, so the left side is less than $1 - t/2 + t^2/6$.  That is
at most the right side when $t/6 \le \tfrac{1}{2} - 1/(3\log 2)$, that is when $t \le 3 - 2/\log 2$, which
is positive.  In terms of $x$, this is $x \ge x_1$.
\end{proof}

The first inequality involves only elementary functions, not the beta median.  All the analysis is in
Theorem~\ref{thm:kerman}, which rests on Theorem~\ref{thm:lower_bound} and the two facts from
\citet{berg2006chen}.  The reason it works is a difference in order.  As $x \to \infty$,
\[
\nu_{\Gamma4}(x) = x - \tfrac{1}{2} \log 2 + \frac{(\log 2)^2}{12x} + O(x^{-3}) \, ,
\]
while $\nu_\Gamma(x) = x - \tfrac{1}{3} + O(x^{-1})$, and $\tfrac{1}{2} \log 2 > \tfrac{1}{3}$.  So
$\nu_{\textrm{B}4}$ errs at first order, by about $(\tfrac{1}{2} \log 2 - \tfrac{1}{3})(1/\alpha - 1/\beta)
\approx 0.0132\,(1/\alpha - 1/\beta)$ in the logit, while Kerman's $\nu_{\textrm{B}1}$ errs only at
second order (Section~\ref{app:large}).  For large shape parameters, then, $\nu_{\textrm{B}4}$ lies below a bound that is already proved.
Numerically $g$ decreases up to $x \approx 5.874$ and increases after it, so the exact threshold for
the first inequality is about $5.874$; $x_1$ is the one the two-term estimate gives.  The condition
$g(\alpha) < g(\beta)$ also holds for every $\alpha > 3.021$ once $\beta$ is large enough, since
$g < 0 = g(\infty)$ there.

The theorem does not reach the regions near shape parameter~1, where the conjecture's sign changes, or the
region $\alpha < \beta < 1$.  In particular it does not settle the inequality left open in Lean (see
Section~\ref{app:lean}), which is stated for all shape parameters.

The median odds give the conjecture a simple form.  By Lemma~\ref{lem:basic},
$m(\alpha,1) = 1/(2^{1/\alpha}-1) = \nu_{\Gamma4}(\alpha)/\log 2$, $m(y,x) = 1/m(x,y)$, and $m(x,x) = 1$.
So the odds of the approximation are a product of two exact median odds through the intermediate
shape parameter~1:
\[
\frac{\nu_{\textrm{B}4}(\alpha,\beta)}{1-\nu_{\textrm{B}4}(\alpha,\beta)} = m(\alpha,1)\, m(1,\beta) \, .
\]

\begin{Lemma}[Triangle form]
\label{lem:triangle}
Conjecture~\ref{thm:B4bound} holds if and only if
\[
m(x,z) < m(x,y)\, m(y,z)
\]
for all $0 < x < y < z$ with $1 \in \{x,y,z\}$.
\end{Lemma}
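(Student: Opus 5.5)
The plan is to translate the conjecture into a statement about odds and then sort it into cases according to where the intermediate parameter $1$ lies. Throughout, the median odds $m(\alpha,\beta)=\nu_\textrm{B}/(1-\nu_\textrm{B})$ are used. Since $t\mapsto t/(1+t)$ is strictly increasing, the sign of $\nu_{\textrm{B}4}-\nu_\textrm{B}$ equals the sign of $m(\alpha,1)\,m(1,\beta)-m(\alpha,\beta)$, by the product identity stated just before the lemma. By Lemma~\ref{lem:basic}, the identities $m(y,x)=1/m(x,y)$ and $m(x,x)=1$ reduce the conjecture to the case $\alpha<\beta$. The factor $(\alpha-1)(\beta-1)(\alpha-\beta)$ then has the sign of $-(\alpha-1)(\beta-1)$, and the case $\alpha=1$ or $\beta=1$ is exact on both sides, so there is nothing to prove there.

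Next I would split into the three regions $\alpha<\beta<1$, $1<\alpha<\beta$, and $\alpha<1<\beta$.
\begin{itemize}
\item In the region $\alpha<\beta<1$, the conjecture says $\nu_{\textrm{B}4}<\nu_\textrm{B}$, that is, $m(\alpha,1)m(1,\beta)<m(\alpha,\beta)$. Set $(x,y,z)=(\alpha,\beta,1)$ and multiply both sides by $m(\beta,1)=1/m(1,\beta)>0$. The condition becomes $m(\alpha,1)<m(\alpha,\beta)\,m(\beta,1)$, which is the triangle inequality $m(x,z)<m(x,y)m(y,z)$ with $z=1$.
\item In the region $1<\alpha<\beta$, the same manipulation with $(x,y,z)=(1,\alpha,\beta)$ gives $m(1,\beta)<m(1,\alpha)\,m(\alpha,\beta)$. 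This is the triangle inequality with $x=1$.
\item In the region $\alpha<1<\beta$, the conjecture says $m(\alpha,\beta)<m(\alpha,1)m(1,\beta)$. This is the triangle inequality directly, with $(x,y,z)=(\alpha,1,\beta)$ and $y=1$.
\end{itemize}

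Conversely, every triple $x<y<z$ with $1\in\{x,y,z\}$ falls into exactly one of these three regions, according to which position $1$ occupies. Each rewriting above is an equivalence, since it only multiplies by positive odds and uses the reciprocity of $m$. So the triangle inequality for all such triples is equivalent to the conjecture for all $\alpha<\beta$ with $1\notin\{\alpha,\beta\}$. Together with the symmetry reduction, this gives the stated \emph{if and only if}.

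The only step that needs care is the bookkeeping of the reciprocal identities when the triple is reordered. For example, in the region $1<\alpha<\beta$ the identities must be used to rewrite $m(\alpha,1)=1/m(1,\alpha)$ before multiplying through. One must also check that strictness is preserved, so that both directions of the equivalence hold. No analytic estimates are needed; the lemma is purely algebraic once the product identity and Lemma~\ref{lem:basic} are in hand.
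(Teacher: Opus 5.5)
Your proof is correct and takes essentially the same route as the paper. You reduce to $\alpha<\beta$ by the reciprocity of the median odds, note that the error vanishes on the lines $\alpha=1$, $\beta=1$, $\alpha=\beta$, and match each region $\alpha<1<\beta$, $1<\alpha<\beta$, $\alpha<\beta<1$ to the triangle inequality for the triples $(\alpha,1,\beta)$, $(1,\alpha,\beta)$, $(\alpha,\beta,1)$; the paper does the same computation additively, with $L=\log m$ and the defect $K(x,y,z)=L(x,y)+L(y,z)-L(x,z)$, where you multiply through by positive odds.
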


\begin{proof}
Write $L(x,y) = \log m(x,y)$, so that $L(y,x) = -L(x,y)$ and $L(x,x) = 0$.  The error
$\nu_{\textrm{B}4}(\alpha,\beta) - \nu_\textrm{B}(\alpha,\beta)$ has the sign of the difference of the
odds, $m(\alpha,1)\,m(1,\beta) - m(\alpha,\beta)$, and so the sign of
\[
D(\alpha,\beta) = L(\alpha,1) + L(1,\beta) - L(\alpha,\beta) \, .
\]
On the lines $\alpha = 1$, $\beta = 1$, and $\alpha = \beta$ we have $D = 0$, matching the zero of
$(\alpha-1)(\beta-1)(\alpha-\beta)$.  Swapping $\alpha$ and $\beta$ changes the sign of both $D$ and the
product, so it suffices to take $\alpha < \beta$.  With the triangle defect
$K(x,y,z) = L(x,y) + L(y,z) - L(x,z)$ for $x < y < z$:
\begin{itemize}
\item if $\alpha < 1 < \beta$, then $D = K(\alpha,1,\beta)$, and the product is positive;
\item if $1 < \alpha < \beta$, then $D = -K(1,\alpha,\beta)$, and the product is negative;
\item if $\alpha < \beta < 1$, then $D = -K(\alpha,\beta,1)$, and the product is negative.
\end{itemize}
So the conjecture holds exactly when $K > 0$ for every increasing triple containing 1, which is the
stated inequality.
\end{proof}

The three cases $y = 1$, $x = 1$, and $z = 1$ are the regions $\alpha < 1 < \beta$, $1 < \alpha < \beta$,
and $\alpha < \beta < 1$ of the conjecture.  Numerically, $m(x,z) < m(x,y)\, m(y,z)$ appears to hold for
all $0 < x < y < z$, not only when one of them is~1:  there was no exception in 400 random triples
spread log-uniformly over $[0.01, 200]$.  That stronger statement would in turn follow if
$z \mapsto m(u_2,z)/m(u_1,z)$ were increasing on $z \ge u_2$, for each $0 < u_1 < u_2$, since that ratio
equals $1/m(u_1,u_2)$ at $z = u_2$.

Theorem~\ref{thm:lower_bound} settles part of the conjecture.

\begin{Lemma}[Partial result]
\label{lem:B4partial}
If $0 < \alpha < \beta$ and
\[
\frac{\nu_{\Gamma4}(\alpha)}{\nu_\Gamma(\alpha)} \le \frac{\nu_{\Gamma4}(\beta)}{\nu_\Gamma(\beta)} \, ,
\]
then $\nu_{\textrm{B}4}(\alpha,\beta) < \nu_\textrm{B}(\alpha,\beta)$.
\end{Lemma}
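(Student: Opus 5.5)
The plan is to compare odds and chain the hypothesis with Theorem~\ref{thm:lower_bound}.  Since $t \mapsto t/(1+t)$ is strictly increasing, $\nu_{\textrm{B}4}(\alpha,\beta) < \nu_\textrm{B}(\alpha,\beta)$ is equivalent to
\[
\frac{\nu_{\Gamma4}(\alpha)}{\nu_{\Gamma4}(\beta)} < m(\alpha,\beta) \, ,
\]
where $m$ is the median odds of Section~\ref{app:notation}.  Both $\nu_{\Gamma4}$ and $\nu_\Gamma$ are positive, so the hypothesis can be rearranged by cross-multiplying into
\[
\frac{\nu_{\Gamma4}(\alpha)}{\nu_{\Gamma4}(\beta)} \le \frac{\nu_\Gamma(\alpha)}{\nu_\Gamma(\beta)} \, .
\]

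The right side is exactly the odds of $\nu_{\textrm{B}0}(\alpha,\beta)$.  For $0<\alpha<\beta$, Theorem~\ref{thm:lower_bound} gives the strict inequality $\nu_{\textrm{B}0}(\alpha,\beta) < \nu_\textrm{B}(\alpha,\beta)$.  In odds form this reads $\nu_\Gamma(\alpha)/\nu_\Gamma(\beta) < m(\alpha,\beta)$.  Chaining the weak inequality from the hypothesis with this strict one gives the displayed odds inequality.  Mapping back through $t\mapsto t/(1+t)$ then yields the claim.

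There is no real obstacle here; the work is all in Theorem~\ref{thm:lower_bound}.  Two small points need checking.  First, the odds of $\nu_{\textrm{B}4}$ are $\nu_{\Gamma4}(\alpha)/\nu_{\Gamma4}(\beta)$, which holds because $\nu_{\Gamma4}>0$ for all positive arguments.  Second, the strictness in the conclusion comes from the strict case $\alpha\ne\beta$ of Theorem~\ref{thm:lower_bound}, not from the hypothesis.
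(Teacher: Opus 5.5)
Your proposal is correct and follows the paper's proof essentially step for step. You rearrange the hypothesis to $\nu_{\Gamma4}(\alpha)/\nu_{\Gamma4}(\beta) \le \nu_\Gamma(\alpha)/\nu_\Gamma(\beta)$ and chain it with the strict odds inequality from Theorem~\ref{thm:lower_bound}, with the strictness coming from that theorem, exactly as in the paper.
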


\begin{proof}
By Theorem~\ref{thm:lower_bound} and the rearranged hypothesis,
\[
m(\alpha,\beta) > \frac{\nu_\Gamma(\alpha)}{\nu_\Gamma(\beta)} \ge \frac{\nu_{\Gamma4}(\alpha)}{\nu_{\Gamma4}(\beta)} \, ,
\]
and the right side is the odds of $\nu_{\textrm{B}4}(\alpha,\beta)$.  So the odds of $\nu_\textrm{B}$
exceed those of $\nu_{\textrm{B}4}$.
\end{proof}

Numerically, $\nu_{\Gamma4}/\nu_\Gamma$ decreases on $0 < \alpha \lesssim 2.08$ and increases after.  So
Lemma~\ref{lem:B4partial} covers the region $2.08 \lesssim \alpha < \beta$ and, for each $\alpha$ in
$(1, 2.08)$, all sufficiently large $\beta$ (for example $\beta \ge 7.44$ when $\alpha = 1.2$).

A second partial result brackets the beta median between two elementary expressions.

\begin{Lemma}[Logit bracket]
\label{lem:bracket}
For $0 < \alpha < \beta$,
\[
\frac{e^{\psi_0(\alpha)}}{e^{\psi_0(\alpha)} + e^{\psi_0(\beta)}} < \nu_\textrm{B}(\alpha,\beta) < \frac{\alpha}{\alpha+\beta} \, .
\]
\end{Lemma}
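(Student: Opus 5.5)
The plan is to work throughout with the logit $W=\operatorname{logit}X$ of $X\sim\textrm{Beta}(\alpha,\beta)$. Its median is $\operatorname{logit}\nu_\textrm{B}(\alpha,\beta)$, and, as in the proof of Theorem~\ref{thm:largeexp}, its density is proportional to $e^{n\varphi(w)}$ with $n=\alpha+\beta$, $p=\alpha/n<\tfrac12$, and $\varphi(w)=pw-\log(1+e^w)$. This $\varphi$ is strictly concave, with maximum (the mode of $W$) at $w^*=\log(\alpha/\beta)<0$. Also $W=\log X_\alpha-\log X_\beta$, so its mean is $\mu=\psi_0(\alpha)-\psi_0(\beta)$. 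In logit form the statement is $\mu<\operatorname{logit}\nu_\textrm{B}<w^*$: mean $<$ median $<$ mode. Both inequalities will come from comparing the density of $W$ at points placed symmetrically about a centre $c$, through $g_c(x)=f(c+x)-f(c-x)$ for $x>0$. This $g_c$ has the sign of
\[
h_c(x)=\varphi(c+x)-\varphi(c-x) \, .
\]
Here $h_c(0)=0$, $h_c'(0)=2\varphi'(c)=2(p-\sigma(c))$, and $h_c''(x)=\varphi''(c+x)-\varphi''(c-x)$, with $-\varphi''=\sigma(1-\sigma)$ symmetric and unimodal about $w=0$.

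\emph{Upper bound (centre $c=w^*$).} I would show $g_{w^*}<0$ for all $x>0$; integrating then gives $P(W>w^*)<P(W<w^*)$, so the median is below $w^*$. Since $h_{w^*}'(0)=0$, it suffices to show $h_{w^*}''<0$ on $(0,\infty)$. That is, $\sigma(1-\sigma)$ must be larger at $w^*+x$ than at $w^*-x$. This holds because $w^*<0$, so $w^*+x$ is always closer to $0$ than $w^*-x$, and $\sigma(1-\sigma)$ decreases in $|w|$. This step is routine.

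\emph{Lower bound (centre $c=\mu$).} Since $\psi_0(x)<\log x$ and the gap $\log x-\psi_0(x)$ is decreasing, $\mu<w^*$, so $\varphi'(\mu)>0$ and $g_\mu>0$ near $0$. For large $x$ the tails are $f(\mu+x)\sim e^{-\beta x}$ and $f(\mu-x)\sim e^{-\alpha x}$, so $g_\mu<0$ eventually. The key claim is that $h_\mu$ has exactly one positive zero $x_0$. I would get this from the shape of $h_\mu''$: again comparing distances of $\mu\pm x$ from $0$, $h_\mu''>0$ on $(0,|\mu|)$ and $h_\mu''<0$ afterwards. So $h_\mu$ is convex then concave, starting at $0$ with positive slope and tending to $-\infty$, which forces a single crossing. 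Given the single sign change from $+$ to $-$ at $x_0$, the defining property of the mean gives $\int_0^\infty x\,g_\mu(x)\,dx=0$. Hence
\[
\int_0^\infty g_\mu\,dx=\int_0^\infty\Bigl(1-\frac{x}{x_0}\Bigr)g_\mu(x)\,dx>0 \, ,
\]
since the integrand is nonnegative and not identically zero. This says $P(W>\mu)>P(W<\mu)$, i.e.\ the median exceeds $\mu$, which is the left inequality.

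The main obstacle is the single-crossing claim for $h_\mu$. The convex-then-concave argument needs care: I must check that $h_\mu'$ stays positive through the convex part before turning negative, and that no second zero can appear in the concave part. Concavity together with $h_\mu\to-\infty$ should settle the latter. Should the sign bookkeeping of $h_\mu''$ fail, I would fall back on the quantile-pairing device of Theorem~\ref{thm:lower_bound}, applied to $X_\alpha\le sX_\beta$ with $s=e^{\mu}$, and control $D(t)$ through the logarithmic mean as there. The strictness of both inequalities follows because $g_c$ is not identically zero.
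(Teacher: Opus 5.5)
Your upper bound is the paper's argument. One step of your lower bound, however, is false: $h_\mu''$ is not positive on $(0,|\mu|)$. For any centre $c<0$ and any $x>0$, $(c+x)^2-(c-x)^2=4cx<0$, so $c+x$ is always nearer $0$ than $c-x$. This is the same fact you used at $w^*<0$. With $s=\sigma(1-\sigma)$ it gives $h_c''(x)=s(c-x)-s(c+x)<0$ on all of $(0,\infty)$. Your claimed convex-then-concave pattern would also contradict your own upper-bound step, where the centre is negative too.

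With the correct sign, $h_\mu$ is strictly concave, with $h_\mu(0)=0$, $h_\mu'(0)=2\varphi'(\mu)>0$ and $h_\mu\to-\infty$. The single crossing is then immediate, and no fallback is needed. The rest is correct: the identity $\int_0^\infty x\,g_\mu(x)\,dx=\mathbb{E}W-\mu=0$ and the weight $1-x/x_0$. After this repair your proof is complete.

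Your route then differs from the paper's in where the lower-bound comparison is centred. The paper centres at the median $m=\operatorname{logit}\nu_\textrm{B}$ rather than the mean. The upper bound already gives $m<\log(\alpha/\beta)<0$, hence concavity of $h_m$ and $h_m'(0)>0$. So $Q(t)=F(m+t)+F(m-t)-1$ rises then falls between its zero values at $t=0$ and $t\to\infty$, and is therefore positive. The layer-cake formula then gives $m-\mathbb{E}W=\int_0^\infty Q(t)\,dt>0$.

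That route bootstraps from the upper bound and needs nothing about $\psi_0$ beyond $\mathbb{E}W=\psi_0(\alpha)-\psi_0(\beta)$. It even yields $\psi_0(\alpha)-\psi_0(\beta)<\log(\alpha/\beta)$ as a corollary. Your route imports that inequality, via monotonicity of $\log x-\psi_0(x)$, to put the centre left of the mode. In exchange, your two halves are independent, and you use the classical first-moment density-crossing argument.

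The import is also avoidable within your own framework. Suppose the mean $c$ were $\ge w^*$. If $c<0$, $h_c$ is concave with $h_c'(0)\le 0$. If $c\ge 0$, $h_c''\ge 0$ with $h_c'(0)<0$ and $h_c\to-\infty$. Either way $h_c<0$ on $(0,\infty)$, so $\int_0^\infty x\,g_c(x)\,dx<0$, contradicting its vanishing.
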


\begin{proof}
Let $W = \log(X_\alpha/X_\beta)$.  Then $X = X_\alpha/(X_\alpha+X_\beta)$ has the beta distribution and
$W = \operatorname{logit} X$, so the median of $W$ is $\mu = \operatorname{logit} \nu_\textrm{B}$, and
$\mathbb{E}W = \psi_0(\alpha) - \psi_0(\beta)$ since $\mathbb{E}\log X_\alpha = \psi_0(\alpha)$.  The density
of $W$ is $f(w) = e^{\alpha w}(1+e^w)^{-\alpha-\beta}/B(\alpha,\beta)$; let $F$ be its CDF and
$\sigma(w) = 1/(1+e^{-w})$.  In terms of $W$ the two claims are
\[
\psi_0(\alpha) - \psi_0(\beta) < \mu < \log(\alpha/\beta) \, .
\]

For a center $c$, pair the points $c \pm t$ through
\[
Q_c(t) = F(c+t) + F(c-t) - 1 \, , \quad t \ge 0 \, .
\]
Then $Q_c(0) = 2F(c) - 1$, $Q_c(t) \to 0$ as $t \to \infty$, and $Q_c'(t) = f(c+t) - f(c-t)$ has the
sign of
\begin{align*}
h_c(t) &= \log\frac{f(c+t)}{f(c-t)} \\
&= 2\alpha t - (\alpha+\beta)\left[\log(1+e^{c+t}) - \log(1+e^{c-t})\right] .
\end{align*}
We have $h_c(0) = 0$ and
\begin{align*}
h_c'(t) &= 2\alpha - (\alpha+\beta)\left[\sigma(c+t) + \sigma(c-t)\right] , \\
h_c''(t) &= -(\alpha+\beta)\left[\sigma'(c+t) - \sigma'(c-t)\right] .
\end{align*}
The function $\sigma' = \sigma(1-\sigma)$ is even and decreasing in $|w|$, and for $c < 0 < t$ we have
$|c+t| < t - c = |c-t|$, so $h_c'' < 0$:  for $c < 0$, $h_c$ is strictly concave on $[0,\infty)$.

\emph{Upper bound.}  Take $c = \log(\alpha/\beta) < 0$.  Then $\sigma(c) = \alpha/(\alpha+\beta)$, so
$h_c'(0) = 0$, and concavity gives $h_c' < 0$ and $h_c < 0$ on $(0,\infty)$.  Thus $Q_c$ is strictly
decreasing to the limit $0$, so $Q_c(0) > 0$, that is $F(c) > \tfrac{1}{2}$, and $\mu < c$.

\emph{Lower bound.}  Take $c = \mu$.  By the upper bound $\mu < \log(\alpha/\beta) < 0$, so $h_\mu$ is
strictly concave on $[0, \infty)$, and $\sigma(\mu) < \alpha/(\alpha+\beta)$ gives
$h_\mu'(0) = 2\alpha - 2(\alpha+\beta)\sigma(\mu) > 0$.  Also $h_\mu(t) = (\alpha - \beta)t + O(1) \to -\infty$.
So $h_\mu$ has exactly one zero $t_0 > 0$, with $h_\mu > 0$ on $(0,t_0)$ and $h_\mu < 0$ on
$(t_0,\infty)$.  Hence $Q_\mu$ rises on $[0,t_0]$ and falls on $[t_0,\infty)$.  As $Q_\mu(0) = 2F(\mu) - 1 = 0$
and $Q_\mu(\infty) = 0$, we get $Q_\mu(t) > 0$ for every $t > 0$:
\[
P(W \le \mu - t) > P(W > \mu + t) \quad \textrm{for all} \quad t > 0 \, ,
\]
so the logit is skewed to the left about its median.  Integrating,
\begin{align*}
\mu - \mathbb{E}W &= \int_0^\infty \left[ P(W \le \mu - t) - P(W > \mu + t) \right] dt \\
&= \int_0^\infty Q_\mu(t)\,dt > 0 \, ,
\end{align*}
so $\mu > \psi_0(\alpha) - \psi_0(\beta)$.  Applying $\sigma$ to both bounds gives the lemma.
\end{proof}

In logit terms, the median of $W$ lies strictly between its mean and its mode, $\log(\alpha/\beta)$.
For $1 < \alpha < \beta$ the upper bound is the classical mode--median--mean ordering for the beta
distribution itself \citep{groeneveld1977mode,payton1989bounds}, as noted by
\citet{kerman2011closed}; the proof here covers all $0 < \alpha < \beta$.  Numerically,
$\nu_{\textrm{B}4}$ also lies strictly inside this bracket, so the bracket alone cannot decide
Conjecture~\ref{thm:B4bound}.

\subsection{The low-\texorpdfstring{$\alpha$}{alpha} asymptote and elementary bounds}
\label{app:lowalpha}

Write
\begin{gather*}
P(\alpha,\beta) = \left(\frac{2\beta}{\alpha+\beta}\right)^{-1/\alpha} , \\
M_\beta(\alpha) = \left(\frac{\Gamma(\alpha+1)\,\Gamma(\beta+1)}{\Gamma(\alpha+\beta+1)}\right)^{1/\alpha} ,
\end{gather*}
so that
\begin{align*}
\nu_{\textrm{B}5}(\alpha,\beta) &= \left(\frac{\alpha B(\alpha,\beta)}{2}\right)^{1/\alpha} \\
&= 2^{-1/\alpha}\left(\frac{\Gamma(\alpha+1)\,\Gamma(\beta)}{\Gamma(\alpha+\beta)}\right)^{1/\alpha}
= P(\alpha,\beta)\, M_\beta(\alpha) \, .
\end{align*}

\begin{Theorem}[Beta median bounds]
\label{thm:low}
For $\alpha > 0$:  $\nu_{\textrm{B}5}(\alpha,\beta) < \nu_\textrm{B}(\alpha,\beta)$ if $\beta > 1$,
$\nu_{\textrm{B}5}(\alpha,\beta) > \nu_\textrm{B}(\alpha,\beta)$ if $\beta < 1$, and the two are equal if
$\beta = 1$.
\end{Theorem}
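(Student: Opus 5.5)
The plan is to compare the beta CDF at $\nu_{\textrm{B}5}$ with $\tfrac12$ directly, using the defining property of $\nu_{\textrm{B}5}$. By construction, $x_5 = \nu_{\textrm{B}5}(\alpha,\beta)$ satisfies
\[
\int_0^{x_5} x^{\alpha-1}\,dx = \frac{x_5^\alpha}{\alpha} = \tfrac12 B(\alpha,\beta) \, .
\]
The beta CDF at $x_5$ is instead
\[
I_{x_5}(\alpha,\beta) = \frac{1}{B(\alpha,\beta)} \int_0^{x_5} x^{\alpha-1}(1-x)^{\beta-1}\,dx \, .
\]
Since the beta CDF is continuous and strictly increasing on $(0,1)$, $\nu_{\textrm{B}5} < \nu_\textrm{B}$ exactly when $I_{x_5} < \tfrac12$, and $\nu_{\textrm{B}5} > \nu_\textrm{B}$ exactly when $I_{x_5} > \tfrac12$. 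So the whole theorem reduces to the sign of the integral of $x^{\alpha-1}\big((1-x)^{\beta-1} - 1\big)$ over $(0,x_5)$.

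There is one preliminary point to settle: we need $x_5$ in $(0,1)$, or else handle the case where it is not. For $\beta > 1$, the integrand comparison $(1-x)^{\beta-1} < 1$ on $(0,1)$ gives
\[
\int_0^1 x^{\alpha-1}\,dx = \frac1\alpha > B(\alpha,\beta) \, ,
\]
so $x_5^\alpha = \alpha B/2 < \tfrac12$ and $x_5 < 1$. On $(0,x_5)$ the factor $(1-x)^{\beta-1} - 1$ is then strictly negative, so $I_{x_5} < \tfrac12$, which gives the lower bound.

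For $\beta < 1$, the same comparison reverses, since $(1-x)^{\beta-1} > 1$. Here $x_5$ may be $\ge 1$; in that case $\nu_{\textrm{B}5} \ge 1 > \nu_\textrm{B}$ holds trivially. This happens, for example, on the diagonal with small $\beta$, where the value tends to~1. Otherwise $x_5 < 1$, the integrand difference is strictly positive on $(0,x_5)$, and $I_{x_5} > \tfrac12$, giving the upper bound. At $\beta = 1$ the integrands coincide, and $x_5 = 2^{-1/\alpha} = \nu_\textrm{B}(\alpha,1)$ by Lemma~\ref{lem:basic}.

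The only step needing care is this boundary case $x_5 \ge 1$ for $\beta < 1$, together with the strictness of the inequalities. Strictness follows because the integrand difference is nonzero on a set of positive measure whenever $x_5 > 0$. I do not expect any other obstacle; the identity $\alpha B(\alpha,\beta) = \Gamma(\alpha+1)\Gamma(\beta)/\Gamma(\alpha+\beta)$ is needed only to match the stated closed form.
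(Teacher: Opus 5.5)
Your argument is correct and is essentially the paper's. The paper makes the same pointwise comparison of $(1-x)^{\beta-1}$ with $1$, but it integrates up to the true median $\nu_\textrm{B}\in(0,1)$ and compares $\nu_\textrm{B}^\alpha/\alpha$ with $\tfrac12 B(\alpha,\beta)=\nu_{\textrm{B}5}^\alpha/\alpha$, which makes your separate case $\nu_{\textrm{B}5}\ge 1$ unnecessary; also, contrary to your aside, that case never occurs on the diagonal, since $\nu_{\textrm{B}5}(\beta,\beta)=\big(\Gamma(\beta+1)^2/\Gamma(2\beta+1)\big)^{1/\beta}<1$ by log-convexity of $\Gamma$, though it does occur for $\alpha>\beta$, e.g.\ $\nu_{\textrm{B}5}(1,\beta)=1/(2\beta)\ge 1$ when $\beta\le\tfrac12$.
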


\begin{proof}
Let $\nu = \nu_\textrm{B}(\alpha,\beta)$, so $\int_0^\nu x^{\alpha-1}(1-x)^{\beta-1}\,dx = \tfrac{1}{2} B(\alpha,\beta)$,
while by definition $\nu_{\textrm{B}5}^\alpha/\alpha = \tfrac{1}{2} B(\alpha,\beta)$.  On $(0,\nu)$,
$(1-x)^{\beta-1}$ is less than $1$ if $\beta > 1$, greater than $1$ if $\beta < 1$, and equal to $1$
if $\beta = 1$.  So $\int_0^\nu x^{\alpha-1}\,dx = \nu^\alpha/\alpha$ is greater than, less than, or equal to
$\tfrac{1}{2} B(\alpha,\beta) = \nu_{\textrm{B}5}^\alpha/\alpha$ in the three cases, and $y \mapsto y^\alpha$ is
strictly increasing.
\end{proof}

The same comparison, with $(1-x)^{\beta-1}$ bounded on $(0,\nu)$ by its value at $x = \nu$, gives the
size of the error:
\begin{equation}\label{eq:lowsand}
\left|\log\frac{\nu_\textrm{B}(\alpha,\beta)}{\nu_{\textrm{B}5}(\alpha,\beta)}\right|
\le \frac{|\beta - 1|}{\alpha}\,\log\frac{1}{1 - \nu_\textrm{B}(\alpha,\beta)} \, .
\end{equation}

\begin{Theorem}[Low-$\alpha$ asymptote]
\label{thm:lowasym}
For fixed $\beta > 0$, as $\alpha \to 0^+$, $\nu_\textrm{B}(\alpha,\beta)/\nu_{\textrm{B}5}(\alpha,\beta) - 1$ tends to
$0$ faster than any power of $\alpha$, and
\[
\lim_{\alpha \to 0^+} 2^{1/\alpha}\,\nu_\textrm{B}(\alpha,\beta) = e^{-\gamma - \psi_0(\beta)} \, , \qquad
\lim_{\alpha \to 0^+} M_\beta(\alpha) = e^{-\gamma - \psi_0(\beta+1)} \, .
\]
\end{Theorem}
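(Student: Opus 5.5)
The plan has three parts: the relative error, the limit of $M_\beta$, and then the limit of $2^{1/\alpha}\nu_\textrm{B}$ as their product. The relative error comes from the sandwich \eqref{eq:lowsand}. The limit of $M_\beta$ is a Taylor expansion of $\log\Gamma$. The last limit follows because $2^{1/\alpha}P(\alpha,\beta)$ has an explicit limit.

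\emph{Step 1: $\nu_\textrm{B}$ is super-exponentially small.} By Theorem~\ref{thm:low}, $\nu_\textrm{B}$ and $\nu_{\textrm{B}5}$ are comparable, so it suffices to bound $\nu_{\textrm{B}5}$. Write $\nu_{\textrm{B}5} = 2^{-1/\alpha}\big(\Gamma(\alpha+1)\Gamma(\beta)/\Gamma(\alpha+\beta)\big)^{1/\alpha}$. The inner ratio tends to $1$ as $\alpha\to0$, and its logarithm is $O(\alpha)$, so the $1/\alpha$ power stays bounded. Hence $\nu_{\textrm{B}5} \le C\,2^{-1/\alpha}$ for small $\alpha$.

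For $\beta<1$ we have $\nu_\textrm{B}<\nu_{\textrm{B}5}$, which gives the bound on $\nu_\textrm{B}$ directly. For $\beta>1$ the inequality runs the other way, so we argue differently. Since $\nu_\textrm{B}<\alpha/(\alpha+\beta)\to0$ by Lemma~\ref{lem:bracket}, the quantity $\log(1/(1-\nu_\textrm{B}))$ is at most $2\nu_\textrm{B}$ eventually. Then \eqref{eq:lowsand} gives
\[
\nu_\textrm{B}\le \nu_{\textrm{B}5}\, e^{2|\beta-1|\nu_\textrm{B}/\alpha}.
\]
This is self-consistent only if $\nu_\textrm{B}/\alpha$ stays bounded. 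To get that, first use $\nu_\textrm{B}<\alpha/(\alpha+\beta)$, which makes the exponent $O(1)$. That yields $\nu_\textrm{B}=O(2^{-1/\alpha})$ in both cases.

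\emph{Step 2: the relative error.} Plug Step 1 back into \eqref{eq:lowsand}. The right side is
\[
O\big(|\beta-1|\,2^{-1/\alpha}/\alpha\big),
\]
which is $o(\alpha^k)$ for every $k$. Since $|e^x-1|\le 2|x|$ for small $x$, the quantity $\nu_\textrm{B}/\nu_{\textrm{B}5}-1$ tends to $0$ faster than any power.

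\emph{Step 3: the limits.} For $M_\beta$, write
\[
\log M_\beta(\alpha) = \frac{\log\Gamma(1+\alpha)-\log\Gamma(1+\alpha+\beta)+\log\Gamma(1+\beta)}{\alpha}.
\]
The numerator vanishes at $\alpha=0$. By differentiability of $\log\Gamma$ (or L'H\^opital), the quotient tends to
\[
\psi_0(1)-\psi_0(1+\beta) = -\gamma-\psi_0(\beta+1).
\]
Next, $2^{1/\alpha}P(\alpha,\beta) = (1+\alpha/\beta)^{1/\alpha}\to e^{1/\beta}$. Using $\psi_0(\beta+1)=\psi_0(\beta)+1/\beta$, this gives
\[
2^{1/\alpha}\nu_{\textrm{B}5}\to e^{1/\beta}e^{-\gamma-\psi_0(\beta)-1/\beta}=e^{-\gamma-\psi_0(\beta)}.
\]
Step 2 then transfers this limit to $2^{1/\alpha}\nu_\textrm{B}$.

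The main obstacle is Step 1 for $\beta>1$. There Theorem~\ref{thm:low} only bounds $\nu_\textrm{B}$ from below, so an a priori upper bound is needed. The crude bracket of Lemma~\ref{lem:bracket} fed into \eqref{eq:lowsand} supplies it. Alternatively, one can compare the CDF at $x=\alpha$ directly.
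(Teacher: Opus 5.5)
Your proposal is correct and follows the paper's proof: bound $\nu_{\textrm{B}5} = O(2^{-1/\alpha})$, use the bracket $\nu_\textrm{B} < \alpha/(\alpha+\beta)$ to keep the exponent in \eqref{eq:lowsand} bounded so that $\nu_\textrm{B} = O(2^{-1/\alpha})$, then feed $\log(1/(1-\nu_\textrm{B})) \le 2\nu_\textrm{B}$ back into \eqref{eq:lowsand} to get the super-polynomial relative error. The only difference is cosmetic: you get the limit of $2^{1/\alpha}\nu_{\textrm{B}5}$ from that of $M_\beta$, via $2^{1/\alpha}P \to e^{1/\beta}$ and $\psi_0(\beta+1) = \psi_0(\beta) + 1/\beta$, whereas the paper expands $\log\big(2^{1/\alpha}\nu_{\textrm{B}5}\big)$ directly and then repeats that computation with $\beta+1$ in place of $\beta$ for $M_\beta$.
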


\begin{proof}
Write $\nu_{\textrm{B}5} = 2^{-1/\alpha} G(\alpha)$, with
\[
\log G(\alpha) = \frac{\log\Gamma(1+\alpha) + \log\Gamma(\beta) - \log\Gamma(\alpha+\beta)}{\alpha} \, ,
\]
which has a finite limit as $\alpha \to 0^+$; see below.  So $\nu_{\textrm{B}5} = O(2^{-1/\alpha})$.
For $\alpha < \beta$, Lemma~\ref{lem:bracket} gives $\nu_\textrm{B} < \alpha/(\alpha+\beta) < \tfrac{1}{2}$, and
since $x/(1-x)$ is increasing,
\[
\log\frac{1}{1 - \nu_\textrm{B}} \le \frac{\nu_\textrm{B}}{1 - \nu_\textrm{B}} \le \min\left(\frac{\alpha}{\beta}, \, 2\nu_\textrm{B}\right) .
\]
With the first of these, \eqref{eq:lowsand} gives $|\log(\nu_\textrm{B}/\nu_{\textrm{B}5})| \le |\beta - 1|/\beta$, so
$\nu_\textrm{B} \le e^{|\beta-1|/\beta}\,\nu_{\textrm{B}5} = O(2^{-1/\alpha})$.  With the second, it then gives
$|\log(\nu_\textrm{B}/\nu_{\textrm{B}5})| \le 2|\beta - 1|\,\nu_\textrm{B}/\alpha = O(2^{-1/\alpha}/\alpha)$, which is
smaller than any power of $\alpha$.

Next, the numerator of $\log G$ vanishes at $\alpha = 0$, so Taylor's theorem gives
$\log G(\alpha) = \psi_0(1) - \psi_0(\beta) + \tfrac{1}{2} \left(\psi_1(1) - \psi_1(\beta)\right)\alpha + O(\alpha^2)$,
with $\psi_0(1) = -\gamma$ and $\psi_1(1) = \pi^2/6$.  Hence $2^{1/\alpha}\nu_{\textrm{B}5} = G(\alpha) \to e^{-\gamma-\psi_0(\beta)}$.
The same computation for $\log M_\beta$, with $\beta + 1$ in place of $\beta$, gives its limit.
\end{proof}

\begin{Lemma}[Shape of the log factor]
\label{lem:logconc}
For $\beta > 0$, $\ell(\alpha) = \log M_\beta(\alpha)$ satisfies $\ell'' < 0$ and $\ell''' > 0$ on $[0,\infty)$,
with
\begin{gather*}
\ell(0) = -\gamma - \psi_0(\beta+1) \, , \qquad
\ell'(0) = \frac{\pi^2}{12} - \frac{\psi_1(\beta+1)}{2} \, , \\
\ell''(0) = \frac{\psi_2(1) - \psi_2(\beta+1)}{3} = -\frac{2\zeta(3) - |\psi_2(\beta+1)|}{3} \, .
\end{gather*}
\end{Lemma}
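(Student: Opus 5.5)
Write $\ell(\alpha) = \varphi(\alpha)/\alpha$, where
\[
\varphi(\alpha) = \log\Gamma(1+\alpha) + \log\Gamma(\beta+1) - \log\Gamma(\alpha+\beta+1) \, ,
\]
so $\varphi(0)=0$.  The plan is the standard integral representation of a divided difference.  Since $\varphi(0)=0$,
\[
\ell(\alpha) = \int_0^1 \varphi'(\alpha u)\,du \, ,
\]
and hence $\ell^{(k)}(\alpha) = \int_0^1 u^k \varphi^{(k+1)}(\alpha u)\,du$.  This also extends $\ell$ smoothly to $\alpha = 0$.  So it suffices to show $\varphi''' < 0$ and $\varphi'''' > 0$ on $[0,\infty)$.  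The values at $0$ then follow directly:
\[
\ell^{(k)}(0) = \frac{\varphi^{(k+1)}(0)}{k+1} \, .
\]

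\emph{Derivatives of $\varphi$.}  We have $\varphi^{(j)}(\alpha) = \psi_{j-1}(1+\alpha) - \psi_{j-1}(1+\alpha+\beta)$ for $j \ge 1$.  This gives
\[
\ell(0) = \varphi'(0) = \psi_0(1) - \psi_0(\beta+1) = -\gamma - \psi_0(\beta+1) \, ,
\]
\[
\ell'(0) = \tfrac12\big(\psi_1(1) - \psi_1(\beta+1)\big) = \frac{\pi^2}{12} - \frac{\psi_1(\beta+1)}{2} \, ,
\]
\[
\ell''(0) = \tfrac13\big(\psi_2(1) - \psi_2(\beta+1)\big) \, .
\]
For the last one, use $\psi_2(1) = -2\zeta(3)$ and $\psi_2 < 0$, so $\psi_2(\beta+1) = -|\psi_2(\beta+1)|$.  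This gives the stated form $-(2\zeta(3) - |\psi_2(\beta+1)|)/3$.

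\emph{Signs.}  The signs come from the monotonicity of the polygamma functions.  By the series $\psi_n(z) = (-1)^{n+1} n! \sum_{k\ge0} (z+k)^{-n-1}$, the function $|\psi_n|$ is strictly decreasing in $z > 0$.  Moreover $\psi_2 < 0$ and $\psi_3 > 0$.
\begin{itemize}
\item Since $\psi_2$ is strictly increasing and $\beta > 0$, we get $\varphi''' = \psi_2(1+\alpha) - \psi_2(1+\alpha+\beta) < 0$.
\item Since $\psi_3$ is strictly decreasing, we get $\varphi'''' = \psi_3(1+\alpha) - \psi_3(1+\alpha+\beta) > 0$.
\end{itemize}
Inserting these into the integral representations for $k = 2, 3$ gives $\ell'' < 0$ and $\ell''' > 0$ on $[0,\infty)$.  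The integrands are strictly signed for $u \in (0,1]$.

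\emph{Main point of care.}  The only delicate step is justifying the representation and the differentiation under the integral, including at $\alpha = 0$.  This is routine because $\varphi$ is real-analytic on $(-1,\infty)$, so all derivatives are continuous and bounded on compact sets.  No step seems a real obstacle.  I would present it via the identity $\ell(\alpha) = \int_0^1 \varphi'(\alpha u)\,du$ to avoid manipulating quotient-rule expressions.
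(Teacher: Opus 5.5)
Your proof is correct and takes essentially the same approach as the paper. Your $\varphi'$ is the paper's $\phi(t)=\psi_0(1+t)-\psi_0(1+\beta+t)$. Like the paper, you write $\ell(\alpha)=\int_0^1\varphi'(\alpha u)\,du$, differentiate under the integral to get $\ell^{(k)}(\alpha)=\int_0^1 u^k\varphi^{(k+1)}(\alpha u)\,du$, read off the signs from $\psi_2$ being negative and increasing and $\psi_3$ being positive and decreasing, and evaluate the values at $0$ as $\ell^{(k)}(0)=\varphi^{(k+1)}(0)/(k+1)$.
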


\begin{proof}
With $\phi(t) = \psi_0(1+t) - \psi_0(1+\beta+t)$, we have $\ell(\alpha) = \frac{1}{\alpha} \int_0^\alpha \phi(t)\,dt
= \int_0^1 \phi(\alpha s)\,ds$, which is smooth on $[0,\infty)$, and
$\ell^{(n)}(\alpha) = \int_0^1 s^n \phi^{(n)}(\alpha s)\,ds$.  Here
$\phi''(t) = \psi_2(1+t) - \psi_2(1+\beta+t) < 0$ and $\phi'''(t) = \psi_3(1+t) - \psi_3(1+\beta+t) > 0$,
because $\psi_2$ is negative and increasing and $\psi_3$ is positive and decreasing.  The values at $0$ are
$\ell^{(n)}(0) = \phi^{(n)}(0)/(n+1)$, with $\psi_0(1) = -\gamma$, $\psi_1(1) = \pi^2/6$, and $\psi_2(1) = -2\zeta(3)$.
\end{proof}

Because $\ell''' > 0$, the second-order Taylor polynomial of $\ell$ is a lower bound:
\begin{equation}\label{eq:taylor2}
\ell(\alpha) \ge \ell(0) + \ell'(0)\,\alpha + \tfrac{1}{2} \ell''(0)\,\alpha^2 \quad \textrm{for} \quad \alpha \ge 0 \, ,
\end{equation}
since $\ell''(t) \ge \ell''(0)$ in the remainder $\int_0^\alpha (\alpha - t)\,\ell''(t)\,dt$.  Because $\ell'' < 0$, the
first-order polynomial is an upper bound.  With Theorem~\ref{thm:low}, and elementary one-sided bounds on
$\psi_0$, $\psi_1$, $\psi_2$, these give elementary bounds for the beta median.  The partial sums of the
asymptotic series of the polygamma functions alternate around them \citep{alzer1997inequalities}.  For $z > 0$,
we use
\begin{gather*}
\log z - \tfrac{1}{2z} - \tfrac{1}{12z^2} < \psi_0(z) < \log z - \tfrac{1}{2z} - \tfrac{1}{12z^2} + \tfrac{1}{120z^4} \, , \\
\tfrac{1}{z} + \tfrac{1}{2z^2} + \tfrac{1}{6z^3} - \tfrac{1}{30z^5} < \psi_1(z) \, , \\
\psi_1(z) < \tfrac{1}{z} + \tfrac{1}{2z^2} + \tfrac{1}{6z^3} - \tfrac{1}{30z^5} + \tfrac{1}{42z^7} \, , \\
|\psi_2(z)| > \tfrac{1}{z^2} + \tfrac{1}{z^3} + \tfrac{1}{2z^4} - \tfrac{1}{6z^6} \, .
\end{gather*}
We apply them at $w = \beta + 2$ and move to $z = \beta + 1$ with the recurrences
$\psi_0(z) = \psi_0(w) - 1/z$, $\psi_1(z) = \psi_1(w) + 1/z^2$, and $\psi_2(z) = \psi_2(w) - 2/z^3$; the
bounds are much tighter at $w$ than at $z$ when $\beta$ is small.  With $z = \beta + 1$ and $w = \beta + 2$,
define
\begin{align*}
\underline{c}_0(\beta) &= \frac{e^{-\gamma + 1/z}}{w}\, e^{\frac{1}{2w} + \frac{1}{12w^2} - \frac{1}{120w^4}} \, , \\
\underline{k}(\beta) &= \frac{\pi^2}{12} - \frac{1}{2z^2} - \frac{1}{2}\left(\frac{1}{w} + \frac{1}{2w^2} + \frac{1}{6w^3} - \frac{1}{30w^5} + \frac{1}{42w^7}\right) , \\
\eta(\beta) &= \frac{\zeta(3)}{3} - \frac{1}{6}\left(\frac{2}{z^3} + \frac{1}{w^2} + \frac{1}{w^3} + \frac{1}{2w^4} - \frac{1}{6w^6}\right) ,
\end{align*}
and
\[
\nu_{\textrm{B}6}(\alpha,\beta) = P(\alpha,\beta)\,\underline{c}_0(\beta)\,e^{\underline{k}(\beta)\,\alpha - \eta(\beta)\,\alpha^2} \, .
\]
For the upper bound let
\[
\overline{c}_0(\beta) = \frac{e^{-\gamma + 1/z}}{w}\,e^{\frac{1}{2w} + \frac{1}{12w^2}} \, , \qquad
\overline{k}(\beta) = \frac{\pi^2}{12} - \frac{1}{2z^2} - \frac{1}{2} \left(\frac{1}{w} + \frac{1}{2w^2} + \frac{1}{6w^3} - \frac{1}{30w^5}\right) ,
\]
and
\[
\nu_{\textrm{B}7}(\alpha,\beta) = P(\alpha,\beta)\,\overline{c}_0(\beta)\,e^{\overline{k}(\beta)\,\alpha} \, .
\]

\begin{Theorem}[Elementary low-$\alpha$ bounds]
\label{thm:lowbounds}
(a) For all $\alpha, \beta > 0$, $\nu_{\textrm{B}6}(\alpha,\beta) < \nu_{\textrm{B}5}(\alpha,\beta)$.  Hence
$\nu_{\textrm{B}6}(\alpha,\beta) < \nu_\textrm{B}(\alpha,\beta)$ for $\alpha > 0$, $\beta > 1$.

(b) For all $\alpha, \beta > 0$, $\nu_{\textrm{B}5}(\alpha,\beta) < \nu_{\textrm{B}7}(\alpha,\beta)$.  Hence
$\nu_\textrm{B}(\alpha,\beta) < \nu_{\textrm{B}7}(\alpha,\beta)$ for $\alpha > 0$, $0 < \beta < 1$.
\end{Theorem}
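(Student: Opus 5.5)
Since $\nu_{\textrm{B}6}$, $\nu_{\textrm{B}5}$ and $\nu_{\textrm{B}7}$ all carry the same factor $P(\alpha,\beta)$, it suffices to compare $\ell(\alpha)=\log M_\beta(\alpha)$ with the exponents $\log\underline{c}_0+\underline{k}\alpha-\eta\alpha^2$ and $\log\overline{c}_0+\overline{k}\alpha$. The "hence" clauses then follow at once from Theorem~\ref{thm:low}: $\nu_{\textrm{B}5}<\nu_\textrm{B}$ for $\beta>1$ and $\nu_{\textrm{B}5}>\nu_\textrm{B}$ for $\beta<1$.

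For (a), I start from the Taylor lower bound \eqref{eq:taylor2}, which comes from $\ell'''>0$ in Lemma~\ref{lem:logconc}. I then check coefficientwise that the elementary polynomial lies weakly below the Taylor polynomial, with strictness in the constant term. For the constant term, $\ell(0)=-\gamma-\psi_0(\beta+1)=-\gamma-\psi_0(w)+1/z$. The upper bound $\psi_0(w)<\log w-\tfrac{1}{2w}-\tfrac{1}{12w^2}+\tfrac{1}{120w^4}$ gives $\ell(0)>\log\underline{c}_0$ strictly. For the linear term, $\ell'(0)=\pi^2/12-\tfrac12\psi_1(z)=\pi^2/12-\tfrac{1}{2z^2}-\tfrac12\psi_1(w)$. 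The upper bound on $\psi_1(w)$, including the $\tfrac{1}{42w^7}$ term, gives $\ell'(0)>\underline{k}$; since $\alpha>0$, this multiplies safely. For the quadratic term, $\tfrac12\ell''(0)=-(2\zeta(3)-|\psi_2(z)|)/6$ with $|\psi_2(z)|=|\psi_2(w)|+2/z^3$. The lower bound on $|\psi_2(w)|$ gives $\tfrac12\ell''(0)>-\eta$, hence $-\eta\alpha^2<\tfrac12\ell''(0)\alpha^2$. Summing these three comparisons gives $\log\nu_{\textrm{B}6}<\log\nu_{\textrm{B}5}$ for all $\alpha>0$.

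For (b), $\ell''<0$ on $[0,\infty)$ makes $\ell$ concave, so $\ell(\alpha)\le\ell(0)+\ell'(0)\alpha$, with strict inequality for $\alpha>0$. Next I use the lower bound $\psi_0(w)>\log w-\tfrac{1}{2w}-\tfrac{1}{12w^2}$, which gives $\ell(0)<\log\overline{c}_0$. Then the lower bound $\psi_1(w)>\tfrac1w+\tfrac{1}{2w^2}+\tfrac{1}{6w^3}-\tfrac{1}{30w^5}$ gives $\ell'(0)<\overline{k}$. Together these yield $\ell(\alpha)<\log\overline{c}_0+\overline{k}\alpha$.

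The main obstacle is justifying the cited polygamma inequalities with the correct strictness and direction for every $z>0$, not just asymptotically. I would take them from the complete monotonicity results of \citet{alzer1997inequalities}: the remainders of the asymptotic series of $\psi_n$ alternate in sign. The remaining care is bookkeeping: the recurrence shifts from $z$ to $w$, where the signs of $\psi_1$ versus $\psi_2$ matter, and the fact that $|\psi_2|$ enters $\eta$ with a negative sign, so that a lower bound on $|\psi_2|$ is what raises $-\eta$ appropriately.
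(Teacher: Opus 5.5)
Your proposal is correct and follows the paper's proof essentially step for step. Like the paper, you combine the second-order Taylor lower bound (from $\ell'''>0$) and the first-order concavity upper bound (from $\ell''<0$) of Lemma~\ref{lem:logconc} with coefficientwise comparisons via the one-sided polygamma bounds of \citet{alzer1997inequalities} at $w=\beta+2$, shifted back to $z=\beta+1$ by the recurrences, and then invoke Theorem~\ref{thm:low} for the ``hence'' clauses, with the same sign bookkeeping for $\log\underline{c}_0$, $\underline{k}$, $\eta$, $\log\overline{c}_0$, and $\overline{k}$.
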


\begin{proof}
(a) The upper bounds on $\psi_0(w)$ and $\psi_1(w)$, with the recurrences, give $\ell(0) > \log \underline{c}_0$ and $\ell'(0) > \underline{k}$, and the lower bound on
$|\psi_2(w)|$, with $|\psi_2(z)| = |\psi_2(w)| + 2/z^3$, gives $-\tfrac{1}{2} \ell''(0) < \eta$.  By \eqref{eq:taylor2},
$\ell(\alpha) > \log \underline{c}_0 + \underline{k}\alpha - \eta\alpha^2$, so $\nu_{\textrm{B}6} < Pe^{\ell} = \nu_{\textrm{B}5}$, and
$\nu_{\textrm{B}5} < \nu_\textrm{B}$ for $\beta > 1$ by Theorem~\ref{thm:low}.
(b) By concavity, $\ell(\alpha) \le \ell(0) + \ell'(0)\,\alpha$, and the lower bounds on $\psi_0(w)$ and $\psi_1(w)$ give
$\ell(0) < \log\overline{c}_0$ and $\ell'(0) < \overline{k}$.  So $\nu_{\textrm{B}5} < \nu_{\textrm{B}7}$, and
$\nu_\textrm{B} < \nu_{\textrm{B}5}$ for $\beta < 1$.
\end{proof}

\emph{Notes and evidence.}  The coefficient $\eta(\beta)$ rises from about $0.333$ at $\beta = 1$ to
$\zeta(3)/3 \approx 0.4007$ as $\beta \to \infty$.  On logarithmic grids over $0.002 \le \alpha \le 0.995$
and $1.005 \le \beta \le 400$, the relative error of $\nu_{\textrm{B}6}$ ranges from about $0$ (as
$\alpha \to 0$) to $-38.0\%$.  The worst case is at $\alpha \to 1$ with large $\beta$, where
$\nu_{\textrm{B}5}$ itself is $28\%$ low, since $\nu_{\textrm{B}5}(1,\beta) = 1/(2\beta)$ while
$\nu_\textrm{B}(1,\beta) \approx \log 2/\beta$.

The two bounds differ only slightly in their coefficients:  $\overline{c}_0 = \underline{c}_0\,e^{1/(120w^4)}$ and
$\overline{k} = \underline{k} + 1/(84w^7)$.  But $\nu_{\textrm{B}7}$ has no $\alpha^2$ term.  Since $\ell''' > 0$, a
second-order Taylor polynomial bounds $\ell$ from below, so only the first-order one gives an upper bound.

For $\alpha < \beta < 1$, $\nu_{\textrm{B}7}$ is poor near the diagonal when $\beta$ is small, because
on the diagonal $\nu_{\textrm{B}5}$ tends to~$1$ as $\beta \to 0$.  An upper-bound form that equals $\tfrac{1}{2}$ on the
diagonal must lie below $\nu_{\textrm{B}5}$ there, so it cannot be obtained by this comparison; the
odds-corrected bounds of Section~\ref{sec:odds} address this, with numerical rather than proved bounding.

\subsection{Formal verification in Lean}
\label{app:lean}

Most results of this appendix were formalized in the Lean~4 proof assistant \citep{lean4}, using
its mathematics library Mathlib \citep{mathlib}, and checked by machine;
the exceptions are listed at the end of this section.  The project, in the folder \texttt{lean} of the
supplementary material, has 29 source files and about 5{,}850 lines.  It builds with Lean
\texttt{v4.35.0-rc3} and the matching Mathlib release.  All results are in the namespace
\texttt{MedianBounds}.

\emph{What is being checked.}  The statements are about Mathlib's own probability distributions:
\texttt{gammaMeasure $\alpha$ 1} and \texttt{betaMeasure $\alpha$ $\beta$}, with CDFs \texttt{cdf}.  A median
appears either as a hypothesis \texttt{cdf $\mu$ m = 1/2} or as a function (\texttt{nuGamma},
\texttt{nuB}) defined as the inverse of the CDF at $\tfrac{1}{2}$.  Existence and uniqueness of the medians
are themselves proved, so the hypotheses are never vacuous.  For each main result, the build prints the
axioms it depends on (\texttt{\#print axioms}).  All but one depend only on Lean's three standard
axioms (\texttt{propext}, \texttt{Classical.choice}, \texttt{Quot.sound}), which means no step is
assumed or skipped.

\emph{Correspondence.}  Table~\ref{tab:lean} lists the Lean name of each result.

\begin{table}[!ht]
\centering
\caption{Lean names of the formalized results.}
\label{tab:lean}
\footnotesize
\begin{tabular}{@{}ll@{}}
\toprule
Result & Lean name \\
\midrule
Lemma~\ref{lem:basic} & \texttt{thm\_nuB\_symm}, \texttt{thm\_nuB\_equal} \\
Theorem~\ref{thm:asymptote} & \texttt{thm\_nuB0\_asymptote} \\
Theorem~\ref{thm:lower_bound} & \texttt{thm\_nuB0\_bound\_le}, \texttt{thm\_nuB0\_bound} \\
Lemmas~\ref{lem:logmean}, \ref{lem:Lambda} & \texttt{lem\_logmean}, \texttt{lem\_Lambda} \\
Theorem~\ref{thm:gamma} & \texttt{thm\_nuGamma4\_upper}, \texttt{thm\_nuGamma4\_lower} \\
Lemma~\ref{lem:special} & \texttt{lem\_special} \\
Theorem~\ref{thm:non} & \texttt{thm\_no\_g} \\
Theorem~\ref{thm:kerman}$^\dagger$ & \texttt{thm\_nuB1\_bound}, \texttt{lem\_kerman\_ratio} \\
Theorem~\ref{thm:wh_edge} & \texttt{thm\_nuB2\_bound\_one}, \texttt{thm\_nuB2\_edge\_one} \\
Conjecture~\ref{conj:wh}$^\S$ & \texttt{NuB2Conjecture} \\
Lemma~\ref{lem:whmargin}$^\P$ & \texttt{lem\_wh\_margin} \\
Conjecture~\ref{conj:largeR}$^\S$ & \texttt{UniformRemainderConjecture} \\
Conj.~\ref{conj:largeR} $\Rightarrow$ Conj.~\ref{conj:wh} & \texttt{nuB2Conjecture\_of\_uniformRemainder} \\
Lemma~\ref{lem:triangle} & \texttt{nuB4\_conjecture\_of\_triangleAtOne} \\
 & \texttt{triangleAtOne\_of\_nuB4\_conjecture} \\
Lemma~\ref{lem:B4partial} & \texttt{lem\_nuB4\_lt\_nuB\_of\_rho} \\
Theorem~\ref{thm:B4large}$^\dagger$ & \texttt{thm\_nuB4\_large}, \texttt{thm\_nuB4\_bound\_large} \\
Lemma~\ref{lem:bracket} & \texttt{lem\_nuB\_lt\_mean}, \texttt{lem\_nuB\_gt\_digamma} \\
Conjecture~\ref{thm:B4bound}$^*$ & \texttt{thm\_nuB4\_bound} \\
Theorem~\ref{thm:low} & \texttt{thm\_nuB5\_bound} \\
Lemma~\ref{lem:logconc}$^\|$ & \texttt{lem\_logconc\_taylor} \\
Theorem~\ref{thm:lowasym} & \texttt{thm\_lowasym\_ratio}, \texttt{thm\_lowasym\_median}, \\
 & \texttt{thm\_lowasym\_M} \\
Theorem~\ref{thm:lowbounds} & \texttt{thm\_lowbounds\_a}, \texttt{thm\_lowbounds\_b}, \\
 & \texttt{thm\_nuB6\_bound}, \texttt{thm\_nuB7\_bound} \\
\bottomrule
\end{tabular}

\smallskip
\raggedright
$^\dagger$ with two facts from \citet{berg2006chen} as a hypothesis;
$^*$ conditional on one unproved inequality;
$^\S$ stated only;
$^\P$ in the weaker form $E > \Delta/(72\alpha^2)$;
$^\|$ in part (see below).
\end{table}

\emph{How the proofs were adapted.}  The Lean proofs follow the ones above, with a few changes of
route.
\begin{itemize}
\item Theorem~\ref{thm:lower_bound} is proved for any CDF on $(0,\infty)$ with density
$c\,y^{\beta-1}e^{-y}$, packaged as a hypothesis structure, and then applied to Mathlib's gamma CDF\@.
The identity $\mathbb{P}(X_\alpha \le sX_\beta) = I_{s/(1+s)}(\alpha,\beta)$, the link to Mathlib's
beta distribution, is proved by an explicit change of variables and Fubini's theorem.
\item The high-$\beta$ asymptote uses Chernoff tail bounds instead of Slutsky's theorem, as described
after its proof.
\item The logit bracket works in the variable $q = e^t$, in which the pair $c \pm t$ becomes
$x_1 = Mq/(1+Mq)$ and $x_2 = M/(M+q)$ in the beta variable, with $M = e^c$.  The sign of the derivative
of $F(x_1) + F(x_2)$ is then that of $(\alpha-\beta)\log q + (\alpha+\beta)[\log(M+q) - \log(1+Mq)]$.
\item Theorem~\ref{thm:wh_edge} checks the bounds $P(1.1) > 0 > P(1.2)$, $R(2) < 4$, and
$R(5/2)^2 > 32$ in exact rational arithmetic.
\item Lemma~\ref{lem:whmargin} is proved in the weaker form $E > \Delta/(72\alpha^2)$, which is all
the implication from Conjecture~\ref{conj:largeR} to Conjecture~\ref{conj:wh} needs.  Instead of the
power series it uses that $3\log(1 - t/9) + t/3 + t^2/54 + t^3/648$ is nondecreasing on $[0,1]$, since
its derivative is $t^2(1-t)/(1944(1 - t/9))$.
\item Lemma~\ref{lem:Lambda} is formalized with an earlier, longer proof of the concavity of $\Lambda$:
$(N/M)'=H/M^2$ with $H(d)=\sum_{k\ge4}c_k\,d^{2k+1}/(2k+1)!$ and $c_k=16k^3-24k^2+8k+16-4^{k+1}<0$.
The symmetry proof given above was found later.
\item Theorem~\ref{thm:B4large} is formalized with its corollary, the conjecture's sign pattern when both
shape parameters are at least $x_1$ (\texttt{thm\_nuB4\_bound\_large}).  Its first inequality,
\texttt{nuB4\_lt\_nuB1\_of\_large}, needs no hypothesis; the cubic lower bound on $e^{-t}$ comes from
Mathlib's error bound for the five-term Taylor polynomial of the exponential.  The second inequality is
\texttt{thm\_nuB1\_bound}, so the theorem carries the same \citet{berg2006chen} hypothesis as
Theorem~\ref{thm:kerman}.
\item In Lemma~\ref{lem:bracket}, the identity $\mathbb{E}W = \psi_0(\alpha) - \psi_0(\beta)$ is proved by
differentiating $B(s,\beta) = \int_0^1 x^{s-1}(1-x)^{\beta-1}\,dx$ under the integral sign, which gives
$\mathbb{E}\log X = \psi_0(\alpha) - \psi_0(\alpha+\beta)$, and reflecting $x \mapsto 1-x$ for
$\mathbb{E}\log(1-X)$.  That $\psi_0$ is the derivative of $\log\Gamma$ is proved by the convexity
argument that Mathlib uses for $\Gamma'$ at the positive integers.  The integral over $t$ is the
layer-cake formula, applied to the positive and negative parts of $\mu - W$.
\item For the low-$\alpha$ bounds, the polygamma functions are defined by their series,
$\psi_0(x) = -\gamma + \sum_{m\ge0}\left(\frac{1}{m+1} - \frac{1}{m+x}\right)$,
$\psi_1(x) = \sum_{j\ge0}(x+j)^{-2}$, and $\psi_2(x) = -2\sum_{j\ge0}(x+j)^{-3}$, and the proof of
Theorem~\ref{thm:lowbounds} does not differentiate $\log\Gamma$.  Euler's limit formula for $\Gamma$ gives
$\log M_\beta(\alpha)^\alpha = \sum_{m\ge1} f_m(\alpha)$ with
$f_m(\alpha) = \log(m+\beta+\alpha) - \log(m+\alpha) - \log(m+\beta) + \log m$.  Each $f_m$ has a positive
fourth derivative, so it lies above its cubic and below its quadratic Taylor polynomial at $\alpha = 0$;
summing gives the two Taylor bounds on $\ell$ that the proof uses.  Each polygamma bound is proved by
telescoping:  if $R(x+n) \to 0$ and $t(y) < R(y) - R(y+1)$ for all $y \ge x$, then
$\sum_{j\ge0} t(x+j) < R(x)$.  The step inequalities reduce to rational functions whose numerators have
positive coefficients; for $\psi_0$, $\log(1+1/y)$ is bracketed by its series in $1/(2y+1)$.  The bounds
are proved where they are used:  for $\psi_0$ at $w = \beta+2 > 2$, and for $\psi_1$ and $\psi_2$ at
arguments $x \ge 1$.
\end{itemize}

\emph{What is assumed or left open.}
\begin{itemize}
\item Theorem~\ref{thm:kerman} takes as an explicit hypothesis the two facts from
\citet{berg2006chen} used in its proof:  that $\nu_\Gamma(\alpha) - \alpha$ is nonincreasing, and that
$\nu_\Gamma(\alpha) > \alpha - \tfrac{1}{3}$.  These published results are not formalized.  Because they enter
as a hypothesis rather than an axiom, the theorem's axiom list stays standard.
\item Conjecture~\ref{thm:B4bound} is stated (\texttt{thm\_nuB4\_bound}) and derived from a single
unproved inequality, the strong form of the triangle inequality of Lemma~\ref{lem:triangle}, with $m$
written \texttt{mr}:
{\small
\begin{alltt}
theorem conj_median_ratio_triangle :
    \(\forall\) x y z : \(\mathbb{R}\),
      0 < x \(\to\) x < y \(\to\) y < z \(\to\)
        mr x z < mr x y * mr y z := by
  sorry
\end{alltt}}
This is the only \texttt{sorry} in the project.  It is the one result whose axiom list includes
\texttt{sorryAx}, which makes the dependence explicit.
\item Of Lemma~\ref{lem:logconc}, the two Taylor bounds that follow from it are formalized, rather than the signs
of $\ell''$ and $\ell'''$.  The series for $\psi_0$ is proved to be the derivative of $\log\Gamma$;
that $\psi_1$ and $\psi_2$ are its derivatives is not formalized.
\item Theorem~\ref{thm:largeexp}, and with it Theorems~\ref{thm:whray} and~\ref{thm:cfamily}, is not
formalized:  it rests on Laplace's method, whose formalization would be a substantial project.
\item Conjectures~\ref{conj:wh} and~\ref{conj:largeR} are stated but are neither assumed nor left
as a \texttt{sorry}; their Lean names are \texttt{NuB2Conjecture} and
\texttt{UniformRemainderConjecture}.  The implication
from Conjecture~\ref{conj:largeR} to Conjecture~\ref{conj:wh} is proved.  The crossing argument after
Conjecture~\ref{conj:wh}, which uses continuity of $\nu_\textrm{B}$ in $\alpha$, and the numerical
observations are not formalized.
\end{itemize}

\emph{Reproducing the check.}  With Lean's package manager installed, run
\texttt{lake exe cache get} to download the prebuilt Mathlib, then \texttt{lake build} in the project
directory.  The build ends by printing the axiom list of each main result.

\end{appendix}

\begin{acks}[Acknowledgments]
The author thanks his many Google colleagues, but especially Matt Harvey, who helped him
see how AI tools could help with the proofs.  The author also acknowledges ChatGPT (OpenAI),
Gemini (Google), and Claude (Anthropic) for help with proofs, code, and drafting; their use is
described in Section~\ref{sec:methods} and at the start of the appendix.
\end{acks}

\begin{supplement}
\stitle{Lean~4 formalization}
\sdescription{The Lean~4 project, with its 29 source files, \texttt{lakefile.toml}, toolchain pin, and \texttt{README.md}, which machine-checks the theorems and
lemmas of Appendix~\ref{app:main} with the exceptions listed in Section~\ref{app:lean}.  Build it with
\texttt{lake exe cache get} and \texttt{lake build}.}
\end{supplement}
\begin{supplement}
\stitle{Python code}
\sdescription{A Python notebook, with its two modules, that regenerates every figure and checks the
paper's numerical claims:  the error ranges of $\nu_{\textrm{B}3}$, $\nu_{\Gamma 3}$, the low-$\alpha$
bounds $\nu_{\textrm{B}6}$ to $\nu_{\textrm{B}9}$, $\nu_{1/30}$, and $\nu_{\textrm{B}10}$, as well as the values of
$c$ quoted in Section~\ref{sec:asymexp}.  A third module, \texttt{checks\_mp.py}, re-checks the error ranges
of $\nu_{\textrm{B}8}$, $\nu_{\textrm{B}9}$, $\nu_{1/30}$, and $\nu_{\textrm{B}10}$ with 30-digit medians
computed with mpmath, also at random off-grid points, follows the error of $\nu_{\textrm{B}10}$ below
$\beta = 0.001$, and
reproduces the numerical evidence for Conjectures~\ref{conj:wh},
\ref{conj:largeR}, and~\ref{thm:B4bound} and for the triangle inequality of Section~\ref{app:nuB4}.  A
fourth, \texttt{exact\_medians.py}, reproduces the survey of exact beta medians in
Section~\ref{app:ratio}.}
\end{supplement}

\bibliographystyle{imsart-nameyear}
\bibliography{lyon-beta-references}

\end{document}